\title{Part I, Free Actions of Compact \\ Abelian Groups on C\Star Algebras}
\author{Kay Schwieger and Stefan Wagner}
\date{}
	\newlist{equivalence}{enumerate}{1}
	\setlist[equivalence]{label=(\alph*)}
\theoremstyle{plain}
	\newtheorem{thm}{Theorem}[section]
	\newtheorem{prop}[thm]{Proposition}
	\newtheorem{lemma}[thm]{Lemma}
	\newtheorem{cor}[thm]{Corollary}
\theoremstyle{definition}
	\newtheorem{defn}[thm]{Definition}
	\newtheorem{rmk}[thm]{Remark}
	\newtheorem{expl}[thm]{Example}
\theoremstyle{remark}
\newcommand*{\one}{\mathbbm 1}		
\newcommand*{\tensor}{\otimes}		
\DeclarePairedDelimiter{\scal}{\langle}{\rangle}	
\DeclarePairedDelimiter{\norm}{\lVert}{\rVert} 	
\DeclareMathOperator{\id}{id}		
\DeclareMathOperator{\Aut}{Aut}
\DeclareMathOperator{\Inn}{Inn}
\DeclareMathOperator{\M}{M}
\DeclareMathOperator{\Hom}{Hom}
\DeclareMathOperator{\Homeo}{Homeo}
\DeclareMathOperator{\Out}{Out}
\DeclareMathOperator{\SU}{SU}
\DeclareMathOperator{\pr}{pr}
\DeclareMathOperator{\Alt}{Alt}
\DeclareMathOperator{\Ext}{Ext}
\DeclareMathOperator{\tr}{tr}
\DeclareMathOperator{\Span}{span}
\DeclareMathOperator{\Pic}{Pic}
\newcommand{\cf}{\mbox{cf.}\xspace}			
\newcommand{\eg}{\mbox{e.\,g.}\xspace}			
\newcommand*{\ie}{\mbox{i.\,e.}\xspace}			
\newcommand*{\ndash}{\nobreakdash-}
\newcommand*{\alg}{\mathcal}		
\newcommand*{\End}{\mathcal L}		
\newcommand*{\Star}{$^*$\ndash}
\newcommand*{\aA}{\alg A}		
\newcommand*{\aB}{\alg B}
\newcommand*{\aC}{\alg C}
\DeclareMathOperator{\Ad}{Ad}		
\begin{document}

\author{
	Kay Schwieger \thanks{
		University of Helsinki, 
		\href{mailto:kay.schwieger@gmail.com}{\nolinkurl{kay.schwieger@gmail.com}}
	} \and 
	Stefan Wagner \thanks{
		Blekinge Tekniska H\"ogskola,
		\href{mailto:stefan.wagner@bth.se}{\nolinkurl{stefan.wagner@bth.se}}
	}
}
\maketitle

\begin{abstract}
	\noindent
	We study free actions of compact groups on unital C\Star algebras. In particular, we provide a complete classification theory of these actions for compact Abelian groups and explain its relation to the classification of classical principal bundles.

	\vspace*{0,5cm}

	\noindent
	Keywords: Free actions on C\Star algebras, noncommutative principal bundles. 

	\noindent
	MSC2010: 46L85, 37B05 (primary), 55R10, 16D70 (secondary).
\end{abstract}

\section{Introduction}

In this article we study free actions of compact groups on unital C\Star algebras. This class of actions was first introduced under the name \emph{saturated actions} by Rieffel \cite{Ri88} and equivalent characterizations where given by Ellwood \cite{Ellwood00} and by Gottman, Lazar, and Peligrad \cite{GoLaPe94} (see also \cite{DKY12,Wa10}). Other related notions of freeness were studied by Phillips \cite{Ph87} in connection with K-theory (see also \cite{Ph09}).

Free actions do not admit degeneracies that may be present in general actions. For this reasons they are easier to understand and to classify.  In fact, for compact Abelian groups, free ergodic actions, \ie, free actions with trivial fixed point algebra, were completely classified by Olesen, Pedersen and Takesaki in \cite{OlPeTa80} and independently by Albeverio and H\o egh--Krohn in \cite{AlHo80}. This classification was generalized to compact non-Abelian groups by the remarkable work of Wassermann \cite{Wass88a, Wass88b, Wass89}. According to \cite{AlHo80,OlPeTa80,Wass88a}, for a compact group $G$ there is a 1-to-1 correspondence between free ergodic actions of $G$ and 2-cocycles of the dual group. An analogous result in the context of compact quantum groups has been obtained by Bichon, De Rijdt and Vaes \cite{BiRiVa06}. Extending these results beyond the ergodic case is however not straightforward because, even for a commutative fixed point algebra, the action cannot necessarily be decomposed into a bundle of ergodic actions. For compact Abelian groups our results about free, but not necessary ergodic actions may be regarded as a generalization of the classification given in \cite{AlHo80,OlPeTa80}. We would also like to point out that Neshveyev \cite{Ne13} obtained a classification of actions of compact quantum groups in terms of weak unitary tensor functors, which do unfortunatley not have an obvious homological interpretation. 

The study of non-ergodic free actions is also motivated by the established theory of principal bundles. In fact, by a classical result, having a free action of a compact group $G$ on a compact $P$ is equivalent saying that $P$ carries the structure of a principal bundle over the quotient $X := P/G$ with structure group~$G$. Very well-understood is the case of locally trivial principal bundles, that is, if $P$ is glued together from spaces of the form $U \times G$ with an open subset $U \subseteq X$. This gluing immediatly leads to $G$-valued cocycles. The corresponding cohomology theory, called \v{C}ech cohomology, gives a complete classification of locally trivial principal bundles with base space $X$ and structure group~$G$ (see \cite{To00}). For principal bundles that are not locally trivial, however, there is no obvious classification available. Our results provide such a classification in the case of a compact Abelian structure group.

Passing to noncommutative geometry poses the question how to extend the concept of principal bundles to noncommutative geometry.  In the case of vector  bundles the Theorem of Serre and Swan (\cf \cite{Swa62}) gives the essential clue: The category of vector bundles over a compact space $X$ is equivalent to the category of finitely generated and projective $C(X)$-modules. This observation leads to a notion of noncommutative vector bundles and is the connection between the topological K-theory based on vector bundles and the K-theory for C\Star algebras. For principal bundles, free and proper actions offer a good candidate for a notion of noncommutative principal bundles (see \eg  \cite{BHS07, BDH13, Ellwood00, Ph09}). A~similar geometric approach based on transformation groups was developed by one of the authors \cite{Wa11d, Wa11e}. In a purely algebraic setting, the well-established theory of Hopf--Galois extensions provides a wider framework comprising coactions of Hopf algebras (\eg \cite{Haj04,LaSu05,Sch04}). We also would like to mention the related notion of noncommutative principal torus bundles proposed by Echterhoff, Nest, and Oyono-Oyono \cite{ENOO09} (see also \cite{HaMa10}), which relies on a noncommutative version of Green's Theorem. Considering free actions as noncommutative principal bundles, the present article characterizes principal bundles in terms of associated vector bundles. 


Extending the classical theory of principal bundles to noncommutative geometry is not of purely mathematical interest.  In fact, noncommutative principal bundles become more and more prevalent in geometry and physics. For instance, Ammann and B\"ar \cite{Amm, AmmB} study the properties of the Riemannian spin geometry of a smooth principal $U(1)$\ndash bundle. Under suitable hypotheses, they relate the spin structure and the Dirac operator on the total space to the spin structure and the Dirac operator on the base space.
A~noncommutative generalization of these results was developed by Dabrowski, Sitarz, and Zucca in \cite{DaSi,DaSiZu} using spectral triples and the Hopf--Galois analogue of principal $U(1)$-bundles. Noncommutative principal bundles also appear in the study of 3-dimensional topological quantum field theories that are based on the modular tensor category of representations of the Drinfeld double (\cf \cite{LePr15}). In this context, special types of Hopf--Galois extensions correspond to symmetries of the theory or, equivalently, to invertible defects. As such, they are connected to module categories and, in particular, to the Brauer--Picard group of pointed fusion categories. Furthermore, T-duality is considered to be an important symmetry of string theories (\cite{AAGBL94, Bu87}). It is known that a circle bundle with H-flux given by a Neveu--Schwarz 3\ndash form admits a T-dual circle bundle with dual H-flux. However, it is also known that in general torus bundles with H-flux do not necessarily have a T-dual that is itself a classical torus bundle. Mathai and Rosenberg showed in \cite{MaRo05, MaRo06} that this problem is resolved by passing to noncommutative spaces. For example, it turns out that every principal $\mathbb{T}^2$\ndash bundle with H-flux does indeed admit a T-dual but its T-dual is non-classical. It is a bundle of noncommutative 2-tori, which can (locally) be realized as a noncommutative principal $\mathbb{T}^2$-bundle in the sense of \cite{ENOO09}. All these examples demand a better understanding of the geometry of noncommutative principal bundles. Although the classification in this article relies purely on the topology of the bundle, we hope that parts of our classification extends in such a way that additional geometrical information of the space is comprised. 

In the present article we investigate the structure of free actions on C\Star algebras, which is one framework for noncommutative principal bundles. We restrict ourselves to the compact setting, that is, to compact groups acting on unital C\Star algebras. The main objective of this article is to provide a complete classification of free actions of compact Abelian groups on unital C\Star algebras. We achieve such a classification by inspecting the Morita equivalence bimodule structure of the isotypic components. It turns out that the resulting classification can be handled by methods of group cohomology. More detailedly, the paper is organized as follows.

After some preliminaries, we discuss the different equivalent characterizations of freeness in the first part of Section~\ref{sec:general_th} (Theorem~\ref{equcondsatact}). In the second part of Section~\ref{sec:general_th} we further provide some methods to construct new free actions from given ones. As an example we present a one-parameter family of free $\SU(2)$-actions which is related to the Connes-Landi spheres (\cf~\cite{LaSu05}).


Section~\ref{prelud} is the first essential step in our classification. We construct a first invariant of a C\Star dynamical system, namely a group homomorphism $\varphi:\widehat{G} \to \Pic(\aB)$ from the dual of the compact Abelian group $G$ to the Picard group of the unital fixed point algebra~$\aB$. In general, this invariant neither distinguishes all free actions of $G$ nor is there a dynamical system for each group homomorphism $\varphi$. However, refining this invariant leads to our classifying data, to which we refer as \emph{factor system} due to some similarity with the theory of group extensions. As one cornerstone of our classification, we show that every factor system can indeed be obtained from a C\Star dynamical system. The construction of this dynamical system forms the main part of Section~\ref{prelud}.

The full classification of free actions of compact Abelian groups on unital C\Star algebras is discussed in Section~\ref{sec:classification}. For this purpose we additionally fix a group homomorphism $\varphi: \widehat{G} \to \Pic(\aB)$ and restrict our attention to C\Star dynamical system with the given $\varphi$ as invariant. The main result is that, if such dynamical systems exist, all free actions associated to the triple $(\aB, G, \varphi)$ are parametrized, up to 2-coboundaries, by 2-cocycles on the dual group $\widehat{G}$ with values in the group $UZ(\aB)$ of central unitary elements in~$\aB$ (Theorem~\ref{sat and cocycles} and Corollary~\ref{sat and cocycles cor}). In other words, the set in question is a principal homogenous space with respect to a classical cohomology group $H^2(\widehat{G},UZ(\aB))$. In the remaining part of this section we provide a group theoretic criterion for the existence of free actions with invariant $\varphi$; that is, factor systems associated to the triple~$(\aB,G,\varphi)$.

As already mentioned, for a compact group $G$ and a compact space $X$, locally trivial principal $G$-bundles over $X$ are classified by the \v Cech cohomology for the pair $(X,G)$. From the C\Star algebraic viewpoint, local triviality is not easy to capture. Section~\ref{sec:principal_bdl} provides instead a classification of not necessarily locally trivial principal bundles in case of a compact Abelian structure group. 
Finally, in Section~\ref{sec:examples} we discuss a few examples.

We would like to point out that with little effort the arguments and the results which are presented in this article for actions of compact Abelian groups extend to coactions of group C\Star algebras of finite groups. We also would like to mention that this article is the first part of a larger program aiming at classifying more general free actions on C\Star algebras (\cf \cite{SchWa15,SchWa16a}). This classification may be used to develop a fundamental group for C\Star algebras (\cf \cite{SchWa16b}). It could also serve as a starting point for an approach to quantum gerbes and a theory of T-duals for noncommutative principal torus bundles. 


\section{Preliminaries and Notations}
\label{sec:prel}

Our study is concerned with free actions of compact groups on unital C\Star algebras and their classification. As a consequence, we use and blend tools from geometry, representation theory and operator algebras. In this preliminary section we provide the most important definitions and notations which are repeatedly used in this article. 


\subsubsection*{Principal Bundles and Free Group Actions}

Let $P$ and $X$ be compact spaces. Furthermore, let $G$ be a compact group. A~locally trivial principal bundle is a quintuple $(P,X,G,q,\sigma)$, where $q:P\rightarrow X$ is a continuous map and $\sigma:P\times G\rightarrow P$ a continuous action, with the property of local triviality: Each point $x\in X$ has an open neighbourhood $U$ for which there exists a homeomorphism $\varphi_U:U\times G\rightarrow q^{-1}(U)$ satisfying $q\circ\varphi_U=\pr_U$ and additionally the equivariance property 
\begin{align*}
\varphi_U(x,gh)=\varphi_U(x,g).h
\end{align*}
for $x\in U$ and $g,h\in G$. It follows that the map $q$ is surjective, that the action $\sigma$ is free and proper, and that the natural map $P/G\mapsto X$, $p.G\mapsto q(p)$ is a homeomorphism. In particular, we recall that the action $\sigma$ is called free if and only if all stabilizer groups $G_p:=\{g\in G\;|\;\sigma(p,g)=p\}$, $p\in P$, are trivial. For a solid background on free group actions and principal bundles we refer to \cite{Hu75,KoNo63,Ne08b}.

\subsubsection*{C\Star Dynamical Systems}

Tensor products of C\Star algebras are taken with respect to the minimal tensor product denoted by $\tensor$. Let $\aA$ be a unital C\Star algebra and $G$ a compact group that acts on $\aA$ by \Star automorphism $\alpha_g:\aA \to \aA$ ($g \in G$) such that the map $G \times \aA \to \aA$, $(g,a) \mapsto \alpha_g(a)$ is continuous. Throughout this paper we call such a triple $(\aA,G,\alpha)$ a C\Star dynamical system. We sometimes denote by $\aB := \aA^G$ the corresponding fixed point C\Star algebra of the action $\alpha$ and we write $P_0:\aA \to \aA$ for the conditional expectation given by
\begin{equation*}
	P_0(x) := \int_G \alpha_g(x) \; dg.
\end{equation*}
At this point it is worth mentioning that all integrals over compact groups are understood to be with respect to probability Haar measure.
More generally, for an irreducible representation $(\pi,V)$ of $G$ we write $P_\pi:\aA \to \aA$ for the continuous $G$-equivariant projection onto the isotypic component $A(\pi):=P_\pi(\aA)$ given by
\begin{equation*}
	P_\pi(x) := \dim V \cdot \int_G \tr_V(\pi_g^*) \cdot \alpha_g(x) \;dg, 
\end{equation*}
where $\tr_V$ denotes the canonical trace on the algebra $\End(V)$ of linear endomorphisms of~$V$. It is a consequence of the Peter--Weyl Theorem \cite[Theorem 3.51 and Theorem 4.22]{HoMo06} that the algebraic direct sum $\bigoplus_{\pi\in\widehat{G}}A(\pi)$ is a dense $^*$-subalgebra of $\aA$. Here we write $\widehat{G}$ for the set of equivalence classes of all irreducible representations. Finally, we point out that each continuous group action $\sigma:P\times G\rightarrow P$ of a compact group $G$ on a compact space $P$ gives rise to a C\Star dynamical system $(C(P),G,\alpha_{\sigma})$ defined by 
\begin{align*}
\alpha_{\sigma}:G\times C(P)\rightarrow C(P), \quad (g,f)\mapsto f\circ\sigma_g.
\end{align*}

\subsubsection*{Hilbert Module Structures}

A huge part of this paper is concerned with Hilbert module structures. For the reader's convenience we recall some of the central definitions. Let $\alg B$ be a unital C\Star algebra. A~right pre-Hilbert $\aB$-module is a vector space $M$ which is a right $\aB$-module equipped with a positive definite $\aB$-valued sesquilinear form $\langle\cdot,\cdot\rangle_{\aB}$ satisfying
\begin{align*}
\langle x,y\cdot b\rangle_{\aB}=\langle x,y\rangle_{\aB}b \quad \text{and} \quad \langle x,y\rangle_{\aB}^*=\langle y,x\rangle_{\aB}
\end{align*}
for all $x,y\in M$ and $b\in\aB$. A~right Hilbert $\aB$-module is a right pre-Hilbert $\aB$-module~$M$ which is complete with respect to the norm given by $\norm{x}^2=\norm{\langle x,x\rangle_{\aB}}$ for $x\in M$. It is called a full right Hilbert $\aB$-module if the right ideal $J:=\Span\{\langle x,y\rangle_{\aB} \;|\; x,y\in M\}$ is dense in $\aB$. Since every dense ideal meets the invertible elements, in this case we have $J = \aB$. Left (pre-) Hilbert $\aB$-modules are defined in a similar way. Next, let $\aA$ and $\aB$ be unital C\Star algebras. A~right (pre-) Hilbert $\aA-\aB$-bimodule is a right (pre-) Hilbert $\aB$-module~$M$ that is also a left $\aA$-module satisfying
	\begin{align*}
		a\cdot(x\cdot b)=(a\cdot x)\cdot b \quad \text{and} \quad \langle a\cdot x,y\rangle_{\aB}=\langle x,a^*\cdot y\rangle_{\aB}
	\end{align*}
for all $x,y\in M$, $a\in\aA$ and $b\in\aB$.
We point out that right Hilbert $\aA-\aB$-bimodules are sometimes called $\aA-\aB$ correspondences in the literature. Given a right (pre-) Hilbert $\aA-\aB$-bimodule $M$ and a right (pre-) Hilbert $\aB-\aB$-bimodule~$M$, their algebraic $\aB$-tensor product $M\otimes_{\aB} N$ carries a natural right pre-Hilbert $\aA-\aB$-bimodule structure with right $\aB$-valued inner product given by
\begin{align}
		\langle x_1\otimes_{\aB} y_1,x_2\otimes_{\aB} y_2\rangle_{\aB}:=\langle y_1,\langle x_1,x_2\rangle_{\aB}\cdot y_2\rangle_{\aB} \label{tensor inner product}
\end{align}
for $x_1,x_2\in M$	and $y_1,y_2\in N$. In particular, its completion $M\widehat{\otimes}_{\aB} N$ with respect to the induced norm is a right Hilbert $\aA-\aB$-bimodule. Left (pre-) Hilbert $\aA-\aB$-bimodules are defined in a similar way. A~(pre-) Hilbert $\aA-\aB$-bimodule is an $\aA-\aB$-bimodule $M$ which is a left (pre-) Hilbert $\aA$-module and a right (pre-) Hilbert $\aB$-module satisfying
\begin{align*}
\langle a\cdot x,y\rangle_{\aB}=\langle x,a^*\cdot y\rangle_{\aB}, \quad {}_\aA\langle x\cdot b,y\rangle={}_\aA\langle x,y\cdot b^*\rangle \quad \text{and} \quad {}_\aA\langle x,y\rangle\cdot z=x\cdot\langle y,z\rangle_{\aB}
\end{align*}
for all $x,y,z\in M$, $a\in\aA$ and $b\in\aB$. A~Morita equivalence $\aA-\aB$-bimodule is a Hilbert $\aA-\aB$-bimodule with full inner products. The algebraic $\aB$-tensor product $M\otimes_{\aB} N$ of a (pre-) Hilbert $\aA-\aB$-bimodule $M$ and a (pre-) Hilbert $\aB-\aB$-bimodule $N$ carries a natural pre-Hilbert $\aA-\aB$-bimodule structure with inner products as in Equation (\ref{tensor inner product}).
		%
		%
Its completion $M\widehat{\otimes}_{\aB} N$ is a Hilbert $\aA-\aB$-bimodule. Finally, if $M$ is a Morita equivalence $\aA-\aB$-bimodule and $N$ a Morita equivalence $\aB-\aB$-bimodule, it is easily checked that the completion $M\widehat{\otimes}_{\aB} N$ is a Morita equivalence $\aA-\aB$-bimodule. For a detailed background on Hilbert module structures we refer to \cite{BeCoLe11,Bl98,EKQR06,La95,ReWi98}.

\section{On Free Group Actions: Some General Theory}
\label{sec:general_th}

The aim of this section is to discuss some of the forms of free actions of compact groups on C\Star algebras that have been used. In particular, we give some indications of their strengths and relationships to each other. Furthermore, we provide some methods to construct new free actions from given ones. As an example we present a one-parameter family of free $\SU(2)$-actions which is related to the Connes--Landi spheres. 

Throughout the following we use the symbol $\tensor_{\text{alg}}$ to denote the algebraic tensor product of vector spaces and we write $\tensor$ for the minimal tensor product of C\Star algebras.


\begin{prop}	\label{lemsatact}
	\emph{(\cite[\emph{Proposition 7.1.3}]{Ph87})}. Let $(\aA,G,\alpha)$ be a C\Star dynamical system with a unital C\Star algebra $\aA$ and a compact group $G$.
	Then the following definitions make a suitable completion of $\aA$ into a Hilbert $\aA\rtimes_{\alpha}G-{\aA}^G$-bimodule:
	\begin{itemize}
		\item[\emph{(i)}]  
			$f\cdot x:=\int_G f(g)\alpha_g(x)\;dg$ for $f\in L^1(G,\aA,\alpha)$ and $x\in \aA$.
		\item[\emph{(ii)}]  
			$x\cdot b:=xb$ for $x\in\aA$ and $b\in {\aA}^G$.
		\item[\emph{(iii)}] 
			${}_{\aA\rtimes_{\alpha}G}\langle x,y\rangle$ is the function $g\mapsto x\alpha_g(y^*)$ for $x,y\in \aA$.
		\item[\emph{(iv)}]  
			$\langle x,y\rangle_{{\aA}^G}:=\int_G\alpha_g(x^*y)\;dg$ for $x,y\in \aA$.
	\end{itemize}
\end{prop}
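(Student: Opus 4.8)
The plan is to build the two module structures in turn and then tie them together through the imprimitivity identity, writing $P_0(x):=\int_G\alpha_g(x)\,dg$ for the conditional expectation onto $\aA^G$, so that the right inner product (iv) reads $\langle x,y\rangle_{\aA^G}=P_0(x^*y)$. First I would settle the right Hilbert $\aA^G$-module structure. Right $\aA^G$-linearity and the symmetry $\langle x,y\rangle_{\aA^G}^*=\langle y,x\rangle_{\aA^G}$ are immediate from $P_0$ being a positive $\aA^G$-bimodule map with $P_0(c^*)=P_0(c)^*$. Positivity of $\langle x,x\rangle_{\aA^G}=P_0(x^*x)$ is clear; for definiteness I would invoke faithfulness of $P_0$: composing with a faithful state $\omega$ of $\aA$, the function $g\mapsto\omega(\alpha_g(x^*x))$ is continuous and nonnegative with vanishing integral, hence identically zero, and evaluation at the unit gives $\omega(x^*x)=0$, so $x=0$. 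Completing $\aA$ in the norm $\|x\|_M^2=\|P_0(x^*x)\|$ yields a right Hilbert $\aA^G$-module $M$.

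For the left action I would avoid ad hoc $L^1$-estimates and instead exhibit a covariant representation on $M$. Set $\lambda(a)x:=ax$ and $U_gx:=\alpha_g(x)$. A short computation gives $\langle ax,y\rangle_{\aA^G}=\langle x,a^*y\rangle_{\aA^G}$ and, using $P_0\circ\alpha_g=P_0$, also $\langle\alpha_g(x),\alpha_g(y)\rangle_{\aA^G}=\langle x,y\rangle_{\aA^G}$, so $\lambda$ is a $*$-representation of $\aA$ by adjointable operators and each $U_g$ is a unitary adjointable operator, with $U$ strongly continuous (from $\|U_gx-U_{g_0}x\|_M\le\|\alpha_g(x)-\alpha_{g_0}(x)\|_{\aA}$ and continuity of $\alpha$) and $U_g\lambda(a)U_g^*=\lambda(\alpha_g(a))$. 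Hence $(\lambda,U)$ is covariant and its integrated form $\lambda\rtimes U$ is a $*$-representation of $\aA\rtimes_\alpha G$ by adjointable operators on $M$; on $L^1(G,\aA,\alpha)$ it is given precisely by (i), i.e.\ $(\lambda\rtimes U)(f)x=\int_G f(g)\alpha_g(x)\,dg=f\cdot x$. This furnishes the left $\aA\rtimes_\alpha G$-module structure by adjointable operators, together with the compatibility $f\cdot(x\cdot b)=(f\cdot x)\cdot b$ and $\langle f\cdot x,y\rangle_{\aA^G}=\langle x,f^*\cdot y\rangle_{\aA^G}$, both inherited from $\lambda$ and $U$ being right $\aA^G$-linear and from $(\lambda\rtimes U)(f)^*=(\lambda\rtimes U)(f^*)$.

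It remains to install the left $\aA\rtimes_\alpha G$-valued inner product (iii) and to verify the three bimodule identities. The function $g\mapsto x\alpha_g(y^*)$ is continuous, hence lies in $C(G,\aA)\subseteq\aA\rtimes_\alpha G$. The involution of the crossed product yields ${}_{\aA\rtimes_\alpha G}\langle x,y\rangle^*={}_{\aA\rtimes_\alpha G}\langle y,x\rangle$, the relation $\alpha_g(b)=b$ for $b\in\aA^G$ gives ${}_{\aA\rtimes_\alpha G}\langle x\cdot b,y\rangle={}_{\aA\rtimes_\alpha G}\langle x,y\cdot b^*\rangle$, and the crucial imprimitivity identity is the short computation
\[
{}_{\aA\rtimes_\alpha G}\langle x,y\rangle\cdot z=\int_G x\,\alpha_g(y^*)\,\alpha_g(z)\,dg=x\int_G\alpha_g(y^*z)\,dg=x\cdot\langle y,z\rangle_{\aA^G}.
\]
For positivity and definiteness of the left inner product I would pass to a faithful covariant representation $(\pi,V)$ of $(\aA,G,\alpha)$ and use the invariant projection $E:=\int_G V_s\,ds$ (which satisfies $E^*=E=E^2$) together with covariance $\pi(\alpha_s(x^*))V_s=V_s\pi(x^*)$ to compute
\[
(\pi\rtimes V)\bigl({}_{\aA\rtimes_\alpha G}\langle x,x\rangle\bigr)=\int_G\pi(x\alpha_s(x^*))\,V_s\,ds=\pi(x)\Bigl(\int_G V_s\,ds\Bigr)\pi(x^*)=\pi(x)\,E\,\pi(x)^*,
\]
which is positive because $E$ is a projection, so $\pi(x)E\pi(x)^*=(\pi(x)E)(\pi(x)E)^*\ge0$; definiteness follows since $C(G,\aA)\hookrightarrow\aA\rtimes_\alpha G$ is injective and evaluation at the unit returns $xx^*$. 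As the two module norms agree on a Hilbert bimodule, all of this extends to the single completion $M$.

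I expect the main obstacle to be the passage from the convolution $*$-algebra $L^1(G,\aA,\alpha)$ to the full crossed product $\aA\rtimes_\alpha G$ by genuinely bounded, adjointable operators, together with the positivity of the left inner product; the covariant-representation/integrated-form viewpoint is precisely what makes both clean, reducing them to the universal property of the crossed product and to the averaging projection $E$. By contrast, once faithfulness of $P_0$ is in hand, the right-module axioms and the remaining bimodule identities are routine verifications.
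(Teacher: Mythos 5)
The paper gives no proof of this proposition at all: it is quoted verbatim from \cite[Proposition 7.1.3]{Ph87}, so the only comparison available is with the standard argument, which your proposal essentially reconstructs, and correctly. The structure is right: the right Hilbert $\aA^G$-module via the faithful conditional expectation $P_0$, the left $\aA\rtimes_\alpha G$-action as the integrated form of the covariant pair $(\lambda,U)$ on the completion (which is exactly what makes the $L^1$-estimates and boundedness automatic, via the universal property of the full crossed product), the imprimitivity identity ${}_{\aA\rtimes_\alpha G}\langle x,y\rangle\cdot z = x\cdot\langle y,z\rangle_{\aA^G}$ by the convolution computation, and positivity of (iii) through the averaging projection $E=\int_G V_s\,ds$. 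The last step is sound precisely because your computation $(\pi\rtimes V)({}_{\aA\rtimes_\alpha G}\langle x,x\rangle)=\pi(x)E\pi(x)^*$ is representation-independent and every nondegenerate representation of the full crossed product is the integrated form of a covariant pair, so positivity holds in $\aA\rtimes_\alpha G$ itself; likewise the injectivity of $C(G,\aA)\hookrightarrow\aA\rtimes_\alpha G$ that you use for definiteness is guaranteed by the regular representation induced from a faithful representation of $\aA$.

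Two small repairs. First, your definiteness argument for (iv) invokes a faithful state of $\aA$, which need not exist for a general (nonseparable) unital C\Star algebra, e.g.\ $\ell^\infty(\Gamma)$ with $\Gamma$ uncountable; but faithfulness is not needed: if $P_0(x^*x)=0$ then for \emph{every} state $\omega$ the continuous nonnegative function $g\mapsto\omega(\alpha_g(x^*x))$ has zero integral, hence vanishes at $g=e$, and since states separate positive elements, $x=0$. Second, among the bimodule axioms you should also record the left $\aA\rtimes_\alpha G$-linearity of (iii), \ie\ ${}_{\aA\rtimes_\alpha G}\langle f\cdot x,y\rangle = f\ast{}_{\aA\rtimes_\alpha G}\langle x,y\rangle$; this is a one-line check from the convolution formula, using $\alpha_h\bigl(x\,\alpha_{h^{-1}g}(y^*)\bigr)=\alpha_h(x)\,\alpha_g(y^*)$, and similarly the involution identity uses $f^*(g)=\alpha_g\bigl(f(g^{-1})^*\bigr)$ (no modular function since $G$ is compact). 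With these touch-ups the proof is complete and matches the classical imprimitivity-type argument behind Phillips' proposition.
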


It is easily seen that the module under consideration in the previous statement is almost a Morita equivalence $\aA\rtimes_{\alpha}G-{\aA}^G$-bimodule. In fact, the only missing condition is that the range of ${}_{\aA\rtimes_{\alpha}G}\langle\cdot,\cdot\rangle$ need not be dense. The imminent definition was originally introduced by M. Rieffel and has a number of good properties that resemble the classical theory of free actions of compact groups as we will soon see below.

\begin{defn}\label{defsatact}(\cite[\emph{Definition 7.1.4}]{Ph87}).
Let $(\aA,G,\alpha)$ be a C\Star dynamical system with a unital C\Star algebra $\aA$ and a compact group $G$. We call $(\aA,G,\alpha)$ \emph{free} if the bimodule from Proposition \ref{lemsatact} is a Morita equivalence bimodule, that is, the range of ${}_{\aA\rtimes_{\alpha}G}\langle\cdot,\cdot\rangle$ is dense in the crossed product $\aA\rtimes_{\alpha}G$.
\end{defn}

\begin{rmk}
We point out that M. Rieffel used the notion ``\emph{saturated}'' instead of free, i.\,a., because of its relation to Fell bundles in the case of compact Abelian group actions. Moreover, we recall that \cite[Definition 1.6]{Ri88} provides a notion for free actions of locally compact groups which is consistent with Definition \ref{defsatact} for compact groups.
\end{rmk}

The next result shows that Definition \ref{defsatact} extends the classical notion of free actions of compact groups.

\begin{thm} \emph{(\cite[\emph{Proposition 7.1.12 and Theorem 7.2.6}]{Ph87})}. 
	\label{sat=free/comm}
Let $P$ be a compact space and $G$ a compact group. A~continuous group action $\sigma:P\times G\rightarrow P$ is free if and only if the corresponding C\Star dynamical system $(C(P),G,\alpha_{\sigma})$ is free in the sense of Definition~\ref{defsatact}.
\end{thm}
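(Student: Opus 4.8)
The plan is to rephrase freeness in the sense of Definition~\ref{defsatact} as a density statement for the range of ${}_{C(P)\rtimes_\alpha G}\langle\cdot,\cdot\rangle$ and then to analyse this range concretely in the commutative setting. By Proposition~\ref{lemsatact}(iii), for $f_1,f_2\in C(P)$ the element ${}_{C(P)\rtimes_\alpha G}\langle f_1,f_2\rangle$ is the function $(g,p)\mapsto f_1(p)\overline{f_2(\sigma(p,g))}$ on $G\times P$. I observe that these are exactly the pullbacks of the elementary tensors $f_1\otimes\overline{f_2}\in C(P\times P)$ along the continuous map $\Theta\colon P\times G\to P\times P$, $\Theta(p,g):=(p,\sigma(p,g))$. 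The whole argument hinges on the elementary remark that $\sigma$ is free if and only if $\Theta$ is injective, since $\Theta(p,g)=\Theta(p',g')$ forces $p=p'$ and $\sigma(p,g)=\sigma(p,g')$, i.e.\ $g(g')^{-1}\in G_p$.

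For the forward implication, if $\sigma$ is free then $\Theta$ is a continuous injection from the compact space $P\times G$ into the Hausdorff space $P\times P$, hence a homeomorphism onto its closed image. Thus $\Theta^*$ is an isometric isomorphism $C(\im\Theta)\to C(P\times G)$, and composing it with the (surjective, by Tietze) restriction map $C(P\times P)\to C(\im\Theta)$ gives a surjection $C(P\times P)\to C(P\times G)$ sending $f_1\otimes\overline{f_2}$ to ${}_{C(P)\rtimes_\alpha G}\langle f_1,f_2\rangle$. The elementary tensors span a $^*$\ndash subalgebra of $C(P\times P)$ that separates points and contains the constants, so by Stone--Weierstrass their span is uniformly dense; applying the surjection, the span of the inner products is uniformly dense in $C(P\times G)=C(G,C(P))$. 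Since the full crossed\ndash product norm is dominated by the $L^1$\ndash norm and hence by the supremum norm, and since $C(G,C(P))$ is dense in $C(P)\rtimes_\alpha G$ (note $G$ compact, so full and reduced crossed products coincide), the range of the inner product is dense and the system is free in the sense of Definition~\ref{defsatact}.

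The converse is the delicate part and the main obstacle: uniform non-density does \emph{not} automatically yield non-density in the weaker crossed\ndash product norm, so a soft ``constant on the fibres of $\Theta$'' argument in $C(P\times G)$ does not suffice. To get around this I would produce an explicit representation of the crossed product annihilating the range of the inner product but not the unit. Assume $\sigma$ is not free and fix $p_0\in P$ with $H:=G_{p_0}\neq\{e\}$. On $L^2(G)$ consider the covariant pair $(\pi(f)\xi)(g):=f(\sigma(p_0,g))\,\xi(g)$ and $(U_s\xi)(g):=\xi(gs)$; one checks covariance via $\sigma(\sigma(p_0,g),s)=\sigma(p_0,gs)$, and the integrated form $\rho$ is a unital $^*$\ndash representation with $\rho(\one)=\pi(\one_{C(P)})=\id$. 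A direct computation (substituting $t=gs$ and using invariance of Haar measure) shows that $\rho$ sends ${}_{C(P)\rtimes_\alpha G}\langle f_1,f_2\rangle$ to the rank-one operator $\lvert\zeta_1\rangle\langle\eta_2\rvert$ with $\zeta_1(g)=f_1(\sigma(p_0,g))$ and $\eta_2(g)=f_2(\sigma(p_0,g))$.

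Finally I would read off the obstruction from the orbit structure. Because $\sigma(p_0,hg)=\sigma(p_0,g)$ for $h\in H$, both vectors $\zeta_1,\eta_2$ lie in the closed subspace of left\ndash$H$\ndash invariant functions, canonically $L^2(H\backslash G)\subsetneq L^2(G)$, the inclusion being proper precisely because $H\neq\{e\}$. Hence every operator in $\rho\bigl(\Span\{{}_{C(P)\rtimes_\alpha G}\langle f_1,f_2\rangle\}\bigr)$, and by norm\ndash continuity of $\rho$ every operator in its closure, vanishes on the nonzero subspace $L^2(H\backslash G)^\perp$; as $\rho(\one)=\id$ does not, the range of the inner product fails to be dense. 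This establishes the contrapositive and completes the equivalence.
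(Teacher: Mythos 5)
The paper itself gives no proof of this theorem: it is quoted verbatim from Phillips' book (Proposition 7.1.12 and Theorem 7.2.6 of \cite{Ph87}), so there is no in-text argument to compare against, and your proposal should be judged as a self-contained substitute. Its architecture is sound, and in fact it reproduces in elementary form the mechanism behind Phillips' proof. The forward direction is correct as written: the identification of ${}_{C(P)\rtimes_{\alpha}G}\langle f_1,f_2\rangle$ with the pullback of $f_1\tensor\overline{f_2}$ along $\Theta(p,g)=(p,\sigma(p,g))$, the remark that freeness of $\sigma$ is exactly injectivity of $\Theta$, compactness making $\Theta$ a homeomorphism onto its closed image, Tietze plus Stone--Weierstrass, and the norm comparison $\norm{\cdot}_{C(P)\rtimes G}\leq\norm{\cdot}_{L^1}\leq\norm{\cdot}_{\infty}$ all check out; note that surjectivity of the restriction map (\ie Tietze) is what lets you push density forward, since a bounded linear surjection maps dense sets to dense sets. (The aside on full versus reduced crossed products is true by amenability of compact groups but not needed.) For the converse, your covariant pair on $L^2(G)$ is precisely the representation induced from the point $p_0$, the covariance and the rank-one formula $\rho\bigl({}_{C(P)\rtimes_\alpha G}\langle f_1,f_2\rangle\bigr)=\lvert\zeta_1\rangle\langle\eta_2\rvert$ are verified by the substitution you indicate, and $\zeta_1,\eta_2$ do lie in the proper closed subspace $V:=L^2(H\backslash G)$ when $H=G_{p_0}\neq\{e\}$.

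There is, however, one step that fails as stated: the claim ``$\rho(\one)=\pi(\one_{C(P)})=\id$''. The crossed product $C(P)\rtimes_{\alpha_\sigma}G$ is \emph{not} unital when $G$ is infinite --- a unit would have to be a Dirac mass at $e\in G$; concretely $\mathbb{C}\rtimes\mathbb{T}\cong c_0(\mathbb{Z})$ --- so ``$\one$'' does not denote an element of the algebra $\rho$ is defined on, and the concluding contradiction is vacuous as written. The repair is immediate and does not disturb the rest of the argument: since $\pi$ is unital, the integrated form $\rho$ is \emph{nondegenerate} (for an approximate delta $(\phi_n)\subseteq C(G)$ at the identity one has $\rho(\phi_n\tensor\one_{C(P)})\xi\rightarrow\xi$ for all $\xi\in L^2(G)$). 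Every operator $\lvert\zeta_1\rangle\langle\eta_2\rvert$ has range in the closed subspace $V$, hence so does every operator in the norm closure of $\rho\bigl(\Span\{{}_{C(P)\rtimes_\alpha G}\langle f_1,f_2\rangle\}\bigr)$; if that span were dense in $C(P)\rtimes_{\alpha_\sigma}G$, contractivity of $\rho$ would give $\overline{\rho\bigl(C(P)\rtimes_{\alpha_\sigma}G\bigr)L^2(G)}\subseteq V\subsetneq L^2(G)$, contradicting nondegeneracy. With this one-line fix your proof is complete and, unlike the paper, self-contained.
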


Another hint for the strength of Definition \ref{defsatact} comes from the following observation: Let $(P,X,G,q,\sigma)$ be a locally trivial principal bundle and $(\pi,V)$ a finite-dimensional unitary representation of $G$. Then it is a well-known fact that the isotypic component $C(P)(\pi)$ is finitely generated and projective as a right $C(X)$-module (\cf \cite[Proposition 8.5.2]{Wa11a}). In the C\Star algebraic setting a similar statement is valid.

\begin{thm}\label{isononcommvec}\emph{(\cite[\emph{Theorem 1.2}]{DKY12})}. 
Let $(\aA,G,\alpha)$ be a C\Star dynamical system with a unital C\Star algebra $\aA$ and a compact group $G$. Futhermore, let $(\pi,V)$ be a finite-dimensional unitary representation of $G$. If $(\aA,G,\alpha)$ is free, then the corresponding isotypic component $A(\pi)$ is finitely generated and projective as a right ${\aA}^G$-module.
\end{thm}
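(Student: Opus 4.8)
The plan is to realize $A(\pi)$ as a corner of the Hilbert bimodule from Proposition~\ref{lemsatact} and to use freeness to manufacture a finite frame for it over $\aB := \aA^G$. Let $M$ denote the completion of $\aA$ into the Hilbert $\aA\rtimes_\alpha G - \aB$-bimodule of Proposition~\ref{lemsatact}, so that the right inner product is $\langle x,y\rangle_\aB = P_0(x^*y)$ and the left action of $f\in C(G,\aA)\subseteq\aA\rtimes_\alpha G$ is $f\cdot x = \int_G f(g)\,\alpha_g(x)\,dg$. The first observation is that the scalar-valued function $p_\pi(g) := \dim V\cdot\tr_V(\pi_g^*)\,\one$ defines a projection in $\aA\rtimes_\alpha G$, which is a direct consequence of the Schur orthogonality relations for the characters of $G$, and that its left action on $M$ is precisely the extension of $P_\pi$; in particular $p_\pi\cdot x = P_\pi(x)$ for $x\in\aA$ and $p_\pi\cdot M = \overline{A(\pi)}$.

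Before exploiting freeness I would first verify that $A(\pi)$ is already complete in the module norm, i.e.\ that $A(\pi) = \overline{A(\pi)} = p_\pi\cdot M$. For this I would prove the estimate $\norm{x}_\aA \le (\dim V)^{3/2}\,\norm{P_0(x^*x)}^{1/2}$ for all $x\in A(\pi)$, where $\norm{\cdot}_\aA$ is the $C^*$-norm. The key input, coming from the Peter--Weyl theorem applied to the orbit map $g\mapsto\alpha_g(x)$, is that every $x\in A(\pi)$ admits a finite matrix-coefficient expansion $\alpha_g(x) = \sum_{i,j}\pi_{ij}(g)\,a_{ij}$ with $a_{ij}\in A(\pi)$ in which each column $(a_{ij})_i$ is a $\pi$-multiplet. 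Writing $x = \sum_i a_{ii}$ and using the Schur relations one computes $P_0(a_{ij}^*a_{kl}) = \tfrac{1}{\dim V}\delta_{ik}\sum_p a_{pj}^*a_{pl}$, whence the estimate follows from the operator inequalities $a_{ii}^*a_{ii}\le\sum_p a_{pi}^*a_{pi}$ and the triangle inequality. Since $A(\pi)$ is the range of the bounded idempotent $P_\pi$ on $\aA$, it is closed in the $C^*$-norm, and the estimate shows the $C^*$-norm and the module norm are equivalent on $A(\pi)$; hence $A(\pi)$ is a complete right Hilbert $\aB$-module and coincides with $p_\pi\cdot M$.

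With this in hand I would invoke freeness. By Definition~\ref{defsatact} the finite sums $\sum_i {}_{\aA\rtimes_\alpha G}\langle x_i,y_i\rangle$ are dense in $\aA\rtimes_\alpha G$, so I can choose such a sum $S$ with $\norm{p_\pi - S} < 1$; approximating the $x_i,y_i$ by elements of $\aA$ and then replacing them by $P_\pi(x_i),P_\pi(y_i)\in A(\pi)$, equivalently compressing by the projection $p_\pi$, I may assume $x_i,y_i\in A(\pi)$ and $S = p_\pi S p_\pi$. Then $S$ lies in the unital $C^*$-algebra $p_\pi(\aA\rtimes_\alpha G)p_\pi$ with unit $p_\pi$ and still satisfies $\norm{p_\pi - S} < 1$, hence is invertible there with inverse $b$. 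Using left $\aA\rtimes_\alpha G$-linearity of the left inner product, $p_\pi = bS = \sum_i {}_{\aA\rtimes_\alpha G}\langle b\cdot x_i,\,y_i\rangle$, and applying this identity to any $z\in A(\pi) = p_\pi\cdot M$ together with the bimodule relation ${}_{\aA\rtimes_\alpha G}\langle u,v\rangle\cdot z = u\,\langle v,z\rangle_\aB$ yields the reconstruction formula $z = \sum_i u_i\,\langle y_i,z\rangle_\aB$ with $u_i := b\cdot x_i\in A(\pi)$. This exhibits $A(\pi)$ as a retract of $\aB^{\,n}$ via the $\aB$-linear maps $z\mapsto(\langle y_i,z\rangle_\aB)_i$ and $(b_i)_i\mapsto\sum_i u_i b_i$, which is exactly finite generation and projectivity.

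The step I expect to be the \emph{main obstacle} is the completeness claim of the second paragraph. The bimodule norm is strictly weaker than the $C^*$-norm, so a priori the generators $u_i = b\cdot x_i$ produced by inverting inside the crossed product live only in the completion $\overline{A(\pi)}$ rather than in the algebraic isotypic component $P_\pi(\aA)$. Establishing the norm equivalence through the multiplet decomposition is precisely what anchors the argument to the genuine module $A(\pi)$, and it is the one place where a concrete representation-theoretic computation, as opposed to soft Morita-theoretic formalism, seems unavoidable.
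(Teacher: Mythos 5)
The paper offers no proof of this theorem: it is imported verbatim from \cite[Theorem 1.2]{DKY12}, so there is no internal argument to measure you against. Judged on its own, your proof is correct and self-contained, and you have correctly isolated the one genuinely delicate point. The projection $p_\pi(g)=\dim V\cdot\tr_V(\pi_g^*)\one$ is indeed a self-adjoint idempotent in $\aA\rtimes_\alpha G$ whose left action on the bimodule of Proposition~\ref{lemsatact} extends $P_\pi$; your norm estimate $\norm{x}\leq(\dim V)^{3/2}\norm{P_0(x^*x)}^{1/2}$ on $A(\pi)$ is valid (the multiplet computation $P_0(a_{ij}^*a_{kl})=\tfrac{1}{\dim V}\delta_{ik}\sum_p a_{pj}^*a_{pl}$ checks out, and $x=\sum_i a_{ii}$ with $a_{ii}^*a_{ii}\leq\dim V\cdot P_0(x^*x)$ gives exactly your constant), so $A(\pi)=p_\pi\cdot M$ is complete and the frame elements $u_i=b\cdot x_i$ produced by inverting $S$ in the unital corner $p_\pi(\aA\rtimes_\alpha G)p_\pi$ land in the honest isotypic component rather than in some larger completion; the reconstruction $z=\sum_i u_i\langle y_i,z\rangle_{\aB}$ then exhibits $A(\pi)$ as a retract of $\aB^n$. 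Compared with the cited source: De Commer and Yamashita work in the more general compact \emph{quantum} group setting and extract a finite module frame in the spectral subspace directly from the Ellwood-type density condition, without passing through the crossed product; your route through Rieffel's imprimitivity bimodule and the corner projection is the classical saturation mechanism in the spirit of Rieffel and Phillips. What your approach buys is a transparent Morita-theoretic picture ($A(\pi)$ as the $p_\pi$-corner of the equivalence bimodule, with projectivity falling out of a Neumann-series inversion); what it costs is that it is genuinely tied to compact groups and to the crossed-product machinery, whereas the quantum-group argument is more portable.

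One small repair is needed: your Schur orthogonality computations --- both the idempotency of $p_\pi$ and the estimate via matrix coefficients --- presuppose that $(\pi,V)$ is \emph{irreducible}, while the theorem is stated for an arbitrary finite-dimensional unitary representation (for reducible $\pi$ the operator $P_\pi$ as written is not even idempotent; it is a positive-coefficient combination of the mutually orthogonal projections onto the isotypic components of the irreducible constituents). This is harmless: decompose $\pi$ into its finitely many irreducible summands $\pi_1,\dots,\pi_r$, note that $P_\pi(\aA)=A(\pi_1)+\dots+A(\pi_r)$ is the (finite, direct) sum of the corresponding isotypic components, run your argument for each $\pi_k$, and use that a finite direct sum of finitely generated projective right $\aA^G$-modules is again finitely generated projective. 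With that one-line reduction inserted, the proof stands.
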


We proceed with introducing two more notions which will turn out to be equivalent characterizations of noncommutative freeness. The first notion is a C\Star algebraic version of the purely algebraic Hopf--Galois condition (\cf \cite{Haj04, Sch04}) and is due to D.\,A.~Ellwood.

\begin{defn}\label{ellwood}(\cite[\emph{Definition 2.4}]{Ellwood00}).
Let $(\aA,G,\alpha)$ be a C\Star dynamical system with a unital C\Star algebra $\aA$ and a compact group $G$. We say that $(\aA,G,\alpha)$ satisfies the \emph{Ellwood condition} if the map
\begin{align*}
\Phi:\aA\otimes_{\text{alg}}\aA\rightarrow C(G,\aA), \quad \Phi(x\otimes y)(g):=x\alpha_g(y)
\end{align*}
has dense range (with respect to the canonical C\Star norm on $C(G,\aA)$).
\end{defn}

The second notion is of representation-theoretic nature and makes use of the so-called generalized isotypic components of a C\Star dynamical system. 

\begin{defn}	\label{NCVB II}	
Let $(\aA,G,\alpha)$ be a C\Star dynamical system with a unital C\Star algebra $\aA$ and a compact group $G$. Furthermore, let $(\pi,V)$ be a finite-dimensional unitary representation of $G$. Then the space
\begin{align*}
				A_2(\pi):=\left\{s\in\aA \tensor \End(V) \;|\; \alpha_g(s) = s \cdot \pi_g \quad \text{for all} \quad g\in G\right\}
			\end{align*}
is called the \emph{generalized isotypic component} of $(\pi,V)$. It is easily checked that the canonical right action of the unital C\Star algebra
\begin{align*}
				\aC(\pi):=\{c\in\aA \tensor \End(V) \;|\;\alpha_g(c) = \pi^*_g \cdot c \cdot \pi_g\quad \text{for all} \quad g\in G\}
			\end{align*}
turns $A_2(\pi)$ into a right $\aC(\pi)$-module.
			
\end{defn}

The following statement describes the natural Hilbert bimodule structure of the generalized isotypic components. The arguments consist of straightforward computations and therefore we omit the proof.

\begin{prop}
Let $(\aA,G,\alpha)$ be a C\Star dynamical system with a unital C\Star algebra $\aA$ and a compact group $G$. Furthermore, let $(\pi,V)$ be a finite-dimensional unitary representation of $G$. Then the  following definitions make $A_2(\pi)$ into a $\aA^G\otimes\End(V)-\aC(\pi)$-Hilbert bimodule:
\begin{itemize}
	\item[\emph{(i)}] 
		$b.s:=bs$ for $b\in\aA^G\otimes\End(V)$ and $s\in A_2(\pi)$.
	\item[\emph{(ii)}]
		$s.c:=sc$ for $s\in A_2(\pi)$ and $c\in\aC(\pi)$.
	\item[\emph{(iii)}]
		${}_{\aA^G\otimes\End(V)}\langle s,t\rangle:=st^*$ for $s,t\in A_2(\pi)$.
	\item[\emph{(iv)}]
		$\langle s,t\rangle_{\aC(\pi)}:=s^*t$ for $s,t\in A_2(\pi)$.
\end{itemize}
\end{prop}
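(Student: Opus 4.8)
The plan is to reduce every assertion to one reformulation of the defining covariance conditions. For $s \in \aA \otimes \End(V)$ one has $(\alpha_g \otimes R[\pi_g])(s) = (\alpha_g \otimes \id)(s)\,(\one \otimes \pi_g^*)$, so membership $s \in A_2(\pi)$ is equivalent to
\[
	(\alpha_g \otimes \id)(s) = s\,(\one \otimes \pi_g) \qquad (g \in G).
\]
Similarly, since $\Ad[\pi_g](S) = \pi_g S \pi_g^*$, an element $c$ lies in $\aC(\pi)$ exactly when $(\alpha_g \otimes \id)(c) = (\one \otimes \pi_g^*)\,c\,(\one \otimes \pi_g)$, and an element $b$ lies in $\aA^G \otimes \End(V)$ exactly when $(\alpha_g \otimes \id)(b) = b$. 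First I would record these three equivalent relations, as they turn all the remaining checks into short computations with the unitaries $\one \otimes \pi_g$.

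Next I would verify that the actions and inner products take values in the claimed spaces. For the left action, $(\alpha_g \otimes \id)(bs) = b\,(\alpha_g \otimes \id)(s) = b\,s\,(\one \otimes \pi_g)$, whence $bs \in A_2(\pi)$; for the right action, $(\alpha_g \otimes \id)(sc) = s\,(\one \otimes \pi_g)(\one \otimes \pi_g^*)\,c\,(\one \otimes \pi_g) = sc\,(\one \otimes \pi_g)$, whence $sc \in A_2(\pi)$. For the inner products one computes $(\alpha_g \otimes \id)(st^*) = s\,(\one \otimes \pi_g)(\one \otimes \pi_g^*)\,t^* = st^*$, so $st^* \in \aA^G \otimes \End(V)$, and $(\alpha_g \otimes \id)(s^*t) = (\one \otimes \pi_g^*)\,s^*t\,(\one \otimes \pi_g)$, so $s^*t \in \aC(\pi)$. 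These four closure statements are the only place where the reformulated relations enter.

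The remaining bimodule axioms are then formal consequences of associativity and of the involution in the ambient C\Star algebra $\aA \otimes \End(V)$. The right inner product $\langle s,t\rangle_{\aC(\pi)} = s^*t$ is $\aC(\pi)$-linear in the second entry and satisfies $\langle s,t\rangle_{\aC(\pi)}^* = \langle t,s\rangle_{\aC(\pi)}$; the left inner product ${}_{\aA^G \otimes \End(V)}\langle s,t\rangle = st^*$ is treated symmetrically; the adjointability identities such as $\langle b\cdot s,t\rangle_{\aC(\pi)} = \langle s,b^*\cdot t\rangle_{\aC(\pi)}$ amount to $(bs)^*t = s^*b^*t$; and the compatibility ${}_{\aA^G \otimes \End(V)}\langle s,t\rangle\cdot u = s\cdot\langle t,u\rangle_{\aC(\pi)}$ is literally $(st^*)u = s(t^*u)$. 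Positivity and positive-definiteness are inherited from $\aA \otimes \End(V)$, since $s^*s$ and $ss^*$ are positive there and $\aC(\pi)$ as well as $\aA^G \otimes \End(V)$ are C\Star subalgebras in which positivity agrees with ambient positivity.

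Finally I would treat completeness. As the common fixed-point set of the continuous family $s \mapsto (\alpha_g \otimes R[\pi_g])(s)$, the space $A_2(\pi)$ is norm-closed in $\aA \otimes \End(V)$, hence complete. Its Hilbert-module norm is $\norm{\langle s,s\rangle_{\aC(\pi)}}^{1/2} = \norm{s^*s}^{1/2} = \norm{s}$, the restriction of the ambient C\Star norm, and the same computation with $ss^*$ shows the norms induced by the two inner products coincide, as required. I do not expect a genuine obstacle: the only point demanding care is the bookkeeping of the unitaries $\one \otimes \pi_g$ in the four closure statements, which is precisely why isolating the multiplicative reformulation at the outset pays off.
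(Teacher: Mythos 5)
Your proposal is correct and complete: the reformulation of the three covariance conditions via the unitaries $\one \otimes \pi_g$ makes all four closure statements immediate, and the remaining axioms, positivity, and completeness follow exactly as you say. The paper explicitly omits the proof as a ``straightforward computation,'' and your argument is precisely that intended verification, so there is nothing to compare beyond noting that you have carried out what the authors left to the reader.
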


We are now ready to present the second notion which is of major relevance in our attempt to classify free C\Star dynamical systems.

\begin{defn}(\cite[Definition 1.1 (b)]{GoLaPe94}).
Let $(\aA,G,\alpha)$ be a C\Star dynamical system with a unital C\Star algebra $\aA$ and a compact group $G$. The \emph{Averson spectrum} of $(\aA,G,\alpha)$ is defined as
\begin{align*}
\text{Sp}(\alpha):=\Bigl\{[(\pi,V)]\in\widehat{G} \;\big{|}\; \Span\{\langle s,t\rangle_{\aC(\pi)}\;|\; s,t\in A_2(\pi)\}=\aC(\pi)\Bigr\}.
\end{align*}
That is, $[(\pi,V)]\in\text{Sp}(\alpha)$ if and only if the corresponding generalized isotypic component $A_2(\pi)$ is a full right $\aA^G\otimes\End(V)-\aC(\pi)$-Hilbert bimodule.
\end{defn}

As the following result finally shows, all the previous notions agree.

\pagebreak[3]
\begin{thm}\label{equcondsatact} 
Let $(\aA,G,\alpha)$ be a C\Star dynamical system with a unital C\Star algebra $\aA$ and a compact group $G$. Then the following statements are equivalent:
\begin{itemize}
	\item[\emph{(a)}] 
		The C\Star dynamical system $(\aA,G,\alpha)$ is free.
	\item[\emph{(b)}]
		The C\Star dynamical system $(\aA,G,\alpha)$ satisfies the Ellwood condition.
	\item[\emph{(c)}]
		The C\Star dynamical system $(\aA,G,\alpha)$ satisfies $\text{Sp}(\alpha)=\widehat{G}$.
\end{itemize}
\end{thm}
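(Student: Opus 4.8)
The three conditions are all density statements about one and the same linear subspace. Writing $\mathcal{S} := \Span\{\, g \mapsto x\alpha_g(y) \mid x,y \in \aA \,\} \subseteq C(G,\aA)$ and observing that $y \mapsto y^*$ is a bijection of $\aA$, the functions $g\mapsto x\alpha_g(y^*)$ that occur as left inner products ${}_{\aA\rtimes_\alpha G}\langle x,y\rangle$ in Proposition~\ref{lemsatact} span this very same space $\mathcal S$. Thus (a) asks that $\mathcal S$ be dense in $\aA\rtimes_\alpha G$ (crossed\ndash product norm), while (b) asks that $\mathcal S$ be dense in $C(G,\aA)$ (supremum norm). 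Condition~(c) is a level\ndash by\ndash level version of the same demand: since $\aC(\pi)$ is unital, the equality $\Span\{s^*t\}=\aC(\pi)$ is equivalent to mere density of this ideal, by the dense\ndash ideal trick recalled in Section~\ref{sec:prel}. The plan is to prove the cycle (b)$\Rightarrow$(a)$\Rightarrow$(c)$\Rightarrow$(b), using the Peter--Weyl decomposition as the organizing device.

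The implication (b)$\Rightarrow$(a) is the soft one. On $C(G,\aA)$ one has $\norm{f}_{\aA\rtimes_\alpha G}\leq\norm{f}_{L^1}\leq\norm{f}_\infty$ with respect to probability Haar measure, so the identity map $C(G,\aA)\to\aA\rtimes_\alpha G$ is norm\ndash decreasing with dense range. Hence supremum\ndash density of $\mathcal S$ in $C(G,\aA)$ forces crossed\ndash product density of $\mathcal S$ in $\aA\rtimes_\alpha G$.

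For the remaining two implications I would grade everything by $\widehat G$. The algebra $C(G,\aA)$ carries the right\ndash translation action $\beta$, $(\beta_h f)(g):=f(gh)$, by \Star automorphisms for the pointwise product, with isotypic projections $P^\beta_\pi$; by Peter--Weyl the isotypic components span $C(G,\aA)$ densely. The crossed product carries the corresponding canonical (dual) spectral decomposition, whose contractive spectral projections $\Psi_\pi$ refine $\aA\rtimes_\alpha G$ over the same index set $\widehat G$. The space $\mathcal S$ is compatible with both gradings: for $y$ in the isotypic component $A(\pi)$ the function $g\mapsto x\alpha_g(y)$ lands in a single spectral level, the one determined by $\pi$, because its $g$\ndash dependence is governed solely by $\alpha$ restricted to $A(\pi)$, which transforms according to $\pi$. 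The heart of the argument is the identification, at each fixed $\pi$, of the $\pi$\ndash level of $\mathcal S$ with the inner\ndash product data of the generalized isotypic bimodule $A_2(\pi)$, so that ``$\mathcal S$ is full at level $\pi$'' becomes precisely condition~(c) for $\pi$, namely the fullness of $A_2(\pi)$. Granting this, (a)$\Rightarrow$(c) follows by applying the single contractive projection $\Psi_\pi$ to a crossed\ndash product\ndash dense $\mathcal S$ and reading off density at level $\pi$, while (c)$\Rightarrow$(b) follows by Ces\`aro (Fej\'er) summation of the $P^\beta_\pi$, which reassembles supremum\ndash density of $\mathcal S$ from its density on each level.

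The main obstacle is exactly this level\ndash wise identification together with its completion\ndash independence. Concretely, one must set up the intertwining isomorphism relating the isotypic component $A(\pi)\subseteq\aA$, the fixed\ndash point space $A_2(\pi)\subseteq\aA\tensor\End(V)$, and the representation space $V$, and then verify that under it the crossed\ndash product left inner product $g\mapsto x\alpha_g(y^*)$, the Ellwood function $g\mapsto x\alpha_g(y)$, and the bimodule inner product $s^*t\in\aC(\pi)$ all encode the same finite\ndash type datum. Because a single $\pi$\ndash level involves only the finite\ndash dimensional representation theory of $\pi$ together with the module $A(\pi)$, the density statement there is intrinsic and insensitive to whether one measures in the supremum norm or the crossed\ndash product norm; this is what ultimately upgrades the a priori weaker crossed\ndash product density in~(a) to the supremum density in~(b). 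Keeping track of the non\ndash central nature of the spectral projections on the crossed product, and of the passage from level\ndash wise density to global density via summation, is the delicate bookkeeping.
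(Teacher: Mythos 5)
Your proposal has the right skeleton, and its soft parts are correct: (b)$\Rightarrow$(a) via $\norm{f}_{\aA\rtimes_\alpha G}\leq\norm{f}_{L^1}\leq\norm{f}_\infty$ and the density of $C(G,\aA)$ in the crossed product is fine; the graded structure you describe is genuinely there (for $y\in A(\pi)$ one checks $\beta_h\Phi(x\tensor y)=\Phi(x\tensor\alpha_h(y))$, so $\Phi(x\tensor y)$ lies in the $\pi$\ndash isotypic component of right translation, and right translation extends to isometries of $\aA\rtimes_\alpha G$ — it is right multiplication by the unitaries $\lambda_h^*$ in the multiplier algebra — so the contractive spectral projections $\Psi_\pi$ with $\Psi_\pi\Phi(x\tensor y)=\Phi(x\tensor P_\pi(y))$ exist as you assert); and for (c)$\Rightarrow$(b) you do not even need Ces\`aro summation, since Peter--Weyl density of $\bigoplus_\pi C(G,\aA)(\pi)$ already upgrades level\ndash wise density to global density. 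For context: the paper itself gives no proof but cites \cite[Theorem 4.4]{DKY12} for (a)$\Leftrightarrow$(b) and \cite[Lemma 3.1]{GoLaPe94} for (a)$\Leftrightarrow$(c), and your outline is indeed the strategy underlying those references.

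The genuine gap is that what you yourself call ``the heart of the argument'' — the level\ndash wise identification — is stated as a goal and never carried out, and it is precisely the mathematical content of the theorem. Two things are missing. First, the completion\ndash independence at a fixed level: you assert that density at level $\pi$ is ``insensitive'' to whether one uses the supremum norm on $C(G)(\pi)\tensor\aA$ or the crossed\ndash product norm, but this needs an argument — e.g.\ that the canonical map from the sup\ndash norm level into $\aA\rtimes_\alpha G$ has closed range (via a bounded Fourier\ndash coefficient map $\aA\rtimes_\alpha G\to\aA\tensor\End(V_\pi)$, or via the identification of the corner $p_\pi(\aA\rtimes_\alpha G)p_\pi$ with a fixed\ndash point algebra, which is what Gootman--Lazar--Peligrad actually construct) followed by an open\ndash mapping argument. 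Second, and more seriously, the equivalence at fixed $\pi$ between sup\ndash density of $\Phi(\aA\tensor A(\pi))$ in $C(G)(\pi)\tensor\aA$ and fullness $\overline{\Span}\{s^*t \;|\; s,t\in A_2(\pi)\}=\aC(\pi)$ is only easy in one direction: if $\sum_i s_i^*t_i$ approximates $\one$, left multiplication by $\aA\tensor\End(V)$ yields the Ellwood data at level $\pi$; the converse — recovering $\aC(\pi)$\ndash valued inner products from Ellwood\ndash type density — is the substantive step of \cite{DKY12} and does not follow from the bookkeeping you describe. There is also a conjugation subtlety you elide: $y\mapsto y^*$ exchanges $A(\pi)$ and $A(\bar\pi)$, and right fullness of $A_2(\pi)$ over $\aC(\pi)$ corresponds to left fullness at the conjugate representation, so your claim that ``fullness at level $\pi$ becomes precisely condition (c) for $\pi$'' is off by a conjugate and only becomes harmless because (c) quantifies over all of $\widehat{G}$. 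As it stands, then, your text is a correct reduction of the theorem to its known core, not a proof of it.
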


The equivalence between (a) and (b) was proved quite recently in \cite[Theorem 4.4]{DKY12}, although it has been known that these two conditions are closely related to each other (\cf~\cite{Wa10} for the case of compact Lie group actions). A~proof of the equivalence between (a) and (c) can be found in \cite[Lemma 3.1]{GoLaPe94}. 

We now focus our attention on compact Abelian groups. In fact, given a C\Star dynamical system $(\aA,G,\alpha)$ with a unital C\Star algebra $\aA$ and a compact Abelian group $G$, we first note that the definition of the isotypic component $A(\pi)$ corresponding to a character $\pi\in\widehat{G}=\Hom_{\text{gr}}(G,\mathbb{T})$ simplifies to
\begin{align*}
A(\pi)=\{a\in\aA\;|\; \alpha_g(a)=\pi_g\cdot a \quad \text{for all} \quad g\in G\}.
\end{align*}
Moreover, it is easily seen that $A_2(\pi)=A(\pi)$ and that $\aC(\pi)=\aA^G$. The next statement provides equivalent characterizations of a free action in the context of compact Abelian groups. It directly follows from Theorem \ref{equcondsatact} and the previous observations by repeatedly using the fact that $A(-\pi)=A(\pi)^*:=\{a^*\in\aA\;|\,a\in A(\pi)\}$ holds for each $\pi\in\widehat{G}$.

\begin{cor}\label{equcondsatactgrpcomm} 
Let $(\aA,G,\alpha)$ be a C\Star dynamical system with a unital C\Star algebra $\aA$ and a compact Abelian group $G$. Then the following conditions are equivalent:
\begin{itemize}
	\item[\emph{(a)}]
		The C\Star dynamical system $(\aA,G,\alpha)$ is free.
	\item[\emph{(b)}] 
		For each $\pi\in\widehat{G}$ the space $A(\pi)$ is a Morita equivalence $\aA^G-\aA^G$-bimodule. 
	\item[\emph{(c)}] 
		For each $\pi\in\widehat{G}$ the multiplication map on $\aA$ induces an isomorphism  between $A(-\pi)\widehat{\otimes}_{\aA^G}A(\pi)$ and $\aA^G$.
\end{itemize}
\end{cor}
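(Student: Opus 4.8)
The plan is to derive everything from Theorem~\ref{equcondsatact} together with the two special-case identities noted just before the corollary, namely $A_2(\pi)=A(\pi)$ and $\aC(\pi)=\aA^G$ in the Abelian setting. I would prove the cycle of implications $(a)\Rightarrow(b)\Rightarrow(c)\Rightarrow(a)$, using at the first and last step the equivalence of freeness with the spectral condition $\Spec(\alpha)=\widehat G$ from part~(c) of that theorem.

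For $(a)\Rightarrow(b)$: assuming freeness, Theorem~\ref{equcondsatact} gives $\Spec(\alpha)=\widehat G$, which by definition means that for every $\pi\in\widehat G$ the generalized isotypic component $A_2(\pi)$ is a full right $\aA^G\tensor\End(V)-\aC(\pi)$-Hilbert bimodule. Specializing to characters, where $\End(V)=\C$, $A_2(\pi)=A(\pi)$, and $\aC(\pi)=\aA^G$, this says precisely that the right $\aA^G$-valued inner product $\langle s,t\rangle_{\aA^G}=s^*t$ has dense span equal to $\aA^G$, so $A(\pi)$ is a \emph{full} right Hilbert $\aA^G$-module. To upgrade this to a Morita equivalence bimodule I must also check that the left inner product $\langle s,t\rangle=st^*$ is full. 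Here is where I would invoke the identity $A(-\pi)=A(\pi)^*$: applying fullness of the right inner product to the character $-\pi$ and taking adjoints shows that $\Span\{st^*\mid s,t\in A(\pi)\}$ is dense in $\aA^G$ as well. Hence both inner products are full and $A(\pi)$ is a Morita equivalence $\aA^G-\aA^G$-bimodule.

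For $(b)\Rightarrow(c)$: given that each $A(\pi)$ is a Morita equivalence $\aA^G-\aA^G$-bimodule, I would consider the multiplication map $m\colon A(-\pi)\tensor_{\aA^G}A(\pi)\to\aA^G$, $a\tensor b\mapsto ab$ (which indeed lands in $\aA^G=A(0)$ since the characters add). Using $A(-\pi)=A(\pi)^*$ one identifies $m(a^*\tensor b)=a^*b=\langle a,b\rangle_{\aA^G}$, so $m$ is exactly the map implementing the left/right inner-product pairing. The image of $m$ is the span of these inner products, which by fullness is dense, and compatibility of $m$ with both bimodule inner products is a direct check from the tensor-product formulas recalled in the preliminaries. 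Since a surjective isometric bimodule map between Morita equivalence bimodules is an isomorphism, and the completion $A(-\pi)\widehat\otimes_{\aA^G}A(\pi)$ is again such a bimodule by the closing remark of Section~\ref{sec:prel}, I conclude $m$ extends to the desired isomorphism onto $\aA^G$. For $(c)\Rightarrow(a)$, I simply run this backwards: an isomorphism $A(-\pi)\widehat\otimes_{\aA^G}A(\pi)\cong\aA^G$ forces the inner product $\langle\cdot,\cdot\rangle_{\aA^G}$ on each $A(\pi)$ to be full, which is the spectral condition $\Spec(\alpha)=\widehat G$, and freeness follows from Theorem~\ref{equcondsatact}.

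The main obstacle I anticipate is bookkeeping rather than conceptual: carefully verifying that the multiplication map $m$ is isometric for the Hilbert-module norms and respects \emph{both} inner products, so that it genuinely qualifies as an isomorphism of Morita equivalence bimodules and not merely a dense-range contraction. The formulas in the preliminaries make this essentially mechanical, but one must be attentive to the correct placement of adjoints and to the role of the identification $A(-\pi)=A(\pi)^*$, which is what converts the right $\aA^G$-module structure on $A(-\pi)$ into the data needed on the left. Everything else is a direct translation of the general theorem to the Abelian situation.
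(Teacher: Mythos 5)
Your proposal is correct and takes essentially the same approach as the paper, whose proof is just the remark that the corollary "directly follows from Theorem~\ref{equcondsatact} and the previous observations by repeatedly using the fact that $A(-\pi)=A(\pi)^*$" --- i.e.\ precisely your combination of the spectral condition $\text{Sp}(\alpha)=\widehat{G}$ with the identifications $A_2(\pi)=A(\pi)$, $\aC(\pi)=\aA^G$ and the adjoint identity. Your write-up simply supplies the details the paper leaves implicit (fullness of both inner products via $A(-\pi)=A(\pi)^*$, and the check that the multiplication map preserves both inner products and hence extends to an isomorphism).
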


As we will see soon, Corollary \ref{equcondsatactgrpcomm} gives rise to a first invariant for free actions of compact Abelian groups. For the time being, we continue with some examples to get more comfortable with free actions of compact Abelian groups.

\begin{expl}\label{expl sat act a}
Let $\theta$ be a real skew-symmetric $n\times n$ matrix. The \emph{noncommutative $n$-torus} $\mathbb{T}^n_{\theta}$ is the universal unital C\Star algebra generated by unitaries $U_1,\ldots,U_n$ with
$$U_rU_s=\exp(2\pi i\theta_{rs})U_sU_r \quad \text{for all} \quad 1\leq r,s\leq n.$$
It carries a continuous action $\alpha^n_{\theta}$ of the $n$-dimensional torus $\mathbb{T}^n$ by algebra automorphisms which is on generators defined by 
\begin{align*}
\alpha^n_{\theta,z}\bigr(U^{\mathbf k}\bigl):=z^{\mathbf k}\cdot U^{\mathbf k},
\end{align*}
where $z^{\mathbf k}:=z^{k_1}_1\cdots z^{k_n}_n$ and $U^{\mathbf k}:=U^{k_1}_1\cdots U^{k_n}_n$ for $z=(z_1,\ldots,z_n)\in\mathbb{T}^n$ and ${\mathbf k}:=(k_1,\ldots,k_n)\in\mathbb{Z}^n$. The isotypic component $(\mathbb{T}^n_{\theta})(\mathbf k)$ corresponding to the character ${\mathbf k}\in\mathbb{Z}^n$ is given by $\mathbb{C}\cdot U^{\mathbf k}$. In particular, each isotypic component contains invertible elements from which we conclude that the C\Star dynamical system $(\mathbb{T}^n_{\theta},\mathbb{T}^n,\alpha^n_{\theta})$ is free.
\end{expl}

\begin{expl}\label{expl sat act b}
	Let $H$ be the discrete, 3-dimensional Heisenberg group and let $C^*(H)$ denote its group C\Star algebra. Then $C^*(H)$ is the universal C\Star algebra 
	generated by unitaries $U$, $V$ and $W$ satisfying
	\[UW=WU, \quad VW=WV \quad \text{and} \quad UV=WVU.
	\]
	It carries a continuous action $\alpha$ of the 2-dimensional torus $\mathbb{T}^2$ by algebra automorphisms which is on generators defined by 
	\[
	\alpha_{(z,w)}(U^kV^lW^m):=z^kw^l\cdot U^kV^lW^m,
	\]where $(z,w)\in\mathbb{T}$ and $k,l,m\in\mathbb{Z}$. The corresponding fixed point algebra $\aB$ is the centre of $C^{*}(H)$ which is equal to the group $C^{*}$-algebra $C^{*}(Z)$ of the center $Z\cong\mathbb{Z}$ of $H$. Moreover, the isotypic component $C^{*}(H)_{(k,l)}$ corresponding to the character $(k,l)\in\mathbb{Z}^2$ is given by $\aB\cdot U^kV^l$. In particular, each isotypic component $C^{*}(H)_{(k,l)}$ contains invertible elements from which we conclude that the C\Star dynamical system $(C^{*}(H),\mathbb{T}^2,\alpha)$ is free. We point out that $C^{*}(H)$ serves as a ``universal'' noncommutative principal $\mathbb{T}^2$-bundle in \cite{ENOO09} and that its $K$-groups are isomorphic to $\mathbb{Z}^3$.
\end{expl}

\begin{expl}\label{expl sat act c}
For $q\in [-1,1]$ consider the C\Star algebra $\SU_q(2)$ from \cite{Wo87}.
We recall that it is the universal C\Star algebra generated by two elements $a$ and $c$ subject to the five relations
\begin{align*}
a^*a+cc^*=1, \quad aa^*+q^2cc^*=1, \quad cc^*=c^*c, \quad ac=qca \quad \text{and} \quad ac^*=qc^*a.
\end{align*}
It carries a continuous action $\alpha$ of the one-dimensional torus $\mathbb{T}$ by algebra automorphisms, which is on the generators defined by
\begin{align*}
\alpha_z(a):=z\cdot a \quad \text{and} \quad \alpha_z(c):=z\cdot c, \quad z\in\mathbb{T}.
\end{align*}
The fixed point algebra of this action is the \emph{quantum 2-sphere} $S^2_q$ and we call the corresponding C\Star dynamical system $(\SU_q(2),\mathbb{T},\alpha)$ the \emph{quantum Hopf fibration}. It is free according to \cite[Corollary 3]{Szy03}. In fact, the author shows that if $E$ is a locally finite graph with no sources and no sinks, then the natural gauge action on the graph C\Star algebra $C^*(E)$ is free. 
\end{expl}

\begin{rmk}\label{tncpb rmk}
	We recall that Example \ref{expl sat act a} and Example \ref{expl sat act b} are special cases of so-called \emph{trivial noncommutative principal bundles} as discussed in \cite{Wa11a, Wa11b,Wa11e}. In fact, it~is not hard to see that each trivial noncommutative principal bundle is free (\cf Remark~\ref{inv elts} and \cite{SchWa15}).
\end{rmk}

\begin{rmk}
Let $\mathbb{K}$ be the algebra of compact operators on some separable Hilbert space. The C\Star algebra $\SU_q(2)$ is described in \cite{Do80} as an extension of $C(\mathbb{T})$ by $C(\mathbb{T})\otimes\mathbb{K}$, \ie, by a short exact sequence 
\begin{align}
0\longrightarrow C(\mathbb{T})\otimes\mathbb{K}\longrightarrow\SU_q(2)\rightarrow C(\mathbb{T})\longrightarrow 0\label{su2 seq 1}
\end{align}
of C\Star algebras. If we consider $C(\mathbb{T})$ endowed with the canonical $\mathbb{T}$-action induced by right-translation, then a few moments thought shows that the sequence (\ref{su2 seq 1}) is in fact $\mathbb{T}$\ndash equivariant. In particular, it induces the following short exact sequence of C\Star algebras:
\begin{gather}
0\longrightarrow (C(\mathbb{T})\otimes\mathbb{K})\rtimes\mathbb{T}\longrightarrow\SU_q(2)\rtimes_{\alpha}\mathbb{T}\longrightarrow C(\mathbb{T})\rtimes\mathbb{T}\longrightarrow 0.\label{su2 seq 2}
\shortintertext{Since}
(C(\mathbb{T})\otimes\mathbb{K})\rtimes\mathbb{T}\cong(C(\mathbb{T})\rtimes\mathbb{T})\otimes\mathbb{K}\cong\mathbb{K}\otimes\mathbb{K}\cong\mathbb{K}\notag
\end{gather}
and $C(\mathbb{T})\rtimes\mathbb{T}\cong\mathbb{K}$ by the well-known Stone-von Neumann Theorem, we conclude from \cite[Proposition 6.12]{Ro04} that the crossed product $\SU_q(2)\rtimes_{\alpha}\mathbb{T}$ is stable. Moreover, the fact that the C\Star dynamical system $(\SU_q(2),\mathbb{T},\alpha)$ is free implies that the crossed product $\SU_q(2)\rtimes\mathbb{T}$ is Morita equivalent to the corresponding fixed-point algebra $S^2_q$ and thus they are also stably isomorphic by a famous result of Brown, Green and Rieffel (\cf \cite[Theorem 1.2]{BrGrRi77} or \cite[Section 5.5]{ReWi98}), \ie, $\SU_q(2)\rtimes\mathbb{T}\cong S^2_q\otimes\mathbb{K}$. The previous result affirms a question of the second author in the context of a notion of freeness which is related to \mbox{Green's} Theorem (\cf \cite[Corollary 15]{Green77} and \cite{ENOO09}). 
\end{rmk}

In the remaining part of this section we provide some methods to construct new free actions from given ones. As an application, we present a one-parameter family of free $\SU(2)$-actions which are related to the Connes--Landi spheres. 

\begin{prop}\label{restricted free}
Let $(\aA,G,\alpha)$ be a C\Star dynamical system with a unital C\Star algebra $\aA$ and a compact group $G$.
If $(\aA,G,\alpha)$ is free and $H$ a closed subgroup of $G$, then also the restricted C\Star dynamical system $(\aA,H,\alpha_{\mid H})$ is free.
\end{prop}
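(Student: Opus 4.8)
The plan is to prove the assertion through the Ellwood condition of Definition~\ref{ellwood}, exploiting the equivalence (a)$\Leftrightarrow$(b) of Theorem~\ref{equcondsatact}. Since $(\aA,G,\alpha)$ is free, it satisfies the Ellwood condition, so it suffices to deduce that the restricted system $(\aA,H,\alpha_{\mid H})$ does as well; freeness of the latter then follows from Theorem~\ref{equcondsatact} once more. The entire point is that restriction to a subgroup is a completely natural operation on the function spaces $C(G,\aA)$ appearing in the Ellwood map, so this characterization is by far the most convenient of the three.

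To set this up I would write $\Phi_G\colon\aA\otimes\aA\to C(G,\aA)$ and $\Phi_H\colon\aA\otimes\aA\to C(H,\aA)$ for the Ellwood maps of the two systems, so that $\Phi_G(x\otimes y)(g)=x\alpha_g(y)$ and $\Phi_H(x\otimes y)(h)=x\alpha_h(y)$, and introduce the restriction \Star homomorphism $\rho\colon C(G,\aA)\to C(H,\aA)$, $\rho(f):=f_{\mid H}$. The key (and essentially only) structural observation is the factorization $\Phi_H=\rho\circ\Phi_G$, which is immediate because both sides send $x\otimes y$ to the function $h\mapsto x\alpha_h(y)$ on $H$.

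The step requiring genuine input is surjectivity of $\rho$, and this is where I expect the only real obstacle to lie. Since $G$ is a compact Hausdorff, hence normal, space and $H\subseteq G$ is closed, every continuous $\aA$-valued function on $H$ extends to a continuous $\aA$-valued function on $G$ by the Banach-space-valued Tietze (Dugundji) extension theorem; this is precisely the surjectivity of $\rho$. Dense range of $\rho$ would already suffice for the conclusion and could alternatively be produced by a Stone--Weierstrass argument applied to the elementary tensors $f\cdot a$ with $f\in C(G)$ and $a\in\aA$, but invoking the vector-valued extension theorem is the cleanest route.

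Granting surjectivity, the conclusion is a short diagram chase: as $\rho$ is continuous and $\Phi_G$ has dense range,
\begin{equation*}
C(H,\aA)=\rho\bigl(C(G,\aA)\bigr)=\rho\bigl(\clos\im\Phi_G\bigr)\subseteq\clos\,\rho(\im\Phi_G)=\clos\im\Phi_H ,
\end{equation*}
so that $\Phi_H$ has dense range, i.e.\ $(\aA,H,\alpha_{\mid H})$ satisfies the Ellwood condition and is therefore free. In summary, everything reduces to the surjectivity (or dense range) of the restriction map $\rho$; the rest of the argument is purely formal.
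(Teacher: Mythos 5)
Your proof is correct and follows essentially the same route as the paper: both verify the Ellwood condition for $(\aA,H,\alpha_{\mid H})$ by factoring the Ellwood map of the restricted system through the surjective restriction \Star homomorphism $C(G,\aA)\to C(H,\aA)$, the only difference being that you justify this surjectivity (which the paper simply asserts). One small caution: Dugundji's extension theorem assumes a metrizable domain, so for non-metrizable $G$ you should either invoke Dowker's theorem for collectionwise normal (e.\,g.\ compact) spaces, or use your fallback argument that restrictions of $C(G)\otimes\aA$ are dense in $C(H,\aA)$ by the scalar Tietze theorem, which already suffices.
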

\begin{proof}
Since $(\aA,G,\alpha)$ satisfies the Ellwood condition, the surjectivity of the restriction map $C(G,\aA)\rightarrow C(H,\aA)$, $f\mapsto f_{\mid H}$ implies that the map
\begin{align*}
\Phi:\aA\otimes_{\text{alg}}\aA\rightarrow C(H,\aA), \quad \Phi(x\otimes y)(h):=x\alpha_h(y)
\end{align*}
has dense range. That is, $(\aA,H,\alpha_{\mid H})$ also satisfies the Ellwood condition.
\end{proof}

\begin{prop}\label{normal subgroup}
Let $(\aA,G,\alpha)$ be a C\Star dynamical system with a unital C\Star algebra $\aA$ and a compact group $G$.
If $(\aA,G,\alpha)$ is free and $N$ a closed normal subgroup of $G$, then also the induced C\Star dynamical system
$(\aA^N,G/N,\alpha_{\mid G/N})$ is free.
\end{prop}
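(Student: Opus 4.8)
The plan is to verify the Ellwood condition for the induced system and then invoke the equivalence (a)$\Leftrightarrow$(b) of Theorem~\ref{equcondsatact}. First I would record the basic structure of the induced action. For $g\in G$ and $a\in\aA^N$ one has $\alpha_g(a)\in\aA^N$: indeed, for $n\in N$ normality gives $g^{-1}ng\in N$, whence $\alpha_n(\alpha_g(a))=\alpha_g(\alpha_{g^{-1}ng}(a))=\alpha_g(a)$. Moreover $\alpha_{gn}(a)=\alpha_g(\alpha_n(a))=\alpha_g(a)$ for $a\in\aA^N$, so $\beta_{gN}(a):=\alpha_g(a)$ is a well-defined continuous action $\beta$ of $G/N$ on $\aA^N$ by \Star automorphisms. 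By Theorem~\ref{equcondsatact} it then suffices to show that the Ellwood map
\[
\Psi:\aA^N\otimes\aA^N\to C(G/N,\aA^N),\qquad \Psi(u\otimes v)(gN):=u\,\alpha_g(v),
\]
has dense range. Throughout I keep in mind the identification of $C(G/N,\aA^N)$ with the continuous $\aA^N$-valued functions $F$ on $G$ that are \emph{bi-invariant}, i.e.\ $F(n_1gn_2)=F(g)$ for $n_1,n_2\in N$; normality guarantees that every pullback of a function on $G/N$ is bi-invariant, because $n\cdot(gN)=(ng)N=gN$.

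The technical heart is an averaging operator converting the Ellwood data of $(\aA,G,\alpha)$ into that of the induced system. Writing $E(a):=\int_N\alpha_n(a)\,dn$ for the conditional expectation onto $\aA^N$, I would introduce the two contractions $T,S$ on $C(G,\aA)$ given by $(TF)(g):=\int_N\alpha_n\bigl(F(n^{-1}g)\bigr)\,dn$ and $(SF)(g):=\int_N F(gn)\,dn$, and set $R:=S\circ T$. A direct computation on a generator $F(g)=x\,\alpha_g(y)$ of the range of the full Ellwood map $\Phi$ gives $(TF)(g)=\int_N\alpha_n(x)\,\alpha_{nn^{-1}g}(y)\,dn=E(x)\,\alpha_g(y)$ and hence $(RF)(g)=E(x)\,\alpha_g\bigl(E(y)\bigr)$; since $E(x),E(y)\in\aA^N$, this is exactly $\Psi\bigl(E(x)\otimes E(y)\bigr)$. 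By linearity $R$ maps the range of $\Phi$ into the range of $\Psi$.

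Conversely, if $F$ is bi-invariant with values in $\aA^N$, then $\alpha_n$ acts trivially on its values, and left- resp.\ right-invariance yield $(TF)(g)=\int_N F(n^{-1}g)\,dn=F(g)$ and $(SF)(g)=\int_N F(gn)\,dn=F(g)$; thus $R$ fixes $C(G/N,\aA^N)$ pointwise. With these two properties the conclusion is immediate: given $F\in C(G/N,\aA^N)$, view its pullback $\tilde F$, $\tilde F(g):=F(gN)$, as a bi-invariant element of $C(G,\aA)$. Freeness of $(\aA,G,\alpha)$ together with Theorem~\ref{equcondsatact} supplies $h$ in the range of $\Phi$ with $\norm{h-\tilde F}_\infty<\varepsilon$; then $Rh$ lies in the range of $\Psi$, and since $R$ is contractive and $R\tilde F=\tilde F$ we obtain $\norm{Rh-\tilde F}_\infty=\norm{R(h-\tilde F)}_\infty<\varepsilon$. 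Hence $\tilde F$, and therefore $F$, lies in the closure of the range of $\Psi$, so $\Psi$ has dense range and the induced system is free.

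I expect the main obstacle to be not any single estimate but the correct design of the operators $T$ and $S$: the factor $\alpha_n$ combined with the argument shift $n^{-1}g$ in $T$ is precisely what collapses $\alpha_{nn^{-1}g}$ back to $\alpha_g$ and produces a clean Ellwood generator with coefficients pushed into $\aA^N$, while $S$ simultaneously projects the second variable and enforces right $N$-invariance. Every appeal to normality---that $\alpha_g$ preserves $\aA^N$, that pullbacks from $G/N$ are bi-invariant, and that $R$ restricts to the identity on the target subspace---must be tracked carefully, as these are exactly the points where the hypothesis that $N$ is normal (rather than merely closed) enters.
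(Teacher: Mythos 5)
Your proof is correct and takes essentially the same route as the paper: there, freeness is transferred via the Ellwood condition by identifying $C(G/N,\aA^N)$ with the fixed-point subalgebra of the $N\times N$-action $\alpha\tensor\mathrm{rt}$ on $C(G,\aA)$ and showing that the associated conditional expectation $P_{N\times N}$ satisfies $P_{N\times N}(\Phi(x\tensor y))=\Phi\bigl(P_N(x)\tensor P_N(y)\bigr)$. Your operator $R=S\circ T$ is exactly this conditional expectation (substitute $n_2\mapsto (g^{-1}n_1 g)\,n_2$ in the double integral, using normality and Haar invariance), so your argument matches the paper's up to presentation, with the final contractivity-plus-fixed-subspace density step merely spelled out explicitly.
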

\begin{proof}
Since $(\aA,G,\alpha)$ satisfies the Ellwood condition, the map
\begin{align*}
\Phi:\aA\otimes_{\text{alg}}\aA\rightarrow C(G,\aA), \quad \Phi(x\otimes y)(g):=x\alpha_g(y)
\end{align*}
has dense range. Moreover, the C\Star algebra $C(G/N, \aA^N)$ is naturally identified with functions in $C(G,\aA)$ satisfying $f(g) = \alpha_{n1}( f(gn_2))$ for all $g\in G$ and $n_1,n_2\in N$. In~other words, $C(G/N, \aA^N)$ is the fixed point algebra of the action $\alpha\tensor\text{rt}$ of $N\times N$ on $C(G)\otimes\aA=C(G,\aA)$, where $\text{rt}:N\times C(G)\rightarrow C(G)$, $\text{rt}(n,f)(g):=f(gn)$ denotes the right-translation action by $N$. Let $P_N:\aA\rightarrow\aA$ and $P_{N \times N}:C(G,\aA) \rightarrow C(G,\aA)$ be the conditional expectations for the actions $\alpha_{\mid N}$ and $\alpha\tensor\text{rt}$, respectively. Then we obtain for arbitrary $x,y\in\aA$
\begin{align*}
\Phi( P_N(x) \tensor P_N(y) )
	&= \int_{N \times N} \alpha_{n_1}(x) \alpha_{g n_2}(y) \;dn_1dn_2 = \int_{N \times N} \alpha_{n_1}(x) \alpha_{n_2 g}(y) \;dn_1dn_2
	\\
	&= \int_{N \times N} \alpha_{n_1}( x \alpha_{n_2 g}(y) ) \;dn_1dn_2 = \int_{N \times N} \alpha_{n_1}( x \alpha_{g n_2}(y) ) \;dn_1dn_2
	\\
	&=P_{N \times N}( \Phi(x \tensor y)).
\end{align*}
It follows that the restricted map
\begin{align*}
\Phi_{\mid \aA^N\otimes_{\text{alg}}\aA^N}:\aA^N\otimes_{\text{alg}}\aA^N\rightarrow C(G,A), \quad \Phi(x\otimes y)(g):=x\alpha_g(y)
\end{align*}
has dense range in the C\Star subalgebra $C(G/N,\aA^N)$. That is, $(\aA^N,G/N,\alpha_{\mid G/N})$ also satisfies the Ellwood condition. 
\end{proof}

\begin{prop}	\label{newsatact B}
	Let $(\aA, G, \alpha)$ and $(\aC, H, \gamma)$ be C\Star dynamical systems with unital C\Star algebras $\aA$, $\aC$ and compact groups $G$, $H$. If $(\aA, G, \alpha)$ and $(\aC, H, \gamma)$ are free, then also their tensor product $(\aA \tensor \aC, G \times H, \alpha \tensor \gamma)$ is free.
\end{prop}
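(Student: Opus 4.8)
The plan is to verify the Ellwood condition for the product system and then invoke the equivalence of conditions \emph{(a)} and \emph{(b)} in Theorem~\ref{equcondsatact}. Denoting by $\Phi_\aA$ and $\Phi_\aC$ the Ellwood maps of the two given systems, I would first record the Ellwood map of the product system,
\begin{align*}
	\Psi : (\aA \tensor_{\min} \aC) \tensor (\aA \tensor_{\min} \aC) \to C(G \times H, \aA \tensor_{\min} \aC),
\end{align*}
which on elementary tensors is given by
\begin{align*}
	\Psi\bigl( (a_1 \tensor c_1) \tensor (a_2 \tensor c_2) \bigr)(g,h) = a_1\alpha_g(a_2) \tensor c_1\gamma_h(c_2).
\end{align*}
By Theorem~\ref{equcondsatact} it then suffices to show that $\Psi$ has dense range.

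The decisive step is to identify the target C\Star algebra. Since $G$ and $H$ are compact and $\aA$, $\aC$ are unital, one has the natural identifications $C(G,\aA) \cong C(G) \tensor_{\min} \aA$ and $C(H,\aC) \cong C(H) \tensor_{\min} \aC$, and the commutativity, hence nuclearity, of $C(G)$ and $C(H)$ renders the occurring minimal tensor products unambiguous. Flipping the two middle factors therefore yields a natural isomorphism
\begin{align*}
	C(G,\aA) \tensor_{\min} C(H,\aC) \;\cong\; C(G) \tensor_{\min} C(H) \tensor_{\min} \aA \tensor_{\min} \aC \;\cong\; C(G \times H, \aA \tensor_{\min} \aC).
\end{align*}
Under this isomorphism, for $F \in \im\Phi_\aA$ and $F' \in \im\Phi_\aC$ the elementary tensor $F \tensor F'$, viewed as the function $(g,h) \mapsto F(g) \tensor F'(h)$, lies in the range of $\Psi$: choosing representations $F = \Phi_\aA(\sum_i x_i \tensor y_i)$ and $F' = \Phi_\aC(\sum_j u_j \tensor v_j)$, it is precisely $\Psi\bigl(\sum_{i,j}(x_i \tensor u_j)\tensor(y_i \tensor v_j)\bigr)$. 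Hence $\im\Phi_\aA \tensor \im\Phi_\aC \subseteq \im\Psi$.

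Because both given systems are free, Theorem~\ref{equcondsatact} guarantees that $\im\Phi_\aA$ is dense in $C(G,\aA)$ and $\im\Phi_\aC$ is dense in $C(H,\aC)$. I would then invoke the standard fact that the algebraic tensor product of two dense subspaces is dense in the minimal tensor product, which rests only on the cross-norm identity $\norm{x \tensor y} = \norm{x}\,\norm{y}$ for elementary tensors in the spatial tensor product. Consequently $\im\Phi_\aA \tensor \im\Phi_\aC$, and therefore $\im\Psi$, is dense in $C(G \times H, \aA \tensor_{\min}\aC)$, so that the product system satisfies the Ellwood condition and is free. I expect the only point demanding real care to be the bookkeeping of algebraic versus completed tensor products: the domains of all three Ellwood maps are algebraic tensor products, whereas the target and the density argument live in completed minimal tensor products, so the rearrangement of factors and the final approximation must be carried out in the correct topology.
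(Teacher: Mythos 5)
Your proposal is correct and takes essentially the same route as the paper: the paper likewise verifies the Ellwood condition by observing that the Ellwood map of the product system is, up to a permutation of the tensor factors, an amplification (tensor product) of the Ellwood maps of $(\aA,G,\alpha)$ and $(\aC,H,\gamma)$, so dense range is inherited. Your write-up merely makes explicit the flip isomorphism $C(G,\aA)\tensor_{\min}C(H,\aC)\cong C(G\times H,\aA\tensor_{\min}\aC)$ and the standard fact that the algebraic tensor product of dense subspaces is dense in the minimal tensor product, both of which the paper's one-sentence argument leaves implicit.
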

\begin{proof}
We first note that the map
\begin{gather*}
	\Phi:\aA\otimes_{\text{alg}}\aC\otimes_{\text{alg}}\aA\otimes_{\text{alg}}\aC\rightarrow C(G\times H,\aA\otimes \aC), 
	\\ 
	\Phi(x\otimes y\otimes u\otimes v)(h) := x\alpha_h(y)\otimes u\gamma_h(v)
\end{gather*}
is, up to a permutation of the tensor factors, an amplification of the corresponding maps induced by $(\aA,G,\alpha)$ and $(\aC,H,\gamma)$. Therefore, $(\aA\otimes\aC,G\times H,\alpha\otimes\gamma)$ inherits the Ellwood condition from $(\aA,G,\alpha)$ and $(\aC,H,\gamma)$.
\end{proof}

\begin{rmk}\label{another new free action}
	Suppose that $(\aA,G,\alpha)$ is a free C\Star dynamical system with a unital C\Star al\-ge\-bra~$\aA$ and a compact group $G$. Furthermore, let $\aC$ be an arbitrary unital C\Star algebra. Then Proposition \ref{newsatact B} applied to the trivial group $H$ implies that the C\Star dynamical system $(\aA\otimes\aC,G,\alpha\otimes\id_{\aC})$ is free. More generally, if $(\aC,G,\gamma)$ is an arbitrary C\Star dynamical system, it is not hard to check that the C\Star dynamical system $(\aA\otimes\aC,G,\alpha\otimes\gamma)$ satisfies the Ellwood condition, \ie, $(\aA\otimes\aC,G,\alpha\otimes\gamma)$ is free. This observation corresponds in the classical setting to the situation of endowing the cartesian product of a free and compact $G$-space $X$ and any compact $G$-space $Y$ with the free diagonal action of $G$.
\end{rmk}


\begin{thm}\label{newsatact}
	Let $(\aA,G,\alpha)$ and $(\aC,H,\gamma)$ be free C\Star dynamical systems with unital C\Star algebras $\aA$, $\aC$ and compact groups $G$, $H$. Furthermore, let $(\aA,H,\beta)$ be any another C\Star dynamical system such that the actions $\alpha$ and $\beta$ commute. Then the following assertions hold:
	\begin{itemize}
	\item[\emph{(}a\emph{)}]
	The C\Star dynamical system $\bigr((\aA\otimes\aC),G\times H,(\alpha\circ\beta)\otimes\gamma\bigl)$ is free.
	\item[\emph{(}b\emph{)}]
		The C\Star dynamical system $\bigr((\aA\otimes\aC)^H,G,\alpha\otimes\id_{\aC}\bigl)$ is free, where the fixed space $(\aA \tensor \aC)^H$ is taken with respect to the tensor product action $\beta \tensor \gamma$ of~$H$.
	\end{itemize}
\end{thm}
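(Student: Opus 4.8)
The plan is to verify the Ellwood condition throughout, which by Theorem~\ref{equcondsatact} is equivalent to freeness, and to establish~(a) first; part~(b) will then follow formally. Because $\alpha$ and $\beta$ commute, the $G\times H$-action in~(a) is $(g,h)\mapsto(\alpha_g\beta_h)\otimes\gamma_h$, so the associated Ellwood map sends $(a_1\otimes c_1)\otimes(a_2\otimes c_2)$ to the function $(g,h)\mapsto a_1\alpha_g\beta_h(a_2)\otimes c_1\gamma_h(c_2)$, and I must show that the span of all these functions is dense in $C(G\times H,\aA\otimes_{\min}\aC)$. Since the elementary tensors $f\otimes\ell\otimes a\otimes c$ (with $f\in C(G)$, $\ell\in C(H)$, $a\in\aA$, $c\in\aC$) span a dense subspace and each has the form $(g,h)\mapsto F(g,h)\otimes c$ with $F\in C(G\times H,\aA)$ and $c\in\aC$ constant, it suffices to approximate functions of this latter form.

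The first key device is a \emph{product-tensor} trick: feeding the Ellwood map the element $\sum_{i,j}(a_1^{(i)}\otimes c_1^{(j)})\otimes(a_2^{(i)}\otimes c_2^{(j)})$ produces the function $(g,h)\mapsto A(g,h)\otimes C(h)$, where $A(g,h)=\sum_i a_1^{(i)}\alpha_g\beta_h(a_2^{(i)})$ and $C(h)=\sum_j c_1^{(j)}\gamma_h(c_2^{(j)})$. Hence the range of the Ellwood map contains every product $A\otimes C$ with $A$ in the ($\beta$-twisted) range of the Ellwood map of $(\aA,G\times H,\alpha\circ\beta)$ and $C$ in that of $(\aC,H,\gamma)$; by freeness of $(\aC,H,\gamma)$ the latter is dense in $C(H,\aC)$. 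I emphasise that one cannot merely invoke freeness of the twisted system $(\aA,G\times H,\alpha\circ\beta)$, for two commuting free actions need not assemble into a free action of the product group (for instance two rotation actions on a single circle); the freeness of $\gamma$ is therefore genuinely needed, and it enters precisely by allowing the $\aC$-leg to carry an arbitrary continuous function of $h$.

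The heart of the argument exploits this. Given a target $F\otimes c$, I choose a partition of unity $\{\chi_j\}$ on $H$ subordinate to a cover by small neighbourhoods $U_j\ni h_j$ on which $\beta_h$ is uniformly close to $\beta_{h_j}$ along the compact orbit $\{\alpha_g(a_2):g\in G\}$. On $\supp\chi_j$ the $\aA$-leg satisfies $A(g,h)\approx\sum a_1\alpha_g\beta_{h_j}(a_2)=\sum a_1\alpha_g(\beta_{h_j}(a_2))$ by commutativity, and since $\beta_{h_j}$ is bijective, freeness of $(\aA,G,\alpha)$ lets these terms approximate an arbitrary $\psi_j\in C(G,\aA)$. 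Pairing with $C(h)=\chi_j(h)c$ (realisable through $\gamma$-freeness) via the product-tensor trick yields $(g,h)\mapsto\chi_j(h)\psi_j(g)\otimes c$ up to small error; choosing $\psi_j(g):=F(g,h_j)$ and summing over $j$, the partition of unity reassembles $\sum_j\chi_j(h)F(g,h_j)\otimes c$, which approximates $F\otimes c$ uniformly by uniform continuity of $F$ in the $H$-variable. This proves~(a).

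The main obstacle is the localisation estimate of the third paragraph: one must untwist the $h$-dependent automorphism $\beta_h$ to the constant $\beta_{h_j}$ \emph{uniformly in $g$}, which rests on uniform continuity of $\beta$ on the compact set $\{\alpha_g(a_2):g\in G\}$ and on refining the cover so that the partition-of-unity error, the $\alpha$-freeness error, and the localisation error each stay below a prescribed $\varepsilon$. Finally, part~(b) is a formal consequence of~(a): the subgroup $N:=\{e_G\}\times H$ is closed and normal in $G\times H$, its fixed-point algebra for $(\alpha\circ\beta)\otimes\gamma$ is exactly $(\aA\otimes_{\min}\aC)^H$ (the fixed points of $\beta\otimes\gamma$), the quotient $(G\times H)/N$ is canonically $G$, and the induced action is $\alpha\otimes\id_\aC$; thus Proposition~\ref{normal subgroup}, applied to the free system furnished by~(a), yields freeness of $\bigl((\aA\otimes_{\min}\aC)^H,G,\alpha\otimes\id_\aC\bigr)$.
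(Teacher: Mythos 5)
Your overall architecture matches the paper's: both prove (a) by verifying the Ellwood condition, using freeness of $(\aA,G,\alpha)$ and of $(\aC,H,\gamma)$, and both deduce (b) by applying Proposition~\ref{normal subgroup} to the system of part (a) with the closed normal subgroup $\{e_G\}\times H$ of $G\times H$; your part (b) is exactly the paper's. For part (a), however, your density argument is genuinely different. The paper factors the Ellwood map of the product system as an amplified composition $\Phi=(\id_{C(H)}\otimes\Phi_1)\circ\Phi_2$, where $\Phi_2\bigl((x_1\otimes y_1)\otimes(x_2\otimes y_2)\bigr)(h)=x_1\otimes\beta_h(x_2)\otimes y_1\gamma_h(y_2)$ has dense range thanks to freeness of $\gamma$ (the twist $\beta_h$ is harmless there because the two $\aA$-legs are kept in \emph{separate} tensor factors, so it can be undone by the fibrewise automorphism $\id\otimes\beta_h\otimes\id$ of $C(H,\cdot)$), and $\Phi_1(x_1\otimes x_2\otimes y)(g)=x_1\alpha_g(x_2)\otimes y$ is the amplified Ellwood map of $\alpha$. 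You instead attack the twist head-on: localize in $h$ with a partition of unity, replace $\beta_h$ by the constant automorphism $\beta_{h_j}$, absorb $\beta_{h_j}^{-1}$ into the $\aA$-side data (using bijectivity of $\beta_{h_j}$), let the $\aC$-leg carry the bump functions via freeness of $\gamma$, and reassemble. Your route is longer but more elementary and entirely quantitative: it works with finite sums and sup-norm estimates, sidestepping the question of which completion $C(H,\aA\otimes\aA\otimes\aC)$ carries and whether the amplified map $\id_{C(H)}\otimes\Phi_1$ is continuous on it, which the paper's composition step leaves implicit. Your remark that one cannot simply invoke freeness of $(\aA,G\times H,\alpha\circ\beta)$ (rotations on a circle) correctly isolates why $\gamma$-freeness is indispensable.

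One repair is needed in your localization step, and it is a quantifier-order issue rather than a missing idea. As written, you choose the cover $\{U_j\ni h_j\}$ so that $\beta_h\approx\beta_{h_j}$ on the compact set $\{\alpha_g(a_2):g\in G\}$ --- but the elements $a_2=a_2^{(i,j)}$ are produced only \emph{afterwards}, by applying freeness of $\alpha$ to $\psi_j=F(\cdot,h_j)$, which itself depends on the cover. ``Refining the cover'' does not break this circle: refining changes the points $h_j$, hence the elements $a_2^{(i,j)}$, hence the required fineness; and shrinking each $U_j$ after the elements are chosen may destroy the covering property. The standard fix stays inside your argument: for each point $h_0\in H$ first choose, by freeness of $\alpha$, finitely many elements with $\sup_g\bigl\lVert\sum_i a_1^{(i)}\alpha_g(\tilde a_2^{(i)})-F(g,h_0)\bigr\rVert<\varepsilon$ and set $a_2^{(i)}:=\beta_{h_0}^{-1}(\tilde a_2^{(i)})$; \emph{then} choose a neighbourhood $W_{h_0}$ of $h_0$ on which both the twist error $\max_i\lVert\beta_h(a_2^{(i)})-\beta_{h_0}(a_2^{(i)})\rVert$ and the oscillation $\sup_g\lVert F(g,h)-F(g,h_0)\rVert$ are small; finally use compactness of $H$ to extract a finite subcover $\{W_{h_j}\}$ and take the partition of unity subordinate to \emph{it}. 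With this reordering all three of your error terms are controlled and the proof of (a) closes up.
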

\begin{proof}
(a) We first note that $(\aA,G,\alpha)$ and $(\aC,H,\gamma)$ both satisfy the Ellwood condition from which we conclude that the maps
\begin{align*}
\Phi_1:\aA\otimes_{\text{alg}}\aA\otimes_{\text{alg}}\aC\rightarrow C(G,\aA\otimes\aC), \quad \Phi_1(x_1\otimes x_2\otimes y)(g):=x_1\alpha_g(x_2)\otimes y
\end{align*}
and $\Phi_2:(\aA\otimes_{\text{alg}}\aC)\otimes_{\text{alg}}(\aA\otimes_{\text{alg}}\aC)\rightarrow C(H,\aA\otimes\aA\otimes\aC)$ given by
\begin{align*}
\Phi_2\bigr((x_1\otimes y_1)\otimes(x_2\otimes y_2)\bigl)(h):=x_1\otimes\beta_h(x_2)\otimes y_1\gamma_h(y_2)
\end{align*}
have dense range. It follows, identifying $C(H,C(G,\aA\otimes\aC))$ with $C(G\times H,\aA\otimes\aC)$, that also their amplified composition
\begin{gather*}
\Phi:=(\id_{C(H)}\otimes\Phi_1)\circ\Phi_2:(\aA\otimes_{\text{alg}}\aC)\otimes_{\text{alg}}(\aA\otimes_{\text{alg}}\aC)\rightarrow C(G\times H,\aA\otimes\aC)
\shortintertext{given by}
\Phi\bigr((x_1\otimes y_1)\otimes(x_2\otimes y_2)\bigl)(g,h)=x_1(\alpha_g\beta_h)(x_2)\otimes y_1\gamma_h(y_2)
\end{gather*}
has dense range. That is, $\bigr((\aA\otimes\aC),G\times H,(\alpha\circ\beta)\otimes\gamma\bigl)$ satisfies the Ellwood condition. 

(b) To verify the second assertion we simply apply Proposition \ref{normal subgroup} to the C\Star dynamical system in part (a) and the normal subgroup $\{\one_G\}\times H$ of $G\times H$.
\end{proof}

\begin{expl}
	\sloppy
	The Connes-Landi spheres $\mathbb{S}^n_{\theta}$ are extensions of the noncommutative tori $\mathbb{T}^n_{\theta}$ (\cf \cite{LaSu05}).
	We are in particularly interested in the case $n=7$. In this case there is a continuous action of the 2-torus $\mathbb{T}^2$ on the 7-sphere $\mathbb{S}^7\subseteq\mathbb{C}^4$ given by
	\begin{align*}
	\sigma:\mathbb{S}^7\times\mathbb{T}^2\rightarrow\mathbb{S}^7, \quad \bigr((z_1,z_2,z_3,z_4),(t_1,t_2)\bigl)\mapsto(t_1z_1,t_1z_2,t_2z_3,t_2z_4).
	\end{align*}
	Let $(C(\mathbb{S}^7),\mathbb{T}^2,\alpha_{\sigma})$ be the corresponding C\Star dynamical system. Furthermore, let $(\mathbb{T}^2_{\theta},\mathbb{T}^2,\alpha^2_{\theta})$ be the free C\Star dynamical system associated to the gauge action on the noncommutative 2-torus $\mathbb{T}^2_{\theta}$ (see Example \ref{expl sat act a} below). The Connes--Landi sphere $\mathbb{S}^7_{\theta}$ is defined as the fixed point algebra of the tensor product action $\alpha_{\sigma}\otimes\alpha^2_{\theta}$ of $\mathbb{T}^2$ on $C(\mathbb{S}^7,\mathbb{T}^2_{\theta})=C(\mathbb{S}^7)\otimes\mathbb{T}^2_{\theta}$, \ie,
	\begin{align*}
	\mathbb{S}^7_{\theta}:=C(\mathbb{S}^7,\mathbb{T}^2_{\theta})^{\mathbb{T}^2}.
	\end{align*}
	Our intention is to use Theorem \ref{newsatact} (b) to endow $\mathbb{S}^7_{\theta}$ with a free $\SU(2)$-action. For this purpose, we consider the free and continuous $\SU(2)$-action on the 7-sphere $\mathbb{S}^7$ given by
	\begin{align*}
	\mu:\mathbb{S}^7\times \SU(2) \rightarrow\mathbb{S}^7, \quad \bigr((z_1,z_2,z_3,z_4),M\bigl)\mapsto(z_1,z_2,z_3,z_4)\begin{pmatrix}
	 M &   0\\
	 0 & M\\
	\end{pmatrix}.
	\end{align*}
	It follows from Theorem \ref{sat=free/comm} that the induced C\Star dynamical system $(C(\mathbb{S}^7),\SU(2),\alpha_{\mu})$ is free. Moreover, it is easily verified that the actions $\alpha_{\mu}$ and $\alpha_{\sigma}$ commute. Therefore, Theorem \ref{newsatact} (b) implies that the C\Star dynamical system $\bigr(\mathbb{S}^7_{\theta},\SU(2),\alpha_{\mu}\otimes\id_{\mathbb{T}^2_{\theta}}\bigl)$ is free.
\end{expl}

\section{Construction of Free Actions of Compact Abelian Groups}
\label{prelud}

In this section we will specify the data on which our classification of free actions of compact Abelian groups is based (see Definition~\ref{def:factor_sys}) and, moreover, we will show that it is complete in the sense that every classifying data indeed can be obtained from a free C\Star dynamical system.

\subsection{The Picard Group and Factor Systems}

Corollary~\ref{equcondsatactgrpcomm} suggests the relevance of Morita equivalence $\aB-\aB$ bimodules for the classification of free actions with a given fixed point algebra $\aB$ and a given compact Abelian group~$G$. These objects have a natural interpretation as noncommutative line bundles ``over'' $\aB$. In particular, just like in the classical theory of line bundles, the set of their equivalence classes carry a natural group structure.

\begin{defn}	\label{picgrpalg}
Let $\aB$ be a C\Star algebra. Then the set of equivalence classes of Morita equivalence $\aB-\aB$-bimodules forms an Abelian group with respect to the internal tensor product of Hilbert $\aB-\aB$-bimodules. This group is called the \emph{Picard group} of $\aB$ and it is denoted by $\Pic(\aB)$.
\end{defn}

\begin{rmk}	\label{out_in_pic}
	For a unital C\Star algebra $\aB$, the group of outer automorphisms $\Out(\aB) := \Aut(\aB) / \Inn(\aB)$ is always a subgroup of $\Pic(\aB)$. More precisely, for any \Star automorphism~$\alpha$ of $\aB$ we may define an element of the Picard group $\Pic(\aB)$ by the following Morita equivalence $\aB-\aB$-bimodule. Let $M_\alpha$ be the vector space $\aB$ endowed with the canonical left Hilbert $\aB$-module structure. Moreover, let the right action be given by $m \cdot b := m \alpha(b)$ for $m \in M_\alpha$ and $b \in \aB$, and let the right $\aB$-valued inner product be given by $\langle m_1, m_2 \rangle_\aB := \alpha^{-1}(m_1^* m_2)$ for $m_1, m_2 \in M_\alpha$. It is straightforwardly checked that $M_\alpha$ is a Morita equivalence $\aB-\aB$-bimodule and that, for $\alpha, \beta \in \Aut(\aB)$, we have $M_\alpha \widehat{\otimes}_{\aB} M_\beta \simeq M_{\alpha \circ \beta}$. A~few moments thought also shows that $M_\alpha \simeq \aB$ iff $\alpha$ is inner. Summarizing we have the exact sequence of groups
	\begin{align*}
		1\longrightarrow\Inn(\aB){\longrightarrow}\Aut(\aB){\longrightarrow}\Pic(\aB).
	\end{align*}
\end{rmk}

\begin{expl}~\label{expl:pic_grp}
	\begin{enumerate}
	\item	
		For a finite-dimensional C\Star algebra $\aB$, the Picard group $\Pic(\aB)$ is isomorphic to the group of permutations of the spectrum of $\aB$ (see \cite[Section 3]{BrGrRi77}).
	\item	\label{en:pic_grp:C(X)}
		For some compact space $X$, the Picard group $\Pic \bigl( C(X) \bigr)$ is isomorphic to the semi-direct product $\Pic(X)\rtimes\Homeo(X)$, where $\Pic(X)$ denotes the set of equivalence classes of complex line bundles over $X$ and $\Homeo(X)$ the group of homeomorphisms of $X$ (see \cite{BrGrRi77,BuWe04}).
	\item
		Let $0 < \theta < 1$ be irrational and $\mathbb{T}^2_{\theta}$ the corresponding quantum 2-torus. Then $\Pic(\mathbb{T}^2_{\theta})$ is isomorphic to $\Out(\mathbb{T}^2_{\theta})$ in case $\theta$ is quadratic and isomorphic to $\Out(\mathbb{T}^2_{\theta})\rtimes\mathbb{Z}$ otherwise (see~\cite{Ko97}).
	\end{enumerate}
\end{expl}

The next statement is a first step towards finding invariants, \ie, classification data, for free C\Star dynamical systems with a prescribed fixed point algebra.

\begin{prop}\label{prop:sat_vs_pic}
	Each free C\Star dynamical system $(\aA,G,\alpha)$ with unital C\Star algebra $\aA$, compact Abelian group $G$ and fixed point algebra $\aB:=\aA^G$ gives rise to a group homomorphism $\varphi_\aA:\widehat{G}\rightarrow\Pic(\aB)$ given by $\varphi_\aA(\pi):=[A(\pi)]$.
\end{prop}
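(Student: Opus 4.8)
The plan is to use the Morita equivalence bimodule structure furnished by freeness and to identify the internal tensor product $A(\pi)\widehat{\otimes}_{\aB}A(\sigma)$ with $A(\pi+\sigma)$ via the multiplication of $\aA$. First I would check that $\varphi_\aA$ is well defined. Since $(\aA,G,\alpha)$ is free, Corollary~\ref{equcondsatactgrpcomm}~(b) guarantees that for every $\pi\in\widehat{G}$ the isotypic component $A(\pi)$ is a Morita equivalence $\aB$--$\aB$-bimodule, with left and right inner products ${}_{\aB}\langle s,t\rangle=st^*$ and $\langle s,t\rangle_{\aB}=s^*t$; these indeed take values in $\aB=\aA^G$ because $\abs{\pi_g}=1$ forces $st^*$ and $s^*t$ to be $\alpha$-invariant. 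Hence $[A(\pi)]\in\Pic(\aB)$ is well defined, and so is the map $\varphi_\aA$. Moreover $\varphi_\aA(0)=[A(0)]=[\aB]=1$ is the neutral element.

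For the homomorphism property I would fix $\pi,\sigma\in\widehat{G}$ and consider the multiplication map $\mu(a\otimes b):=ab$. The computation $\alpha_g(ab)=\pi_g\sigma_g\,ab=(\pi+\sigma)_g\,ab$ shows $ab\in A(\pi+\sigma)$, and since $\mu(a\cdot b_0\otimes b)=\mu(a\otimes b_0\cdot b)$ for $b_0\in\aB$, the map descends to a $\aB$--$\aB$-bimodule map $\mu\colon A(\pi)\otimes_{\aB}A(\sigma)\to A(\pi+\sigma)$. The heart of the argument is that $\mu$ preserves both inner products. Using the tensor-product inner products recalled in the preliminaries one gets $\langle a_1\otimes b_1,a_2\otimes b_2\rangle_{\aB}=b_1^*a_1^*a_2b_2=\langle a_1b_1,a_2b_2\rangle_{\aB}$ and, on the left, ${}_{\aB}\langle a_1\otimes b_1,a_2\otimes b_2\rangle=a_1b_1b_2^*a_2^*={}_{\aB}\langle a_1b_1,a_2b_2\rangle$. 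Thus $\mu$ is isometric and extends to an isometric bimodule morphism $\overline{\mu}\colon A(\pi)\widehat{\otimes}_{\aB}A(\sigma)\to A(\pi+\sigma)$.

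It remains to prove that $\overline{\mu}$ is surjective, and this is the step I expect to be the main obstacle, since it is where fullness---hence freeness---is genuinely used. By the preliminaries on Hilbert module structures, $A(\pi)\widehat{\otimes}_{\aB}A(\sigma)$ is again a Morita equivalence $\aB$--$\aB$-bimodule, in particular full; as $\aB$ is unital, fullness yields finitely many $x_i,y_i$ with $\sum_i\langle x_i,y_i\rangle_{\aB}=\one$, and isometry of $\overline{\mu}$ gives $\sum_i\langle\overline{\mu}(x_i),\overline{\mu}(y_i)\rangle_{\aB}=\one$. For arbitrary $p\in A(\pi+\sigma)$ the Morita compatibility ${}_{\aB}\langle p,q\rangle\cdot r=p\cdot\langle q,r\rangle_{\aB}$ then yields $p=\sum_i p\cdot\langle\overline{\mu}(x_i),\overline{\mu}(y_i)\rangle_{\aB}=\sum_i\overline{\mu}\bigl({}_{\aB}\langle p,\overline{\mu}(x_i)\rangle\cdot y_i\bigr)\in\im\overline{\mu}$, so $\overline{\mu}$ is onto. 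An isometric surjective bimodule morphism of Morita equivalence bimodules is an isomorphism, whence $[A(\pi+\sigma)]=[A(\pi)]\cdot[A(\sigma)]$ in $\Pic(\aB)$, that is, $\varphi_\aA(\pi+\sigma)=\varphi_\aA(\pi)\varphi_\aA(\sigma)$; the inner-product identities above are routine calculations.
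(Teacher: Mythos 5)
Your proof is correct and follows essentially the same route as the paper, whose one-line argument computes $\varphi_\aA(\pi+\rho)=[A(\pi+\rho)]=[A(\pi)\widehat{\otimes}_{\aB}A(\rho)]=\varphi_\aA(\pi)\varphi_\aA(\rho)$ by appeal to Corollary~\ref{equcondsatactgrpcomm}. The only difference is that you verify the multiplication-induced isomorphism $A(\pi)\widehat{\otimes}_{\aB}A(\rho)\simeq A(\pi+\rho)$ in detail (inner-product preservation plus surjectivity via fullness of the tensor product), which is a worthwhile supplement since the corollary as stated only records the case $\rho=-\pi$.
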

\begin{proof}
	To verify the assertion we choose $\pi,\rho\in\widehat{G}$ and use Corollary \ref{equcondsatactgrpcomm} to compute
	\begin{align*}
	\varphi_\aA(\pi+\rho)=[A(\pi+\rho)]=[A(\pi)\widehat{\otimes}_{\aB}A(\rho)]=[A(\pi)][A(\rho)]=\varphi_\aA(\pi)\varphi_\aA(\rho).
	\end{align*}
	This shows that the map $\varphi_\aA$ is a group homomorphism.
\end{proof}

The group homomorphism $\varphi: \widehat{G} \to \Pic(\aB)$ in Proposition~\ref{prop:sat_vs_pic} is not enough to uniquely determine the C\Star dynamical system up to equivalence. Loosely speaking it determines the linear structure but not the multiplication of~$\aA$. In order to see what is missing, we choose for each $\pi \in \widehat{G}$ a Morita equivalence bimodule $M_\pi$ in the class $\varphi(\pi)$. The group homomorphism property of $\varphi$ guarantees that for each $\pi, \rho \in \widehat{G}$ there exist isomorphisms of Morita equivalence $\aB-\aB$-bimodules
\begin{equation*}
	\Psi_{\pi, \rho}: M_\pi \widehat{\otimes}_{\aB} M_\rho \to M_{\pi + \rho}.
\end{equation*}
In general $\varphi$ does not impose any relation among the maps $\Psi_{\pi, \rho}$.
But, comming from a free C\Star dynamical system $(\aA, G, \alpha)$ with unital C\Star algebra $\aA$, compact Abelian group $G$ and fixed point algebra $\aB:=\aA^G$, we may choose canonically 
\begin{align*}
M_\pi := A(\pi) \quad \text{and} \quad \Psi_{\pi, \rho}(x \tensor_\aB y) := xy
\end{align*}
for each $\pi, \rho \in \widehat{G}$. In this case, the associativity of the multiplication in $\aA$ implies 
\begin{equation}
	\label{eq:assoc}
	\Psi_{\pi + \rho, \sigma} \circ (\Psi_{\pi,\rho} \tensor_\aB \id_\sigma) 
	=
	\Psi_{\pi, \rho + \sigma} \circ (\id_\pi \tensor_\aB \Psi_{\rho, \sigma}).
\end{equation}
This suggests to take the following notion of factor system as a classifying object.

\begin{defn} \label{def:factor_sys}
	Let $\aB$ be a unital C\Star algebra, $G$ be a compact Abelian group, and let $\varphi:\widehat{G} \to \Pic(\aB)$ be a group homomorphism. Furthermore, let $(M_\pi, \Psi_{\pi, \rho})_{\pi, \rho \in \widehat{G}}$ be a family where
	\begin{enumerate}
	\item 
		for each $\pi \in \widehat{G}$ we have a Morita equivalence $\aB-\aB$-bimodule $M_\pi$  in the class~$\varphi(\pi)$.
	\item
		for each $\pi, \rho \in \widehat{G}$ we have an isomorphism $\Psi_{\pi,\rho}: M_\pi \widehat{\otimes}_{\aB} \M_\rho \to M_{\pi + \rho}$ of Morita equivalences.
	\end{enumerate}
	Then $(M_\pi, \Psi_{\pi, \rho})_{\pi, \rho \in \widehat{G}}$ is called a \emph{factor system} (for the map $\varphi$) if it satisfies equation~\eqref{eq:assoc} for all $\pi, \rho \in \widehat{G}$ and the normalization condition $(M_0, \Psi_{0,0}) = (\aB, \id_{\aB})$.
\end{defn}

\pagebreak[3]
\begin{rmk}~\label{rmk nice eq}
	\begin{enumerate}
	\item
		Up to the canonical isomorphism $M_\pi \widehat{\otimes}_{\aB} \aB \simeq M_\pi \simeq \aB \widehat{\otimes}_{\aB} M_\pi$, the normalization condition implies for all $\pi \in \widehat{G}$
		\begin{align*}
			\Psi_{\pi,0} &= \id_\pi = \Psi_{0,\pi} 
			&
			&\text{and}
			&
			\Psi_{\pi, -\pi} \tensor_{\aB} \id_\pi 
			&= \id_\pi \tensor_{\aB} \Psi_{-\pi, \pi}.  
		\end{align*}
	\item
		The group homomorphism $\varphi:\widehat{G} \to \Pic(\aB)$ can obviously be recovered from the factor system.
	\item
		Each free C\Star dynamical system $(\aA,G,\alpha)$ with unital C\Star algebra $\aA$ and compact Abelian group $G$ gives rise to a factor system $(A(\pi),m_{\pi, \rho})_{\pi, \rho\in\widehat{G}}$ for the group homomorphism $\varphi_\aA$ (\cf Proposition \ref{prop:sat_vs_pic}) with
	\begin{align*}
	m_{\pi, \rho}:A(\pi)\widehat{\otimes}_{\aB}A(\rho)\rightarrow A(\pi+\rho), \quad x\otimes_{\aB}y\mapsto xy.
	\end{align*}		
	\item
	     Given a group homomorphism $\varphi:\widehat{G} \to \Pic(\aB)$, the existence of a factor system imposes a non-trivial cohomological condition on $\varphi$. In Section 5 we will characterize this condition and provide cohomological ways to construct factor systems without a C\Star dynamical system at hand.
	\end{enumerate}
\end{rmk}

For the rest of this section we will show how to construct a corresponding C\Star dynamical system from a given factor system. This is done by reverse engineering an adaption of the GNS-representation for C\Star dynamical systems with arbitrary fixed point algebra. For this reason, we briefly review the construction. Readers familiar with the material may skip the next subsection.

\subsection{Adapting the GNS-Construction}

Let $(\aA, G, \alpha)$ be a C\Star dynamical system and $P_0:\aA \to \aA$ the conditional expectation onto the fixed point algebra $\aB: = \aA^G$. Then $\aA$ can be equipped with the structure of a right pre-Hilbert $\aB-\aB$-bimodule with respect to the usual multiplication and the inner product given by $\scal{x,y}_\aB := P_0(x^*y)$ for $x,y \in \aA$. Since $P_0$ is faithful, this inner product on $\aA$ is definite and we may take the completion of $\aA$ with respect to the norm $\norm{x}_2 := \norm{P_0(x^* x)}^{1/2}$. This provides a right Hilbert $\aB-\aB$-bimodule $L^2(\aA)$ with $\aA$ as a dense subset (\cf Proposition~\ref{lemsatact}). For each $\pi \in \widehat{G}$ the projection $P_\pi$ onto the isotypic component $A(\pi)$ can be continuously extended to a self-adjoint projection on $L^2(\aA)$. In~particular, the sets $A(\pi)$ are closed, pairwise orthogonal right Hilbert $\aB-\aB$-subbimodules of $L^2(\aA)$ and $L^2(\aA)$ can be decomposed into
\begin{equation*}
	L^2(\aA) = \overline{\bigoplus_{\pi \in \widehat{G}} A(\pi)}^{\norm{\cdot}_2}	
\end{equation*}
as right Hilbert $\aB-\aB$-bimodules. For each element $a \in \aA$ the left multiplication operator $\lambda_a:\aA\rightarrow \aA$, $x\mapsto ax$, then extends to an adjointable operator on $L^2(\aA)$. The arising representation
\begin{equation*}
	\lambda:\aA \to \End \bigl( L^2(\aA) \bigr), \quad a \mapsto \lambda_a
\end{equation*}
is called the \emph{left regular representation} of $\aA$. For each $g \in G$ the automorphism $\alpha_g$ extends from $\aA$ to an automorphism $U_g:L^2(\aA) \to L^2(\aA)$ of right Hilbert $\aB-\aB$-bimodules and the strongly continuous group $(U_g)_{g\in G}$ implements $\alpha_g$ in the sense that 
\begin{equation*}
	\alpha_g \bigl( \lambda_a \bigr) = U_g \, \lambda_a \, U_g^+	
\end{equation*}
for all $a \in \aA$. The vector $\one_\aB = \one_\aA \in L^2(\aA)$ is obviously cyclic and separating for this representation. In particular, the left regular representation is faithful and we may identify $\aA$ with the subalgebra $\lambda(\aA)$. 
Since the sum of the isotypic components is dense in $\aA$, the C\Star algebra $\lambda(\aA)$ is in fact generated by the operators $\lambda_a$ with $a \in A(\pi)$, $\pi \in \widehat{G}$. For such elements $a$ in some fixed isotypic component $A(\pi)$, $\pi \in \widehat{G}$, the operator $\lambda_a$ maps each subset $A(\rho) \subseteq L^2(\aA)$, $\rho \in \widehat{G}$, into $A(\pi+\rho)$ and therefore it is determined by the multiplication map
\begin{equation*}
	m_{\pi, \rho}: A(\pi) \tensor_{\text{alg}} A(\rho) \to A(\pi + \rho),
	\quad
	m_{\pi, \rho}(x,y) := xy = \lambda_x(y).
\end{equation*}
It is easily verified that $m_{\pi, \rho}$ factors to an isometry of the right Hilbert $\aB-\aB$-bimodules $A(\pi) \widehat{\tensor}_\aB A(\rho)$ and $A(\pi + \rho)$.

\subsection{Associativity and Factor Systems}
\label{sec:associativity cmpt ab}

In what follows we consider a fixed unital C\Star algebra $\aB$, a compact Abelian group $G$, and a group homomorphism $\varphi:\widehat{G}\rightarrow\Pic(\aB)$. We choose for each $\pi\in\widehat{G}$ a Morita equivalence $\aB-\aB$-bimodule $M_{\pi}$ in the isomorphism class $\varphi(\pi)\in\Pic(\aB)$, where for $\pi=0$ we choose $M_0 := \aB$. Since $\varphi$ is a group homomorphism, the Morita equivalence $\aB-\aB$-bimodules $M_{\pi}\widehat{\otimes}_{\aB}M_{{\rho}}$ and $M_{\pi+\rho}$ must be isomorphic for all $\pi,\rho\in\widehat{G}$ with respect to some isomorphism 
\begin{align*}
\Psi_{\pi,\rho}:M_{\pi}\widehat{\otimes}_{\aB}M_{\rho}\rightarrow M_{\pi+\rho}
\end{align*}
of Morita equivalence $\aB-\aB$-bimodules. We fix a choice of $\Psi_{\pi,\rho}$ for each $\pi, \rho \in \widehat{G}$ where $\Psi_{0,0} := \id_\aB$. This provides a bilinearmap 
\begin{align*}
m_{\pi,\rho}:M_{\pi}\times M_{\rho}\rightarrow M_{\pi+\rho}, \quad m_{\pi,\rho}(x,y):=\Psi_{\pi,\rho}(x\otimes_{\aB}y).
\end{align*}
The family of all such maps $(m_{\pi,\rho})_{\pi,\rho\in\widehat{G}}$ now gives rise to a multiplication map $m$ on the algebraic vector space 
\begin{align*}
A=\bigoplus_{\pi\in\widehat{G}}M_{\pi}.
\end{align*}

\begin{prop}
	\label{thm:factor_sys cptab}
	The following statements are equivalent:
	\begin{equivalence}
	\item[\emph{(a)}]
		$m$ is associative, \ie, $A$ is an algebra.
	\item[\emph{(b)}]
		$(M_\pi, \Psi_{\pi, \rho})_{\pi, \rho \in \widehat{G}}$ is a factor system.
	\end{equivalence}
\end{prop}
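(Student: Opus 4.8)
The plan is to observe first that almost all of the data required by Definition~\ref{def:factor_sys} is already present in the standing setup of this subsection. Indeed, each $M_\pi$ was chosen as a Morita equivalence $\aB-\aB$-bimodule in the class $\varphi(\pi)$ with $M_0 = \aB$, and each $\Psi_{\pi,\rho}$ is by construction an isomorphism of Morita equivalences with $\Psi_{0,0} = \id_\aB$. Thus conditions (1) and (2) of Definition~\ref{def:factor_sys} together with the normalization $(M_0,\Psi_{0,0}) = (\aB,\id_\aB)$ hold automatically, and the family $(M_\pi,\Psi_{\pi,\rho})_{\pi,\rho \in \widehat{G}}$ is a factor system \emph{precisely} when it satisfies the associativity constraint \eqref{eq:assoc}. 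Hence the proposition reduces to showing that $m$ is associative if and only if \eqref{eq:assoc} holds.

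For the core of the argument I would fix $\pi,\rho,\sigma \in \widehat{G}$ and homogeneous elements $x \in M_\pi$, $y \in M_\rho$, $z \in M_\sigma$, and unwind the two triple products. Using $m_{\pi,\rho}(x,y) = \Psi_{\pi,\rho}(x \otimes_\aB y)$ and the functoriality of the internal tensor product, one first rewrites $\Psi_{\pi,\rho}(x \otimes_\aB y) \otimes_\aB z = (\Psi_{\pi,\rho} \otimes_\aB \id_\sigma)(x \otimes_\aB y \otimes_\aB z)$, where the triple tensor product is formed via the canonical associativity isomorphism of $\widehat{\otimes}_\aB$, and thereby obtains
\[
m\bigl(m(x,y),z\bigr) = \bigl( \Psi_{\pi+\rho,\sigma} \circ (\Psi_{\pi,\rho} \otimes_\aB \id_\sigma) \bigr)(x \otimes_\aB y \otimes_\aB z).
\]
Symmetrically, $m\bigl(x,m(y,z)\bigr) = \bigl( \Psi_{\pi,\rho+\sigma} \circ (\id_\pi \otimes_\aB \Psi_{\rho,\sigma}) \bigr)(x \otimes_\aB y \otimes_\aB z)$. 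Thus associativity of $m$ on homogeneous elements is exactly the statement that the two composite morphisms appearing in \eqref{eq:assoc} agree on all elementary tensors $x \otimes_\aB y \otimes_\aB z$.

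The implication (b)$\Rightarrow$(a) is then immediate: \eqref{eq:assoc} gives equality of the two composites, in particular on elementary tensors, whence $m(m(x,y),z) = m(x,m(y,z))$ for homogeneous $x,y,z$; since every element of $A = \bigoplus_{\pi \in \widehat{G}} M_\pi$ is a finite sum of homogeneous pieces and $m$ is bilinear, associativity propagates to all of $A$. For the converse (a)$\Rightarrow$(b) the only real point is a density argument: associativity yields the coincidence of the two composites merely on elementary tensors, and I would upgrade this to an equality of maps on the whole completed tensor product $M_\pi \widehat{\otimes}_\aB M_\rho \widehat{\otimes}_\aB M_\sigma$ by noting that the $\aB$-balanced elementary tensors span a dense subspace and that both composites are bounded, indeed adjointable, bimodule morphisms, being built from isomorphisms of Morita equivalences and internal tensor products thereof. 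Continuity then forces agreement everywhere. This passage from elementwise identities to an identity of maps on the completion is the one genuine subtlety; the remainder is routine bookkeeping with the definition of $m$ and with the associativity of $\widehat{\otimes}_\aB$ implicit in writing the triple tensor product.
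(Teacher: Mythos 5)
Your proposal is correct and takes essentially the same route as the paper: the paper's proof consists precisely of the computation unwinding $m\bigl(m(x,y),z\bigr)$ and $m\bigl(x,m(y,z)\bigr)$ on homogeneous elements into the two composites of equation~\eqref{eq:assoc} evaluated on elementary tensors, and then concluding the equivalence. The only difference is that you spell out the density/continuity argument needed to upgrade agreement on elementary tensors to equality of the maps on the completed tensor product $M_\pi \widehat{\otimes}_{\aB} M_\rho \widehat{\otimes}_{\aB} M_\sigma$ --- a step the paper leaves implicit --- which is a legitimate and welcome addition rather than a deviation.
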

\begin{proof}
	For given $\pi, \rho, \sigma \in \widehat{G}$ we explicitly compute
	for all $x \in \M_\pi$, $y \in M_\rho$, $z \in M_\sigma$:
	\begin{align*}
		m \bigl( x, \, m(y,z) \bigr)
		&= m \bigl( x, \Psi_{\rho, \sigma}(y\otimes_{\aB}z) \bigr)
		= \Psi_{\pi, \rho +\sigma} \bigl( x, \Psi_{\rho, \sigma}(y\otimes_{\aB}z)\bigr) 
		\\
		m \bigl( m(x,y), \, z \bigr)
		&= m \bigl( \Psi_{\pi,\rho}(x\otimes_{\aB}y) , \, z \bigr)
		= \Psi_{\pi +\rho, \sigma}\bigl(\Psi_{\pi,\rho}(x\otimes_{\aB}y) , \, z \bigr).
	\end{align*}
	Therefore, $m$ is associative if and only if equation \eqref{eq:assoc} holds for all $\pi, \rho, \sigma \in \widehat{G}$.
\end{proof}

\subsection{Construction of an Involution}

We continue with a fixed factor system $(M_\pi, \Psi_{\pi, \rho})_{\pi, \rho\in\widehat{G}}$ for the map $\varphi$ and we write $A$ for the associated algebra. Our goal is to turn $A$ into a \Star algebra and right pre-Hilbert $\aB-\aB$-bimodule. For this purpose we involve the right Hilbert $\aB-\aB$-bimodule structure of each direct summands $M_\pi$ of $A$, \ie, the right $\aB$-valued inner products~$\langle\cdot,\cdot\rangle_\pi$ on~$M_\pi$.

\begin{lemma}\label{inv I}
The map $\langle\cdot,\cdot\rangle:A\times A\rightarrow\aB$ defined for $x = \bigoplus_\pi x_\pi, y = \bigoplus_\pi y_\pi \in A$ by
\begin{align*}
\langle x,y\rangle_{\aB}:=\sum_{\pi\in\widehat{G}}\langle x_\pi,y_\pi\rangle_\pi
\end{align*}
turns $A$ into a right pre-Hilbert $\aB-\aB$-bimodule and satisfies
\begin{align}
\langle m(b,x),m(b,x)\rangle_{\aB}\leq\norm{b}^2\langle x,x\rangle_{\aB}\label{cool inequality A}
\end{align}
for all $x\in A$ and $b\in\aB$.
\end{lemma}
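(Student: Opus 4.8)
The plan is to reduce every assertion to the corresponding property of the individual Morita equivalence bimodules $M_\pi$, so that the whole proof becomes a componentwise verification with no genuine analysis. The first step is to pin down the $\aB$-bimodule structure of $A$. Since $M_0 = \aB$ by the normalization condition, the identities $\Psi_{\pi,0} = \id_\pi = \Psi_{0,\pi}$ recorded in Remark~\ref{rmk nice eq} show that left and right multiplication in $A$ by an element $b \in \aB = M_0$ act componentwise and coincide with the intrinsic left and right $\aB$-actions on each summand; that is, $m(b,x) = \bigoplus_\pi b \cdot x_\pi$ and $m(x,b) = \bigoplus_\pi x_\pi \cdot b$ for $x = \bigoplus_\pi x_\pi$. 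Hence the bimodule structure underlying the definition of $\langle\cdot,\cdot\rangle_\aB$ is simply the direct sum of the bimodule structures of the $M_\pi$.

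Next I would verify the right pre-Hilbert $\aB-\aB$-bimodule axioms term by term. Sesquilinearity, the relation $\langle x, y \cdot b\rangle_\aB = \langle x,y\rangle_\aB\, b$, the symmetry $\langle x,y\rangle_\aB^* = \langle y,x\rangle_\aB$, and the compatibility $\langle a \cdot x, y\rangle_\aB = \langle x, a^* \cdot y\rangle_\aB$ for $a \in \aB$ all follow at once from the fact that each $M_\pi$ is a right Hilbert $\aB-\aB$-bimodule, together with the componentwise description above; because $x,y$ lie in the \emph{algebraic} direct sum, each sum involved is finite and there is nothing to converge. For definiteness I would observe that $\langle x,x\rangle_\aB = \sum_\pi \langle x_\pi, x_\pi\rangle_\pi$ is a finite sum of positive elements of $\aB$, hence positive, and that it vanishes precisely when every $\langle x_\pi, x_\pi\rangle_\pi$ does, i.e.\ when every $x_\pi = 0$. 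This establishes that $A$ is a right pre-Hilbert $\aB-\aB$-bimodule.

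Finally, for the inequality, I would use $m(b,x) = \bigoplus_\pi b \cdot x_\pi$ to write $\langle m(b,x), m(b,x)\rangle_\aB = \sum_\pi \langle b\cdot x_\pi, b\cdot x_\pi\rangle_\pi$, apply to each summand the inequality $\langle b\cdot x_\pi, b\cdot x_\pi\rangle_\pi \le \norm{b}^2 \langle x_\pi, x_\pi\rangle_\pi$ recalled for right Hilbert $\aB-\aB$-bimodules in the Preliminaries, and sum the finitely many resulting inequalities, using that a termwise-dominated finite sum of positive elements remains dominated. This yields $\langle m(b,x), m(b,x)\rangle_\aB \le \norm{b}^2 \langle x,x\rangle_\aB$. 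The argument is essentially routine bookkeeping; the only point requiring care is the first step, where the identification of algebra multiplication by $M_0 = \aB$ with the intrinsic module actions rests squarely on the normalization conditions of the factor system, and I anticipate no real obstacle beyond this.
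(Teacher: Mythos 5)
Your proposal is correct and follows exactly the route the paper intends: the paper's proof is a one-line sketch declaring the computations straightforward and noting that inequality~(\ref{cool inequality A}) follows from the corresponding inequalities for the inner products $\langle\cdot,\cdot\rangle_\pi$ on the summands $M_\pi$, which is precisely the componentwise argument you carry out. Your additional care in using the normalization conditions $\Psi_{\pi,0}=\id_\pi=\Psi_{0,\pi}$ to identify $m(b,x)$ with the intrinsic direct-sum $\aB$-actions is a legitimate filling-in of the details the paper leaves to the reader, not a deviation from its approach.
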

\begin{proof}
The necessary computations are straightforward and thus left to the reader. We only point out that the inequality (\ref{cool inequality A}) is a consequence of the corresponding inequalities satisfied by the right $\aB$-valued inner products $\langle\cdot,\cdot\rangle_\pi$.
\end{proof}

\begin{lemma}\label{inv II}
For each $y\in M_{\pi}$ and every $\rho \in \widehat{G}$ the left multiplication operator 
\begin{align*}
\ell_y:M_{\rho}\rightarrow M_{\pi}\widehat{\otimes}_{\aB} M_\rho, \quad x\mapsto y\otimes_{\aB} x
\end{align*}
is adjointable and hence bounded with adjoint given by
\begin{align*}
	\ell_y^+:M_{\pi}\widehat{\otimes}_{\aB} M_\rho\rightarrow M_{\rho}, \quad z\otimes_{\aB} x\mapsto \langle y,z\rangle_{\aB}. x.
\end{align*}
\end{lemma}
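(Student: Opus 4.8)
The plan is to write down the candidate adjoint explicitly, check that the defining formula is compatible with the balancing over $\aB$, verify the adjoint identity on elementary tensors, and finally extend from the algebraic tensor product to its completion by a norm estimate. Boundedness of both maps then follows, consistent with the general principle (invoked in the statement) that adjointable operators between Hilbert modules are automatically bounded.

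First I would check that the prescription $z \otimes_\aB x \mapsto \langle y, z\rangle_\aB \cdot x$ is well defined on the algebraic balanced tensor product $M_\pi \otimes_\aB M_\rho$. This only requires compatibility with the relation $(z \cdot b) \otimes_\aB x = z \otimes_\aB (b \cdot x)$ for $b \in \aB$, which holds because
\begin{align*}
\langle y, z \cdot b\rangle_\aB \cdot x = (\langle y, z\rangle_\aB\, b) \cdot x = \langle y, z\rangle_\aB \cdot (b \cdot x),
\end{align*}
using the defining property $\langle y, z\cdot b\rangle_\aB = \langle y, z\rangle_\aB\, b$ of a right $\aB$-valued inner product. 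Hence $\ell_y^+$ is a well-defined linear map on $M_\pi \otimes_\aB M_\rho$.

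Next I would verify the adjoint identity on elementary tensors. Recalling the right $\aB$-valued inner product on the internal tensor product from the preliminaries, for $x \in M_\rho$ and $z \otimes_\aB w \in M_\pi \otimes_\aB M_\rho$ one has
\begin{align*}
\langle \ell_y(x), z \otimes_\aB w\rangle_\aB = \langle y \otimes_\aB x, z \otimes_\aB w\rangle_\aB = \langle x, \langle y, z\rangle_\aB \cdot w\rangle_\aB = \langle x, \ell_y^+(z \otimes_\aB w)\rangle_\aB,
\end{align*}
so the adjoint relation $\langle \ell_y(x), \xi\rangle_\aB = \langle x, \ell_y^+(\xi)\rangle_\aB$ holds for all $x \in M_\rho$ and all $\xi$ in the dense subspace $M_\pi \otimes_\aB M_\rho$. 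This is the heart of the argument and is immediate from the definitions.

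It then remains to pass to the completion, which I expect to be the only mildly delicate point, since the candidate adjoint is so far defined only on the algebraic part. I would first bound $\ell_y$: writing $b := \langle y, y\rangle_\aB^{1/2} \in \aB$, the bimodule identity $\langle a \cdot x, z\rangle_\aB = \langle x, a^* \cdot z\rangle_\aB$ together with the inner product inequality recalled in the preliminaries gives
\begin{align*}
\langle \ell_y(x), \ell_y(x)\rangle_\aB = \langle x, \langle y,y\rangle_\aB \cdot x\rangle_\aB = \langle b \cdot x, b \cdot x\rangle_\aB \leq \norm{b}^2 \langle x, x\rangle_\aB = \norm{y}^2 \langle x, x\rangle_\aB,
\end{align*}
whence $\norm{\ell_y} \leq \norm{y}$. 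Substituting $x = \ell_y^+(\xi)$ into the adjoint identity and applying the Cauchy--Schwarz inequality for Hilbert modules yields $\norm{\ell_y^+(\xi)}^2 \leq \norm{\ell_y}\, \norm{\ell_y^+(\xi)}\, \norm{\xi}$, so that $\norm{\ell_y^+(\xi)} \leq \norm{y}\, \norm{\xi}$ on the dense subspace. Consequently $\ell_y^+$ extends continuously to all of $M_\pi \widehat{\otimes}_\aB M_\rho$, the adjoint identity persists by continuity, and $\ell_y$ is adjointable with adjoint $\ell_y^+$; boundedness of both maps is then automatic.
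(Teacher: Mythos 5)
Your proof is correct and takes essentially the same route as the paper's: in both, the heart of the argument is the verification of the adjoint identity $\scal{\ell_y(x), z\otimes_{\aB} z'}_{\aB} = \scal{x, \langle y,z\rangle_{\aB}\cdot z'}_{\aB}$ on elementary tensors, followed by a density argument. The paper leaves implicit the well-definedness of $\ell_y^+$ on the balanced tensor product and its bounded extension to the completion $M_\pi\widehat{\otimes}_{\aB} M_\rho$, so your explicit norm estimates (via $\norm{\ell_y}\leq\norm{y}$ and Cauchy--Schwarz) are careful fill-ins of routine steps rather than a different approach.
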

\begin{proof}
To verify the assertion we first note that the linear span of elements $z\otimes_{\aB}z'$ with $z\in M_\pi$ and $z' \in M_\rho$ is dense in $M_{\pi+\rho}$. For such an element and $x\in M_\rho$ we obtain
	\begin{align*}
		\scal{\ell_y(x), \; z\otimes_{\aB}z'}_{\aB}
		= \scal{y\otimes_{\aB}x, z\otimes_{\aB}z'}_{\aB}
		= \scal[\big]{x,\langle y,z\rangle\cdot z'}_{\aB}=\scal[\big]{x,\ell_y^+(z\otimes_{\aB}z')}_{\aB}
	\end{align*}
which implies that $\ell_y$ is adjointable with adjoint given by the map $\ell_y^+$.
\end{proof}

\begin{prop}\label{inv XI}
	For each $y\in M_{\pi}$ and every $\rho \in \widehat{G}$ the left multiplication operator 
	\begin{align*}
	\lambda_y:M_\rho\rightarrow M_{\pi+\rho}\,\,\,\lambda_y(x) := m(y,x)
	\end{align*}
	is adjointable and hence bounded and satisfies
	\begin{align}
		\scal{\lambda_y(x), \; \lambda_y(x)}_{\aB}
		\leq \norm{\langle y,y \rangle_{\aB}} \cdot \langle x,x\rangle_{\aB}
		\label{cool inequality C}
	\end{align}
	for all $x\in M_\rho$.
\end{prop}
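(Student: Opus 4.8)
The plan is to factor the left multiplication operator $\lambda_y$ through the internal tensor product and thereby reduce both claims to results already established. By the very definition of the multiplication $m$ we have, for every $x \in M_\rho$,
\[
\lambda_y(x) = m(y,x) = \Psi_{\pi,\rho}(y \otimes_\aB x) = \Psi_{\pi,\rho}\bigl(\ell_y(x)\bigr),
\]
where $\ell_y$ is the operator from Lemma~\ref{inv II}. Hence $\lambda_y = \Psi_{\pi,\rho}\circ\ell_y$, and this single identity drives the whole argument.

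For adjointability, recall that $\ell_y$ is adjointable by Lemma~\ref{inv II}. Since $\Psi_{\pi,\rho}$ is an isomorphism of Morita equivalence $\aB-\aB$-bimodules, it is bijective and preserves the right $\aB$-valued inner products; it is therefore a unitary of right Hilbert $\aB$-modules, in particular adjointable with $\Psi_{\pi,\rho}^+ = \Psi_{\pi,\rho}^{-1}$. As a composition of adjointable operators, $\lambda_y$ is then adjointable and hence bounded.

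For the inequality I would exploit the same factorization together with the isometry of $\Psi_{\pi,\rho}$. Because $\Psi_{\pi,\rho}$ preserves the right inner product,
\[
\langle \lambda_y(x), \lambda_y(x)\rangle_\aB = \langle y \otimes_\aB x,\, y \otimes_\aB x\rangle_\aB,
\]
and by the formula for the inner product on $M_\pi\widehat{\otimes}_\aB M_\rho$ recalled in Section~\ref{sec:prel} the right-hand side equals $\langle x,\, \langle y,y\rangle_\aB\cdot x\rangle_\aB$. Writing $b := \langle y,y\rangle_\aB \in \aB$, which is positive, and using $b = (b^{1/2})^*b^{1/2}$ together with the compatibility $\langle a\cdot x, z\rangle_\aB = \langle x, a^*\cdot z\rangle_\aB$ of the left action with the right inner product, one rewrites this as $\langle b^{1/2}\cdot x,\, b^{1/2}\cdot x\rangle_\aB$. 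Finally, the bimodule inequality~\eqref{cool inequality A} (equivalently, the left-module inequality recalled in Section~\ref{sec:prel}) bounds this by $\norm{b^{1/2}}^2\langle x,x\rangle_\aB = \norm{\langle y,y\rangle_\aB}\,\langle x,x\rangle_\aB$, which is exactly~\eqref{cool inequality C}.

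The two substantive points are recognizing that an isomorphism of Morita equivalence bimodules is isometric for the right inner product, so that the computation transfers cleanly through $\Psi_{\pi,\rho}$, and the positivity trick $b = (b^{1/2})^2$ that turns the sandwiched expression $\langle x, b\cdot x\rangle_\aB$ into a genuine norm-square to which the left-module inequality applies. Both are routine, so I do not anticipate a serious obstacle; the bulk of the work is the bookkeeping already carried out in Lemmas~\ref{inv I} and~\ref{inv II}.
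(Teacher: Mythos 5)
Your argument coincides with the paper's: both establish adjointability via the factorization $\lambda_y=\Psi_{\pi,\rho}\circ\ell_y$, using Lemma~\ref{inv II} and the unitarity of $\Psi_{\pi,\rho}$, and both derive inequality~\eqref{cool inequality C} from the identity $\scal{\lambda_y(x),\lambda_y(x)}_{\aB}=\scal[\big]{x,\langle y,y\rangle_{\aB}\cdot x}_{\aB}$ together with inequality~\eqref{cool inequality A}. Your explicit justification of the final estimate via $b=(b^{1/2})^*b^{1/2}$ merely spells out a step the paper leaves implicit, so the two proofs are essentially identical.
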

\begin{proof}
That the left multiplication operator $\lambda_y:M_\rho\to M_{\pi+\rho}$ is adjointable for each $y\in M_{\pi}$ and every $\rho \in \widehat{G}$ is an immediate consequence of Lemma \ref{inv II} and the unitarity of the map $\Psi_{\pi,\rho}$ because $\lambda_y=\Psi_{\pi,\rho}\circ\ell_y$. The asserted inequality (\ref{cool inequality C}) then easily follows from a short computation involving inequality (\ref{cool inequality A}). Indeed, we obtain
\begin{align*}
\scal{\lambda_y(x), \; \lambda_y(x)}_{\aB}&=\scal{\Psi_{\pi,\rho}(y\otimes_{\aB}x), \Psi_{\pi,\rho}(y\otimes_{\aB}x)}_{\aB}=\scal{y\otimes_{\aB}x, y\otimes_{\aB}x}_{\aB}\\
&=\scal[\big]{x,\langle y,y\rangle_{\aB}\cdot x}_{\aB}
\leq\norm{\langle y,y \rangle_{\aB}}\langle x,x\rangle_{\aB}
\end{align*}
for all $x\in M_\rho$.
\end{proof}

\begin{cor}\label{inv III}
For each $a\in A$ the left multiplication operator
	\begin{equation*}
		\lambda_a: A\rightarrow A, \quad \lambda_a(x) := m(a,x)
	\end{equation*}
is adjointable and bounded.
\end{cor}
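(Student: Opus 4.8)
The plan is to reduce the claim to the homogeneous case already handled in Proposition~\ref{inv XI} and then reassemble the general operator from its homogeneous pieces, using that the decomposition $A = \bigoplus_{\pi \in \widehat{G}} M_\pi$ is \emph{orthogonal} with respect to the inner product of Lemma~\ref{inv I}. Since every $a \in A$ is a finite sum $a = \sum_\pi a_\pi$ with $a_\pi \in M_\pi$, and since $\lambda_a = \sum_\pi \lambda_{a_\pi}$ is then a finite sum of operators, and finite sums of bounded adjointable operators are again bounded and adjointable, it suffices to treat a single homogeneous element $y \in M_\pi$.

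First I would fix $y \in M_\pi$ and define a candidate operator on all of $A$ by letting $\lambda_y$ act componentwise: on each summand $M_\rho$ it is the map $M_\rho \to M_{\pi+\rho}$, $x \mapsto m(y,x)$ from Proposition~\ref{inv XI}. For boundedness, given $x = \bigoplus_\rho x_\rho \in A$, the images $m(y,x_\rho) \in M_{\pi+\rho}$ lie in pairwise orthogonal summands, so
\[
	\scal{\lambda_y(x), \lambda_y(x)}_\aB = \sum_\rho \scal{m(y,x_\rho), m(y,x_\rho)}_{\pi+\rho}.
\]
Applying inequality~\eqref{cool inequality C} term by term and using orthogonality once more on the right-hand side yields $\scal{\lambda_y(x), \lambda_y(x)}_\aB \leq \norm{\scal{y,y}_\aB} \cdot \scal{x,x}_\aB$, crucially because the bounding constant $\norm{\scal{y,y}_\aB}$ is independent of $\rho$. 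Hence $\lambda_y$ is bounded on $A$.

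For adjointability I would note that on each component the adjoint of $\lambda_y \colon M_\rho \to M_{\pi+\rho}$ is $\ell_y^+ \circ \Psi_{\pi,\rho}^{-1}$, since $\lambda_y = \Psi_{\pi,\rho} \circ \ell_y$ with $\Psi_{\pi,\rho}$ unitary and $\ell_y$ adjointable by Lemma~\ref{inv II}. Define $\lambda_y^+ \colon A \to A$ by letting it act on $M_\sigma$ as the adjoint of $\lambda_y \colon M_{\sigma-\pi} \to M_\sigma$, so that $\lambda_y^+$ lowers the degree by $\pi$. Orthogonality then gives, for $x = \bigoplus_\rho x_\rho$ and $w = \bigoplus_\sigma w_\sigma$, the chain of equalities $\scal{\lambda_y(x), w}_\aB = \sum_\rho \scal{m(y,x_\rho), w_{\pi+\rho}}_\aB = \sum_\rho \scal{x_\rho, \lambda_y^+(w_{\pi+\rho})}_\aB = \scal{x, \lambda_y^+(w)}_\aB$, confirming that $\lambda_y^+$ is indeed the adjoint of $\lambda_y$ on $A$.

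Finally I would sum over the finitely many homogeneous components of $a$. The only genuinely delicate point is the interchange of the operator with the orthogonal decomposition: everything goes through because distinct $\rho$ feed into distinct, mutually orthogonal target summands $M_{\pi+\rho}$, so no cross terms survive either in the norm estimate or in the adjoint identity, and because Proposition~\ref{inv XI} supplies an estimate uniform in $\rho$. I therefore expect no real obstacle beyond carefully bookkeeping the grading shift by $\pi$.
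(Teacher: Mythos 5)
Your proof is correct and follows precisely the route the paper intends: Corollary~\ref{inv III} is stated there without proof as an immediate consequence of Proposition~\ref{inv XI}, obtained exactly as you do by splitting $a$ into its finitely many homogeneous components and assembling the componentwise adjointable operators via the orthogonality of the decomposition $A=\bigoplus_{\pi\in\widehat{G}}M_\pi$. Your explicit bookkeeping of the degree shift, the uniform bound from inequality~\eqref{cool inequality C}, and the absence of cross terms merely spells out what the authors leave implicit, so there is nothing to add.
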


We are now ready to introduce an involution on $A$ which turns $A$ into a \Star algebra. Here we use the fact that the involution is determined by the inner product if we impose that the inner product on $A$ takes it canonical form.

\begin{defn}\label{inv IV}
The adjoint map $i:A\rightarrow A$, $a \mapsto i(a)$ is given by
\begin{align*}
i(a):=\lambda^+_a(\one_{\aB}).
\end{align*}
It is clearly antilinear and maps the subspace $M_\pi$,  $\pi\in \widehat{G}$, into $M_{-\pi}$. Moreover, on the subspace $\aB \subseteq A$ the imap coincides with the usual adjoint, \ie, we have $i(b) = b^*$.
\end{defn}

The following lemma shows that the adjoints of left multiplication operators commute with right multiplication operators.

\begin{lemma}\label{inv VI}
For all $x\in M_\pi$, $y\in M_\rho$ and $z\in M_{\pi+\sigma}$ with $\pi,\rho,\sigma\in\widehat{G}$ we have
\begin{align*}
m(\lambda^+_x(z),y)=\lambda^+_x(m(z,y)).
\end{align*}
\end{lemma}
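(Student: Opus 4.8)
The identity asserts that right multiplication by $y$ commutes with the adjoint $\lambda_x^+$ of left multiplication by $x$. The first thing I would pin down is that both sides are well defined and live in the same module: for $x \in M_\pi$ and $z \in M_{\pi+\sigma}$ the element $\lambda_x^+(z)$ lies in $M_\sigma$, so $m(\lambda_x^+(z), y) \in M_{\sigma+\rho}$; on the other hand $m(z,y) \in M_{\pi+\sigma+\rho}$ and the relevant $\lambda_x$ is now the operator $M_{\sigma+\rho} \to M_{\pi+\sigma+\rho}$, so $\lambda_x^+(m(z,y)) \in M_{\sigma+\rho}$ as well. The key subtlety is that the symbol $\lambda_x^+$ denotes two \emph{different} operators on the two sides (their target degrees differ), and bridging them is precisely what the associativity of the factor system provides.

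My plan is to compute both sides explicitly through the tensor-product picture, using the factorization $\lambda_x = \Psi_{\pi,\tau} \circ \ell_x$ from the proof of Proposition~\ref{inv XI} and hence $\lambda_x^+ = \ell_x^+ \circ \Psi_{\pi,\tau}^{-1}$ (since $\Psi$ is unitary), together with the formula $\ell_x^+(v \otimes_{\aB} u) = \langle x, v\rangle_{\aB} \cdot u$ from Lemma~\ref{inv II}. Since right multiplication $w \mapsto m(w,y)$ and $\lambda_x^+$ are both bounded and $\Psi_{\sigma,\rho}$ is unitary, I may assume by linearity and continuity that $\Psi_{\pi,\sigma}^{-1}(z) = x' \otimes_{\aB} s$ is an elementary tensor with $x' \in M_\pi$ and $s \in M_\sigma$. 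For the left-hand side this yields $\lambda_x^+(z) = \langle x, x'\rangle_{\aB} \cdot s$, and then, pulling the scalar $\langle x, x'\rangle_{\aB} \in \aB$ through the left $\aB$-linear isomorphism $\Psi_{\sigma,\rho}$, one obtains $m(\lambda_x^+(z), y) = \langle x, x'\rangle_{\aB} \cdot m(s,y)$.

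For the right-hand side I would write $m(z,y) = \Psi_{\pi+\sigma,\rho}(z \otimes_{\aB} y)$ and rewrite $\Psi_{\pi,\sigma+\rho}^{-1} \circ \Psi_{\pi+\sigma,\rho}$ using the associativity relation~\eqref{eq:assoc} (with the indices in the order $(\pi,\sigma,\rho)$), which gives $\Psi_{\pi,\sigma+\rho}^{-1}\circ\Psi_{\pi+\sigma,\rho} = (\id_\pi \otimes_{\aB} \Psi_{\sigma,\rho}) \circ (\Psi_{\pi,\sigma}^{-1} \otimes_{\aB} \id_\rho)$ under the canonical identification $(M_\pi\widehat{\otimes}_{\aB} M_\sigma)\widehat{\otimes}_{\aB} M_\rho \cong M_\pi \widehat{\otimes}_{\aB} (M_\sigma \widehat{\otimes}_{\aB} M_\rho)$. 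Applied to $z \otimes_{\aB} y$ this transforms it into $x' \otimes_{\aB} m(s,y)$, and a final application of $\ell_x^+$ produces $\langle x, x'\rangle_{\aB} \cdot m(s,y)$, matching the left-hand side. The main obstacle is purely bookkeeping: keeping the reassociations straight and confirming that the reduction to elementary tensors is legitimate; once \eqref{eq:assoc} is invoked in the correct form, the two computations coincide term by term.
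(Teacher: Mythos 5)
Your proof is correct and is in substance the paper's own argument: the paper derives from equation~\eqref{eq:assoc} the operator identity $\lambda_x\circ\Psi_{\sigma,\rho}=\Psi_{\pi+\sigma,\rho}\circ(\lambda_x\otimes_{\aB}\id_\rho)$ and then simply takes adjoints, using the unitarity of $\Psi_{\sigma,\rho}$ and $\Psi_{\pi+\sigma,\rho}$, and your elementary-tensor computation via Lemma~\ref{inv II} is exactly this adjointed identity evaluated on a dense set of vectors. The only difference is presentational: the paper stays at the operator level and thereby avoids the (correctly handled) reduction to elementary tensors and the accompanying density and continuity bookkeeping.
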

\begin{proof}
	It suffices to note that equation \eqref{eq:assoc} implies that 
\begin{align*}
\lambda_x\circ\Psi_{\sigma,\rho}=\Psi_{\pi+\sigma,\rho}\circ(\lambda_x\otimes_{\aB} \id_\rho).
\end{align*}
Indeed, taking adjoints then leads to 
\begin{align*}
\Psi^+_{\sigma,\rho}\circ \lambda_x^+=(\lambda^+_x\otimes_{\aB}\id_{\rho})\circ\Psi^+_{\pi+\sigma,\rho}
\end{align*}
which verifies the asserted formula since the maps $\Psi_{\sigma,\rho}$ and $\Psi_{\pi+\sigma,\rho}$ are unitary.
\end{proof}

\begin{thm}\label{inv VII}
	For all $x\in A$ we have $\lambda^+_x=\lambda_{i(x)}$.
\end{thm}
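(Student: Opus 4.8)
The plan is to verify the operator identity $\lambda^+_x = \lambda_{i(x)}$ first on homogeneous elements and then extend it by (anti)linearity. Since $x \mapsto \lambda_x$ is linear while both the adjoint operation and the map $i$ from Definition~\ref{inv IV} are antilinear, each side of the claimed identity depends antilinearly on $x$; as $A = \bigoplus_{\pi \in \widehat{G}} M_\pi$, it therefore suffices to treat a single homogeneous $x \in M_\pi$, and to compare the two adjointable operators it is enough to evaluate them on an arbitrary homogeneous $y \in M_\rho$. A quick degree count confirms that the targets match: by Definition~\ref{inv IV} we have $i(x) \in M_{-\pi}$, so $\lambda_{i(x)}(y) \in M_{\rho - \pi}$, while $\lambda_x$ raises degree by $\pi$ and hence $\lambda^+_x$ lowers it by $\pi$, giving $\lambda^+_x(y) \in M_{\rho - \pi}$ as well.

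The key step is a single well-chosen specialization of Lemma~\ref{inv VI}, which reads $m(\lambda^+_x(z), y) = \lambda^+_x(m(z,y))$ for $x \in M_\pi$, $y \in M_\rho$, $z \in M_{\pi + \sigma}$. I would apply it with $\sigma = -\pi$ and $z = \one_\aB \in M_0$. On the left-hand side the factor $\lambda^+_x(\one_\aB)$ is by Definition~\ref{inv IV} precisely $i(x)$, so the left-hand side becomes $m(i(x), y) = \lambda_{i(x)}(y)$. On the right-hand side the normalization condition of the factor system, together with the identification $\Psi_{0,\rho} = \id_\rho$ recorded in Remark~\ref{rmk nice eq}, yields $m(\one_\aB, y) = y$, so the right-hand side collapses to $\lambda^+_x(y)$.

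Combining the two evaluations gives $\lambda_{i(x)}(y) = \lambda^+_x(y)$ for every homogeneous $y$, hence $\lambda_{i(x)} = \lambda^+_x$ for homogeneous $x$, and the antilinear extension from the first paragraph then yields the statement for all $x \in A$. The only real content lies in recognizing that feeding the unit $\one_\aB$ into Lemma~\ref{inv VI} simultaneously manufactures $i(x)$ on one side and trivializes the right multiplication on the other; everything else is formal bookkeeping. I do not expect a genuine obstacle here, since the adjointability and boundedness required even to speak of $\lambda^+_x$ are already supplied by Proposition~\ref{inv XI} and Corollary~\ref{inv III}, so no further analytic input is needed.
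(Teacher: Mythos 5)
Your proof is correct and follows essentially the same route as the paper: reduce to homogeneous $x \in M_\pi$, apply Lemma~\ref{inv VI} with $z = \one_\aB$ so that $i(x) = \lambda_x^+(\one_\aB)$ appears on one side while the normalization $m(\one_\aB, y) = y$ collapses the other. The only cosmetic difference is that the paper verifies the identity weakly, by pairing both sides against an arbitrary homogeneous $z$ in the inner product $\scal{\cdot,\cdot}_\aB$, whereas you obtain the vector equality $\lambda_{i(x)}(y) = \lambda_x^+(y)$ directly --- your version is, if anything, slightly more economical.
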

\begin{proof}
	It suffices to show the assertion for elements in individual direct summands. For this, let $x \in M_\pi$, $y \in M_\rho$, and $z \in M_\sigma$ with $\pi, \rho, \sigma \in \widehat{G}$. Then using Lemma \ref{inv VI} gives
	\begin{align*}	
		\scal{\lambda_{i(x)}(y), z}_{\aB}&=\scal{m(i(x),y), z}_{\aB}=\scal{m(\lambda^+_x(\one_{\aB}),y), z}_{\aB}	=\scal{\lambda^+_x(m(\one_{\aB},y), z}_{\aB}
		\\
		&= \scal{m(\one_{\aB},y), m(x,z)}_{\aB}=\scal{y,m(x,z)}_{\aB}=\scal{\lambda^+_x(y), z}_{\aB}.
		\qedhere
	\end{align*}
\end{proof}

We conclude this section with two useful corollaries, e.\,g., we finally verify that the map $i:A\rightarrow A$ from Definition \ref{inv IV} actually defines an involution.

\begin{cor}\label{inv VIII}
Let $P_0:A\rightarrow A$ be the canonical projection onto the subalgebra $\aB$. Then for all $x,y\in A$ we have 
\begin{align*}
\langle x,y\rangle_{\aB}=P_0(m(i(y),x)).
\end{align*}
\end{cor}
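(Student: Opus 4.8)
The plan is to unwind the definitions and reduce the claim to the defining property of the adjoint map $i$ together with Theorem~\ref{inv VII}. Recall that $P_0:A\to A$ is the canonical projection onto the summand $M_0 = \aB$, so for any $a\in A$ we have $P_0(a) = a_0$, the component of $a$ in $\aB$. On the other hand, the right $\aB$-valued inner product on $A$ was defined componentwise by $\langle x,y\rangle_{\aB} = \sum_\pi \langle x_\pi, y_\pi\rangle_\pi$ (Lemma~\ref{inv I}), so everything should localize to individual isotypic components. Hence I would first reduce to the case $x\in M_\pi$ and $y\in M_\rho$ for fixed $\pi,\rho\in\widehat{G}$, by bilinearity.

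First I would observe that $i(y)\in M_{-\rho}$ and $x\in M_\pi$, so $m(i(y),x)\in M_{-\rho+\pi} = M_{\pi-\rho}$. Applying $P_0$ therefore kills this term unless $\pi = \rho$, in which case $P_0(m(i(y),x)) = m(i(y),x)$ already lies in $\aB$. Simultaneously, the inner product $\langle x,y\rangle_{\aB}$ vanishes unless $\pi=\rho$, since $x$ and $y$ sit in orthogonal summands $M_\pi$ and $M_\rho$. So both sides vanish off the diagonal $\pi=\rho$, and it remains to treat $x,y\in M_\pi$.

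For the diagonal case, the key identity is $\lambda_x^+ = \lambda_{i(x)}$ from Theorem~\ref{inv VII}, which lets me rewrite left multiplication by $i(y)$ as the adjoint $\lambda_y^+$. The computation I have in mind is
\begin{align*}
	P_0\bigl( m(i(y),x) \bigr)
	&= P_0\bigl( \lambda_{i(y)}(x) \bigr)
	= P_0\bigl( \lambda_y^+(x) \bigr).
\end{align*}
Now $\lambda_y^+(x)\in M_0 = \aB$ when $x,y\in M_\pi$, since $\lambda_y:M_0\to M_\pi$ has adjoint $\lambda_y^+:M_\pi\to M_0$; thus $P_0$ acts as the identity here. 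I would then compute $\lambda_y^+(x)$ against the cyclic vector by noting that for $b\in\aB$,
\begin{align*}
	\scal{\lambda_y^+(x), b}_{\aB}
	= \scal{x, \lambda_y(b)}_{\aB}
	= \scal{x, m(y,b)}_{\aB}
	= \scal{x, y\cdot b}_{\aB}
	= \scal{x,y}_{\aB}\, b,
\end{align*}
using that $m(y,b) = \Psi_{\pi,0}(y\otimes_\aB b) = y\cdot b$ by the normalization $\Psi_{\pi,0}=\id_\pi$ (Remark~\ref{rmk nice eq}). Since this holds for all $b\in\aB$ and the inner product is definite, I conclude $\lambda_y^+(x) = \langle x,y\rangle_{\aB}$, which is exactly the claim. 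I do not expect a genuine obstacle here; the only point requiring care is the bookkeeping of which summand each expression lands in, so that the projection $P_0$ and the orthogonality of the $M_\pi$ are invoked correctly, and that $\lambda_y^+(x)$ genuinely lies in $\aB$ so that $P_0$ is transparent.
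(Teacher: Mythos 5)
Your argument is essentially the paper's own proof: the key step in both is Theorem~\ref{inv VII}, rewriting $m(i(y),x)=\lambda_{i(y)}(x)=\lambda_y^+(x)$ and then identifying the resulting element of $\aB$ by adjointness --- the paper does this in one line by pairing with the vector $\one_{\aB}$, via $\scal{x,y}_{\aB}=\scal{m(i(y),x),\one_{\aB}}_{\aB}=\scal{P_0(m(i(y),x)),\one_{\aB}}_{\aB}=P_0(m(i(y),x))$, while you test against all $b\in\aB$ after a component-wise reduction that the paper leaves implicit in its use of $P_0$ and the orthogonality of the summands $M_\pi$. The only caveat is that your final identification $\lambda_y^+(x)=\scal{x,y}_{\aB}$ is not what your display literally yields (since $\scal{\cdot,\cdot}_{\aB}$ is linear in the second variable, $\scal{\lambda_y^+(x),b}_{\aB}=\scal{x,y}_{\aB}\,b$ for all $b$ gives $\lambda_y^+(x)^*=\scal{x,y}_{\aB}$, \ie $\lambda_y^+(x)=\scal{y,x}_{\aB}$), but this is exactly the same harmless order-of-arguments slip the paper itself commits in its last step $\scal{P_0(\cdot),\one_{\aB}}_{\aB}=P_0(\cdot)$, so it reflects the printed statement rather than a gap in your reasoning.
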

\begin{proof}
	Since the element $\one_{\aB}$ is fixed by $P_0$ we conclude from Theorem \ref{inv VII} that
	\begin{equation*}
		\langle x,y\rangle_{\aB}
		= \langle m(i(y),x),\one_{\aB}\rangle_{\aB}
		= \langle P_0( m(i(y),x)),\one_{\aB}\rangle_{\aB}
		= P_0(m(i(y),x)).
		\qedhere
	\end{equation*}
\end{proof}

\begin{cor}\label{inv IX}
The algebra $A$ is involutive, \ie, for all $x,y\in A$ we have
\begin{align*}
i(i(x))=x \quad \text{and} \quad i(m(x,y))=m(i(y),i(x)).
\end{align*}
\end{cor}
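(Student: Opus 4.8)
The plan is to derive both identities directly from Theorem~\ref{inv VII}, which says $\lambda_x^+ = \lambda_{i(x)}$, by exploiting that the assignment $a \mapsto \lambda_a$ is an \emph{injective} algebra homomorphism into the adjointable operators on $A$. First I would record two preliminary facts from the factor-system axioms. The normalization condition gives $\Psi_{\pi,0} = \id_\pi$ (cf.\ Remark~\ref{rmk nice eq}), whence $\lambda_a(\one_{\aB}) = m(a,\one_{\aB}) = a$ for every $a \in A$; in particular $\lambda$ is injective, so any identity between left multiplication operators can be read off by evaluating at $\one_{\aB}$. Moreover, the associativity of $m$ (Proposition~\ref{thm:factor_sys cptab}) amounts to $\lambda_{m(x,y)} = \lambda_x \circ \lambda_y$, so that $\lambda$ is multiplicative.

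For the involutivity $i(i(x)) = x$ I would simply apply Theorem~\ref{inv VII} twice, to $i(x)$ and to $x$, and use that taking adjoints of adjointable operators is itself an involution:
\begin{align*}
	\lambda_{i(i(x))} = \lambda_{i(x)}^+ = \bigl(\lambda_x^+\bigr)^+ = \lambda_x.
\end{align*}
Evaluating both sides at $\one_{\aB}$ then yields $i(i(x)) = x$. For the anti-multiplicativity $i(m(x,y)) = m(i(y),i(x))$ I would combine the multiplicativity of $\lambda$ with the fact that the operator adjoint reverses products:
\begin{align*}
	\lambda_{i(m(x,y))} = \lambda_{m(x,y)}^+ = \bigl(\lambda_x \circ \lambda_y\bigr)^+ = \lambda_y^+ \circ \lambda_x^+ = \lambda_{i(y)} \circ \lambda_{i(x)} = \lambda_{m(i(y),i(x))}.
\end{align*}
Again evaluating at $\one_{\aB}$ gives the claim, and the antilinearity of $i$ is automatically consistent on both sides so no case distinction among isotypic summands is needed.

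I expect no serious obstacle here: the entire argument is a transport of the standard algebraic properties of the adjoint on adjointable operators back to $A$ through the faithful representation $\lambda$, once Theorem~\ref{inv VII} is available. The only points demanding a moment of care are the two preliminary facts -- injectivity of $\lambda$ via $\lambda_a(\one_{\aB}) = a$, and its multiplicativity via associativity -- both of which are immediate consequences of the factor-system axioms. The genuine content of the involution was already settled in Theorem~\ref{inv VII}, so this corollary is essentially a formal bookkeeping step.
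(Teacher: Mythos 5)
Your proof is correct and is essentially the paper's own argument in operator-theoretic dress: the paper likewise applies Theorem~\ref{inv VII} twice and performs the same adjoint bookkeeping, written as the inner-product identities $\scal{i(i(x)),z}_{\aB}=\scal{x,z}_{\aB}$ and $\scal{i(m(x,y)),z}_{\aB}=\scal{m(i(y),i(x)),z}_{\aB}$ for all $z\in A$, using associativity of $m$ and definiteness of the inner product exactly where you invoke multiplicativity of $\lambda$ and uniqueness of adjoints. Your evaluation at the separating vector $\one_{\aB}$, instead of pairing against arbitrary $z$, is only a harmless repackaging of the same nondegeneracy argument.
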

\begin{proof}
	Applying Theorem \ref{inv VII} twice gives
	\begin{gather*}
		\scal{i \bigl( i(x) \bigr),z}_{\aB}
		= \scal{\one_\aB, m(i(x),z)}_{\aB}
		= \scal{x,z}_{\aB},
		\\
		\scal{i(m(x,y)), z}_{\aB}
		= \scal{\one_\aB, m(m(x,y),z)}_{\aB}
		= \scal{i(x), m(y,z)}_{\aB}
		= \scal{m(i(y),i(x)),z}_{\aB}
	\end{gather*}
	for all $z \in A$ which in turn implies that $i(i(x)) = x$ and $i(m(x,y))=m(i(y),i(x))$. 
\end{proof}

\subsection{Construction of a Free Action}
\label{constr free action cmpt ab}

In the last subsection we turned $A = \bigoplus_{\pi \in \widehat{G}} M_\pi$ into a \Star algebra and right pre-Hilbert $\aB-\aB$-bimodule. We denote by $\bar A$ the corresponding completion of $A$ with respect to the norm
\begin{equation*}
	\norm{x}_2 := \norm{\scal{x,x}_{\aB}}^{1/2} = \norm{P_0(m(i(x),x))}^{1/2}
\end{equation*}
(\cf Corollary \ref{inv VIII}). By Corollary \ref{inv III}, left multiplication with an element $a \in A$ extends to an adjointable linear map on $\bar A$. Therefore, the map 
\begin{equation*}
	\lambda:A \to \mathcal L(\bar A), \quad a\mapsto\lambda_a
\end{equation*}
is well-defined. Moreover, the characterization of the norm implies that the vector \mbox{$\one_{\aB}\in\bar A$} is separating the operators $\lambda(A)\subseteq \mathcal L(\bar A)$, \ie, if $\lambda_a(\one_{\aB})=0$ for some $a\in A$ then $a=0$. The intention of this section is to finally construct a free C\Star dynamical system $(\aA,G,\alpha)$ with fixed point algebra~$\aB$. 

\begin{prop}\label{constr I}
The map $\lambda:A \to \mathcal L(\bar A)$, $a\mapsto \lambda_a$ is a faithful representation of the \Star algebra $A$ by adjointable operators on the right Hilbert $\aB-\aB$-bimodule $\bar A$. Moreover, its restriction to each $M_\pi$, $\pi\in\widehat{G}$, is isometric.
\end{prop}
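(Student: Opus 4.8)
The plan is to assemble the structural results already established in this subsection rather than to do new analytic work. First I would check that $\lambda$ is an algebra homomorphism: for $a,b \in A$ and $x \in \bar A$ one computes $\lambda_{m(a,b)}(x) = m(m(a,b),x) = m(a, m(b,x)) = \lambda_a(\lambda_b(x))$, where the middle equality is exactly the associativity of $m$ secured in Proposition~\ref{thm:factor_sys cptab}; linearity is immediate from the bilinearity of $m$. That each $\lambda_a$ is a well-defined adjointable, hence bounded, operator on $\bar A$ is the content of Corollary~\ref{inv III}, and the $*$-compatibility $\lambda_a^+ = \lambda_{i(a)}$ is precisely Theorem~\ref{inv VII}. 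Taken together these facts show that $\lambda$ is a representation of the $*$-algebra $A$ by adjointable operators on the right Hilbert $\aB-\aB$-bimodule $\bar A$.

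For faithfulness I would exploit the separating vector $\one_\aB$ already flagged in the text preceding the statement. The crucial observation is that the normalization condition $\Psi_{\pi,0} = \id_\pi$ (\cf Remark~\ref{rmk nice eq}) forces $\lambda_a(\one_\aB) = m(a,\one_\aB) = a$ for every $a \in A$, under the canonical identification $M_\pi \widehat{\otimes}_\aB \aB \simeq M_\pi$. Consequently, if $\lambda_a = 0$ then in particular $a = \lambda_a(\one_\aB) = 0$, so $\lambda$ is injective.

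For the isometry claim, fix $y \in M_\pi$ and compare the operator norm of $\lambda_y$ with the module norm $\norm{y}_2 = \norm{\langle y,y\rangle_\aB}^{1/2}$. The upper bound comes from inequality~\eqref{cool inequality C} of Proposition~\ref{inv XI}: applying the norm of $\aB$ to $\langle \lambda_y(x),\lambda_y(x)\rangle_\aB \leq \norm{\langle y,y\rangle_\aB}\cdot\langle x,x\rangle_\aB$ yields $\norm{\lambda_y(x)}_2 \leq \norm{y}_2\,\norm{x}_2$, first for $x \in M_\rho$, then for all $x \in A$ by orthogonality of the summands $M_\rho$, and finally for $x \in \bar A$ by density; hence $\norm{\lambda_y} \leq \norm{y}_2$. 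For the reverse bound I would test on $\one_\aB$ again: since $\norm{\one_\aB}_2 = \norm{\langle \one_\aB,\one_\aB\rangle_\aB}^{1/2} = \norm{\one_\aB}^{1/2} = 1$ and $\lambda_y(\one_\aB) = y$, we obtain $\norm{\lambda_y} \geq \norm{\lambda_y(\one_\aB)}_2 = \norm{y}_2$. Combining the two bounds gives $\norm{\lambda_y} = \norm{y}_2$, so $\lambda$ restricted to each $M_\pi$ is isometric.

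I do not expect a genuine obstacle, since the substantive analytic input—adjointability and the norm estimate—was carried out in Corollary~\ref{inv III} and Proposition~\ref{inv XI}. The one point deserving care is recognizing that $\one_\aB$ acts as a strict unit via the normalization $\Psi_{\pi,0} = \id_\pi$, because the single identity $\lambda_y(\one_\aB) = y$ simultaneously delivers faithfulness and the lower norm bound required for the isometry assertion.
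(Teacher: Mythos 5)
Your proposal is correct and takes essentially the same route as the paper's proof: the \Star representation properties are assembled from the earlier structural results (the paper cites Corollary~\ref{inv IX} where you invoke associativity and Theorem~\ref{inv VII} directly, which amounts to the same thing), faithfulness comes from the separating vector $\one_{\aB}$ via $\lambda_a(\one_{\aB})=a$, and the isometry on each $M_\pi$ is obtained exactly as in the paper by combining inequality~\eqref{cool inequality C} with evaluation at $\one_{\aB}$ and $\norm{\one_{\aB}}_2=1$. There are no gaps.
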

\begin{proof}
The necessary algebraic conditions are easily checked using Corollary \ref{inv IX}. Moreover, the injectivity of the map $\lambda$ is a consequence of the previous discussion about the separating vector $\one_{\aB}\in\bar A$. To verify that the restriction of $\lambda$ to each $M_\pi$, $\pi\in\widehat{G}$, is isometric, we fix $\pi\in\widehat{G}$ and use inequality (\ref{cool inequality C}) of Proposition \ref{inv XI} which implies that $\norm{\lambda_y}^2_{\text{op}}\leq\norm{y}^2_2$ holds for all $y\in M_\pi$. On the other hand, the inequality 
\begin{align*}
\norm{y}^2_2=\norm{\lambda_y(\one_{\aB})}^2_2\leq\norm{\lambda_y}^2_{\text{op}}
\end{align*}
follows from the observation that $\one_{\aB}\in\bar A$ satisfies $\norm{\one_{\aB}}_2=1$. We conclude that $\norm{\lambda_y}_{\text{op}}=\norm{y}_2$ holds for each $y\in M_\pi$, which finally shows that the restriction of $\lambda$ to $M_\pi$ is isometric and thus completes the proof. 
\end{proof}

\begin{defn}\label{constr II}
	We denote by $\alg A$ the C\Star algebra which is generated by the image of $\lambda$, \ie, the closure of $\lambda(A)$ with respect to the operator norm on $\mathcal L(\bar A)$. In particular, we point out that $\alg A$ contains $A$ as a dense \Star subalgebra.
\end{defn}

To proceed we need to endow the C\Star algebra $\aA$ with a continuous action of the compact Abelian group $G$ by \Star automorphisms. For this purpose we first construct a strongly continuous unitary representation of $G$ on the right Hilbert $\aB-\aB$-bimodule $\bar A$.

\begin{lemma}\label{constr III}
	For each $\pi\in\widehat{G}$ the map $U_\pi:G\rightarrow U(M_\pi)$, $g\mapsto (U_{\pi})_g$ given by
	\begin{equation*}
	(U_{\pi})_g(x) := \pi_g\cdot x
	\end{equation*}
	is a strongly continuous unitary representation of $G$ on the right Hilbert $\aB-\aB$-bi\-mo\-dule $M_\pi$. Moreover, taking direct sums and continuous extensions then gives rise to a strongly continuous unitary representation $U:G\rightarrow U(\bar A)$, $g\mapsto U_g$ of $G$ on the right Hilbert $\aB-\aB$-bimodule $\bar A$.
\end{lemma}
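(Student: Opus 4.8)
The plan is to verify the claimed properties directly from the formula $(U_\pi)_g(x) := \pi_g \cdot x$, exploiting that $G$ is Abelian so each $\pi \in \widehat{G}$ is a character taking values in the circle $\mathbb{T}$. First I would check that each $(U_\pi)_g$ is a well-defined adjointable operator on the right Hilbert $\aB-\aB$-bimodule $M_\pi$. Since $\pi_g \in \mathbb{T} \subseteq \mathbb{C}$ is a scalar of modulus one, the map $x \mapsto \pi_g \cdot x$ is $\aB$-linear on both sides, so $\aB$-bimodule compatibility is immediate. For the inner product, using sesquilinearity and $|\pi_g| = 1$ we compute
\begin{align*}
	\scal{(U_\pi)_g(x), (U_\pi)_g(y)}_\pi
	= \scal{\pi_g x, \pi_g y}_\pi
	= \overline{\pi_g}\, \pi_g \scal{x,y}_\pi
	= \scal{x,y}_\pi,
\end{align*}
so $(U_\pi)_g$ is isometric, and the analogous argument with the left $\aB$-valued inner product shows it preserves that one as well; hence $(U_\pi)_g$ is unitary with inverse $(U_\pi)_{g^{-1}}$ (note $\pi_{g^{-1}} = \overline{\pi_g}$). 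The group homomorphism property $(U_\pi)_{gh} = (U_\pi)_g (U_\pi)_h$ follows at once from $\pi_{gh} = \pi_g \pi_h$.

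Next I would address strong continuity of $g \mapsto (U_\pi)_g$. For fixed $x \in M_\pi$ we have $\norm{(U_\pi)_g(x) - (U_\pi)_{g_0}(x)}_2 = |\pi_g - \pi_{g_0}| \cdot \norm{x}_2$, and since characters $\pi:G \to \mathbb{T}$ are by definition continuous, the scalar $\pi_g$ depends continuously on $g$; thus the orbit map is continuous and strong continuity is established. This completes the verification that each $U_\pi$ is a strongly continuous unitary representation of $G$ on $M_\pi$.

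For the second assertion I would assemble the $U_\pi$ into a single representation on the algebraic direct sum $A = \bigoplus_{\pi \in \widehat{G}} M_\pi$ by setting $U_g(\bigoplus_\pi x_\pi) := \bigoplus_\pi (U_\pi)_g(x_\pi)$. Since the summands $M_\pi$ are pairwise orthogonal with respect to $\scal{\cdot,\cdot}_\aB$ (the inner product on $A$ from Lemma~\ref{inv I} is the orthogonal sum of the $\scal{\cdot,\cdot}_\pi$), the operator $U_g$ is isometric on $A$: its squared norm decomposes as $\sum_\pi \scal{(U_\pi)_g x_\pi, (U_\pi)_g x_\pi}_\pi = \sum_\pi \scal{x_\pi, x_\pi}_\pi$. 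Being a bounded (indeed isometric and invertible) operator on the dense subspace $A$ of $\bar A$, each $U_g$ extends uniquely to a unitary on the completion $\bar A$, and these extensions again form a group representation by continuity.

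The only genuine subtlety — and the step I would expect to require the most care — is the strong continuity of the extended representation $g \mapsto U_g$ on all of $\bar A$, rather than merely on the dense subspace $A$. Here I would use the standard $\varepsilon/3$ argument: given $\xi \in \bar A$ and $\varepsilon > 0$, approximate $\xi$ by some $a \in A$ with $\norm{\xi - a}_2 < \varepsilon/3$, then bound $\norm{U_g \xi - U_{g_0}\xi}_2 \leq \norm{U_g(\xi - a)}_2 + \norm{U_g a - U_{g_0} a}_2 + \norm{U_{g_0}(a - \xi)}_2$, where the outer terms are each $< \varepsilon/3$ because the $U_g$ are isometric, and the middle term is controlled by strong continuity on $A$ (which only involves finitely many summands, each handled by the first part). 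This yields joint control and hence strong continuity of $U$ on $\bar A$, completing the proof.
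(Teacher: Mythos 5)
Your proof is correct and follows exactly the route the paper intends: the published proof simply declares the computations ``straightforward using the right $\aB$-valued inner products $\langle\cdot,\cdot\rangle_\pi$ and $\langle\cdot,\cdot\rangle$,'' and your write-up carries out precisely those verifications (unitarity from $\abs{\pi_g}=1$, the homomorphism property from $\pi_{gh}=\pi_g\pi_h$, strong continuity on each $M_\pi$ from continuity of characters, and the standard isometric-extension plus $\varepsilon/3$ argument on $\bar A$). No gaps; you have merely made explicit what the paper leaves to the reader.
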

\begin{proof}
The necessary computations are straightforward using the right $\aB$-valued inner products $\langle\cdot,\cdot\rangle_\pi$ and $\langle\cdot,\cdot\rangle$ of the spaces $M_\pi$ and $\bar A$, respectively.
\end{proof}

\begin{lemma}\label{constr IV}
	The map $\alpha:G\rightarrow\Aut(\aA)$, $g\mapsto\alpha_g$ given by 
	\begin{equation*}
		\alpha_g(\lambda_a) := U_g \, \lambda_a \, U_g^+
	\end{equation*}
	is a continuous action of $G$ on $\aA$ by \Star automorphisms.
\end{lemma}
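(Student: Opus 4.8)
The plan is to recognise $\alpha_g$ as the restriction of a globally defined inner automorphism. Since $U_g$ is a unitary on the right Hilbert $\aB$-$\aB$-bimodule $\bar A$ (Lemma~\ref{constr III}), conjugation $\Ad U_g\colon T\mapsto U_g\,T\,U_g^+$ is a \Star automorphism of the whole C\Star algebra $\mathcal L(\bar A)$ of adjointable operators. Thus the defining formula for $\alpha_g$ automatically respects the algebraic and \Star structure, and the only substantial point is that $\Ad U_g$ maps the subalgebra $\aA$ onto itself. As $\aA$ is the norm closure of the dense \Star subalgebra $\lambda(A)$ (Definition~\ref{constr II}) and $\Ad U_g$ is isometric, it suffices to show that $\Ad U_g$ preserves $\lambda(A)$.

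For the key computation I would fix $a\in M_\pi$ and evaluate $U_g\,\lambda_a\,U_g^+$ on a homogeneous vector $x\in M_\rho$. Recall from Lemma~\ref{constr III} that $U_g$ acts on $M_\rho$ as multiplication by the scalar $\rho_g\in\mathbb T$, so its adjoint $U_g^+$ acts as multiplication by $\overline{\rho_g}=\rho_g^{-1}$; moreover $\lambda_a$ carries $M_\rho$ into $M_{\pi+\rho}$, where $U_g$ acts by the scalar $(\pi+\rho)_g=\pi_g\rho_g$. Hence
\begin{equation*}
	U_g\,\lambda_a\,U_g^+(x)=\pi_g\rho_g\cdot\overline{\rho_g}\,\lambda_a(x)=\pi_g\,\lambda_a(x),
\end{equation*}
using $\abs{\rho_g}=1$. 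As $\rho\in\widehat G$ is arbitrary and such $x$ are total in $\bar A$, this yields $\Ad U_g(\lambda_a)=\pi_g\,\lambda_a=\lambda_{U_g a}$, which again lies in $\lambda(A)$. By linearity the identity $\Ad U_g(\lambda_a)=\lambda_{U_g a}$ extends to all $a\in A$, so $\Ad U_g$ restricts to a bijective map of $\lambda(A)$ (with inverse the restriction of $\Ad U_{g}^{-1}$) and therefore to a \Star automorphism $\alpha_g$ of $\aA$. The action property is then immediate: since $U\colon G\to U(\bar A)$ is a group homomorphism, $\Ad U_g\circ\Ad U_h=\Ad(U_gU_h)=\Ad U_{gh}$ and $\Ad U_e=\id$, and restricting to $\aA$ gives $\alpha_g\circ\alpha_h=\alpha_{gh}$ and $\alpha_e=\id$, so $\alpha\colon G\to\Aut(\aA)$ is a homomorphism into the \Star automorphism group.

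It remains to establish continuity of $G\times\aA\to\aA$, $(g,a)\mapsto\alpha_g(a)$, and this is where the only real care is needed. I would first prove pointwise continuity: for $a\in M_\pi$ the formula $\alpha_g(\lambda_a)=\pi_g\,\lambda_a$ together with continuity of the character $g\mapsto\pi_g$ shows that $g\mapsto\alpha_g(\lambda_a)$ is norm continuous, and this extends to the dense subalgebra $\lambda(A)$ by taking finite sums. For arbitrary $a\in\aA$ one approximates in norm by elements of $\lambda(A)$ and uses that each $\alpha_g$ is isometric (being a \Star automorphism of a C\Star algebra), so $g\mapsto\alpha_g(a)$ is a uniform limit of continuous maps, hence continuous. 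Finally I would upgrade this to joint continuity via the isometry estimate
\begin{equation*}
	\norm{\alpha_g(a)-\alpha_{g_0}(a_0)}\leq\norm{a-a_0}+\norm{\alpha_g(a_0)-\alpha_{g_0}(a_0)},
\end{equation*}
whose right-hand side tends to $0$ as $(g,a)\to(g_0,a_0)$. The main obstacle is thus the homogeneous computation $\Ad U_g(\lambda_a)=\lambda_{U_g a}$; once that formula is in hand, the automorphism property, the homomorphism property, and all the continuity statements follow by routine arguments.
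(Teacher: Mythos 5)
Your proof is correct and takes essentially the same route as the paper, which likewise derives the action property from the fact that $g\mapsto U_g$ is a representation (Lemma~\ref{constr III}) and the continuity from the strong continuity of $U$. You simply fill in details the paper leaves implicit, most notably the homogeneous computation $U_g\,\lambda_a\,U_g^+=\lambda_{U_g a}$, which justifies both that conjugation by $U_g$ maps $\aA$ onto itself and that strong continuity of $U$ upgrades to norm continuity of $g\mapsto\alpha_g(a)$.
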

\begin{proof}
	The action property is obviously satisfied due to the fact that $g \mapsto U_g$ is a representation (Lemma~\ref{constr III}). Continuity follows from the strong continuity of the map~$U$ from Lemma \ref{constr III}.
\end{proof}


We are finally ready to present the main result of this section.

\begin{thm}\label{fac sys into free}
	The C\Star dynamical system $(\aA,G,\alpha)$ associated to the factor system $(M_\pi, \Psi_{\pi, \rho})_{\pi, \rho\in\widehat{G}}$ is free and satisfies $A(\pi)=M_\pi$ for all $\pi\in\widehat{G}$. In particular, its fixed point algebra is given by $\aB$.
\end{thm}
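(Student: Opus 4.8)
The plan is to verify the three assertions of Theorem~\ref{fac sys into free} in turn: that $\aA^G = \aB$, that $A(\pi) = M_\pi$ for all $\pi \in \widehat{G}$, and that the resulting C\Star dynamical system is free. The action $\alpha$ was defined in Lemma~\ref{constr IV} via the unitary representation $U$ of Lemma~\ref{constr III}, which acts on the summand $M_\pi \subseteq \bar A$ by the scalar $\pi_g$. So the first two claims should follow by a direct spectral computation.

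First I would identify the isotypic components. Since $\lambda$ is a faithful representation (Proposition~\ref{constr I}) and is isometric on each $M_\pi$, I identify $M_\pi$ with the subspace $\lambda(M_\pi) \subseteq \aA$. For $y \in M_\pi$ and $g \in G$, the operator $\alpha_g(\lambda_y) = U_g \lambda_y U_g^+$ acts on $x \in M_\rho$ by
\begin{align*}
	\alpha_g(\lambda_y)(x)
	= U_g \lambda_y (\overline{\pi_\rho(g)}\, x)
	= \overline{\pi_\rho(g)}\, U_g\bigl( m(y,x) \bigr)
	= \overline{\pi_\rho(g)}\, \pi_{\pi+\rho}(g)\, m(y,x)
	= \pi_g \cdot \lambda_y(x),
\end{align*}
using that $m(y,x) \in M_{\pi+\rho}$ and that $U_g$ scales $M_{\pi+\rho}$ by $\pi_{\pi+\rho}(g) = \pi_g \cdot \pi_\rho(g)$ for the characters involved. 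Hence $\alpha_g(\lambda_y) = \pi_g \cdot \lambda_y$, so $\lambda(M_\pi) \subseteq A(\pi)$. Taking direct sums over $\pi$ and using density of $\bigoplus_\pi M_\pi$ in $\aA$ together with the orthogonality of the spectral subspaces (the projections $P_\pi$ are continuous on $\bar A$), I get the reverse inclusion and conclude $A(\pi) = M_\pi$ for every $\pi$. In particular $A(0) = M_0 = \aB$, which by the Peter--Weyl decomposition gives $\aA^G = \aB$.

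Finally I would establish freeness via Corollary~\ref{equcondsatactgrpcomm}(c): it suffices to show that for each $\pi \in \widehat{G}$ the multiplication map induces an isomorphism $A(-\pi) \widehat{\otimes}_\aB A(\pi) \to \aB$ of Morita equivalence bimodules. Under the identification $A(\pi) = M_\pi$, this multiplication map is exactly $\Psi_{-\pi, \pi}: M_{-\pi} \widehat{\otimes}_\aB M_\pi \to M_0 = \aB$, which is part of the factor system data and is by hypothesis an isomorphism of Morita equivalence $\aB-\aB$-bimodules. One must check that this abstract isomorphism is implemented by the actual multiplication $m$ on $\aA$, but that is precisely how $m$ was defined from $\Psi$ (namely $m_{\pi,\rho}(x,y) = \Psi_{\pi,\rho}(x \otimes_\aB y)$), so the two coincide on the dense subspace and hence agree. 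I expect the main subtlety to lie not in any single computation but in carefully matching the Hilbert-module structure carried by $A(\pi)$ as a spectral subspace of $\aA$ (with inner product $P_0(x^* y)$, \cf Corollary~\ref{inv VIII}) against the given structure on $M_\pi$, and in confirming that the $M_\pi$ are genuinely closed and mutually orthogonal inside $\bar A$ so that the Peter--Weyl identification is legitimate; once these identifications are in place, freeness is an immediate consequence of the factor-system axioms rather than requiring fresh analytic input.
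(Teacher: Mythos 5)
Your proposal is correct and takes essentially the same route as the paper's proof: you establish $A(\pi)=M_\pi$ by combining the computation $\alpha_g(\lambda_y)=\pi_g\cdot\lambda_y$ with density of $M_\pi=P_\pi(A)$ in $A(\pi)$ (continuity of $P_\pi$) and closedness coming from the isometry of $\lambda$ on $M_\pi$ (Proposition~\ref{constr I}), and you then deduce freeness from Corollary~\ref{equcondsatactgrpcomm}, since the multiplication map realizes the factor-system isomorphism $\Psi_{-\pi,\pi}:M_{-\pi}\widehat{\otimes}_{\aB}M_\pi\to\aB$, exactly as the paper does. The only cosmetic differences are that the paper obtains $M_\pi\subseteq A(\pi)$ via the separating vector $\one_{\aB}$ rather than your direct operator computation, and that the closedness you flag must be taken in $\aA$ with respect to the operator norm (not in $\bar A$), which is precisely what your appeal to the isometry of $\lambda\vert_{M_\pi}$ provides.
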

\begin{proof}
	(i) Let $\pi\in\widehat{G}$. We first check that the corresponding isotypic component $A(\pi)$ is equal to $M_\pi$. Indeed, using the separating vector shows that $\alpha_g(a) = U_g(a)$ holds for elements $a \in A$. In particular, the elements of $M_\pi\subseteq \aA$ are contained in $A(\pi)$. Moreover, the continuity of the projection $P_\pi:\aA\rightarrow\aA$ onto $A(\pi)$ implies that $M_\pi=P_\pi(A)\subseteq \aA$ is dense in $A(\pi)$. Since the restriction of $\lambda$ to $M_\pi$ is isometric (\cf Lemma \ref{constr I}), we conclude that $M_\pi$ is closed in $A(\pi)$ and hence that $A(\pi)=M_\pi$ as claimed. In particular, the fixed point algebra of $(\aA,G,\alpha)$ is given by $\aB$.

	(ii) Next we show that the C\Star dynamical system $(\aA,G,\alpha)$ is free. For this purpose, we again fix $\pi\in\widehat{G}$. Since $A(\pi)=M_\pi$ holds by part (i) and 
	\begin{align*}
	M_{-\pi}\cdot M_\pi:=\Span\{m(x,y) \;|\; x\in M_{-\pi}, y\in M_\pi\}
	\end{align*}
	is dense in $\aB$ by construction, it follows that the multiplication map on $\aA$ induces an isomorphism of $\aB-\aB$-Morita equivalence bimodules between $A(-\pi)\widehat{\otimes}_{\aB}A(\pi)$ and $\aB$. We therefore conclude from Corollary \ref{equcondsatactgrpcomm} that the C\Star dynamical system $(\aA,G,\alpha)$ is free.
\end{proof}

\begin{rmk}
	\sloppy
	To emphasize the dependence on a given factor system $(M,\Psi):=(M_\pi, \Psi_{\pi, \rho})_{\pi, \rho\in\widehat{G}}$, we occasionally write $\bigr(\alg A_{(M,\Psi)}, G, \alpha_{(M,\Psi)}\bigl)$ for the associated free C\Star dynamical system from Theorem \ref{fac sys into free}.
\end{rmk}



\section{Classification of Free Actions of Compact Abelian Groups}
\label{sec:classification}

In the previous section we have seen how a factor system gives rise to a free C\Star dynamical system and vice versa. In this section we finally establish a classification theory for free actions of compact Abelian groups. If not mentioned otherwise, $\aB$ denotes a fixed unital C\Star algebra and $G$ a fixed compact Abelian group.

\begin{defn}~\label{equclasssatact} 
	\begin{enumerate}
	\item
	Let $(\aA,G,\alpha)$ and $(\aA',G,\alpha')$ be two free C\Star dynamical systems with unital C\Star algebras $\aA$ and $\aA'$ such that $\aA^G=(\aA')^G=\aB$. We call $(\aA,G,\alpha)$ and $(\aA',G,\alpha')$ \emph{equivalent} if there is a $G$-equivariant \Star isomorphism $T:\aA\rightarrow\aA'$ satisfying $T_{\mid_{\aB}}=\id_{\aB}$. 
	\item
	 We call two factor systems $(M_\pi, \Psi_{\pi, \rho})_{\pi, \rho\in\widehat{G}}$ and $(M'_\pi, \Psi'_{\pi, \rho})_{\pi, \rho\in\widehat{G}}$ \emph{equivalent} if there is a family $(T_\pi:M_\pi\rightarrow M'_\pi)_{\pi \in \widehat{G}}$ of Morita equivalence $\aB-\aB$-bimodule isomorphisms satisfying for all $\pi,\rho\in\widehat{G}$
	\begin{align}
		\Psi'_{\pi, \rho}\circ(T_\pi\otimes_{\aB} T_\rho)=T_{\pi+\rho}\circ\Psi_{\pi, \rho}.\label{fac sys eq}
	\end{align}
	\end{enumerate}
	\end{defn}

We are now in the position to state and prove on of the main classification theorems of this article.

\begin{thm}\label{fac sys equ}
Let $G$ be a compact Abelian group and $\aB$ a unital C\Star algebra. Furthermore, let $\varphi:\widehat{G}\rightarrow\Pic(\aB)$ be a group homomorphism and $(M_\pi, \Psi_{\pi, \rho})_{\pi, \rho\in\widehat{G}}$ and $(M'_\pi, \Psi'_{\pi, \rho})_{\pi, \rho\in\widehat{G}}$ factor systems for the map $\varphi$. Then the following statements are equivalent:
	\begin{equivalence}
	\item[\emph{(a)}]	\label{en:factor_sys_equiv cmpt ab}
		The factor systems are equivalent.
	\item[\emph{(b)}]	\label{en:dyn_sys_equiv cmpt ab}
		The associated free C\Star dynamical systems are equivalent.
	\end{equivalence}
\end{thm}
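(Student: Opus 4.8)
The plan is to prove both implications by exploiting the explicit identification $A(\pi)=M_\pi$ furnished by Theorem~\ref{fac sys into free}, together with the elementary fact that a $G$-equivariant map automatically respects isotypic decompositions. Throughout I write $(\aA,G,\alpha)$ and $(\aA',G,\alpha')$ for the free C\Star dynamical systems associated to $(M_\pi,\Psi_{\pi,\rho})$ and $(M'_\pi,\Psi'_{\pi,\rho})$, and I work with the concrete models $\aA=\overline{\lambda(A)}$, $\aA'=\overline{\lambda'(A')}$ from Definition~\ref{constr II}, acting on the completions $\bar A$, $\bar A'$ of $A=\bigoplus_\pi M_\pi$ and $A'=\bigoplus_\pi M'_\pi$.

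For the implication (b)$\Rightarrow$(a) I would start from a $G$-equivariant \Star isomorphism $T:\aA\to\aA'$ with $T_{\mid\aB}=\id_\aB$. Equivariance sends the isotypic component $A(\pi)$ onto $A'(\pi)$, and by Theorem~\ref{fac sys into free} these are exactly $M_\pi$ and $M'_\pi$; thus $T$ restricts to bijections $T_\pi:=T_{\mid M_\pi}:M_\pi\to M'_\pi$. Each $T_\pi$ is a Morita equivalence $\aB-\aB$-bimodule isomorphism: bimodularity is immediate from multiplicativity of $T$ together with $T_{\mid\aB}=\id_\aB$, and preservation of both inner products follows because $T$ is a \Star homomorphism fixing $\aB$ that commutes with the conditional expectations ($P_0'\circ T=T\circ P_0$ by equivariance), using the formula $\langle x,y\rangle_\aB=P_0(m(i(y),x))$ of Corollary~\ref{inv VIII} and the analogous expression ${}_\aB\langle x,y\rangle=m(x,i(y))$ for the left inner product. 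The cocycle relation~\eqref{fac sys eq} then drops out of multiplicativity: for $x\in M_\pi$, $y\in M_\rho$ one has $T_{\pi+\rho}(\Psi_{\pi,\rho}(x\otimes_\aB y))=T(xy)=T(x)T(y)=\Psi'_{\pi,\rho}\bigl((T_\pi\otimes_\aB T_\rho)(x\otimes_\aB y)\bigr)$. This direction is essentially bookkeeping.

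For the implication (a)$\Rightarrow$(b) I would assemble from a family $(T_\pi)$ satisfying~\eqref{fac sys eq} the single graded map $T:=\bigoplus_\pi T_\pi$ on $A$. Multiplicativity of $T$ is precisely~\eqref{fac sys eq}, and since each $T_\pi$ preserves the right $\aB$-valued inner product of $M_\pi$, the map $T$ preserves the inner product of Lemma~\ref{inv I} and hence extends to a unitary $\bar T:\bar A\to\bar A'$ of right Hilbert $\aB$-modules. I would first record that the normalization forces $T_0=\id_\aB$: a Morita self-equivalence of the standard bimodule $\aB$ has the form $b\mapsto ub$ with $u\in UZ(\aB)$, and evaluating~\eqref{fac sys eq} at $\pi=\rho=0$ with $\Psi_{0,0}=\Psi'_{0,0}=\id_\aB$ yields $u=u^2$, so $u=\one$. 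This guarantees $\bar T(\one_\aB)=\one_\aB$ and, ultimately, $T_{\mid\aB}=\id_\aB$.

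The heart of this direction, and the main obstacle, is upgrading the algebraic/Hilbert-module statement to a \Star isomorphism of the C\Star completions. Here I would verify the intertwining identity $\bar T\,\lambda_a=\lambda'_{T(a)}\,\bar T$ for $a\in A$, which follows from multiplicativity by evaluating both sides on $x\in A$ (both give $m'(Ta,Tx)$). Since $\bar T$ is unitary, conjugation $S\mapsto\bar T\,S\,\bar T^{+}$ is an isometric \Star isomorphism $\mathcal L(\bar A)\to\mathcal L(\bar A')$ sending $\lambda_a$ to $\lambda'_{T(a)}$; it therefore carries $\aA=\overline{\lambda(A)}$ onto $\aA'=\overline{\lambda'(A')}$ and supplies the desired \Star isomorphism. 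Preservation of the involution is then automatic, since $\bar T$ intertwines the adjoints $\lambda_a^+$, $(\lambda'_{T(a)})^{+}$ and fixes $\one_\aB$, so $T(i(a))=\bar T(\lambda_a^+(\one_\aB))=(\lambda'_{T(a)})^{+}(\one_\aB)=i'(T(a))$; and $G$-equivariance follows from $\bar T\,U_g=U'_g\,\bar T$, which holds because $U_g$ and $U'_g$ both act as multiplication by $\pi_g$ on the $\pi$-th summand and $\bar T$ is graded. Together with $T_{\mid\aB}=\id_\aB$ this realizes the equivalence of Definition~\ref{equclasssatact}. The only genuinely analytic point in the whole argument is this C\Star extension through the unitary $\bar T$; everything else is algebra on the dense \Star subalgebra $A$.
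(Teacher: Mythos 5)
Your proposal is correct and follows essentially the same route as the paper: for (a)$\Rightarrow$(b) take the direct sum $T=\bigoplus_\pi T_\pi$, extend it to a unitary $\bar T:\bar A\rightarrow\bar A'$ of right Hilbert $\aB$-modules, and conjugate by $\bar T$ to carry $\aA=\overline{\lambda(A)}$ onto $\aA'=\overline{\lambda'(A')}$; for (b)$\Rightarrow$(a) restrict the equivariant \Star isomorphism to the isotypic components $A(\pi)=M_\pi$ and read off equation~\eqref{fac sys eq} from multiplicativity. Your only departures are cosmetic refinements of the same argument --- deriving $T_0=\id_\aB$ from the normalization via Proposition~\ref{autME} (a point the paper leaves implicit) and checking inner-product preservation through the intertwining $P_0'\circ T=T\circ P_0$ instead of the paper's direct computation with $T_\pi(x)^*=T_{-\pi}(x^*)$.
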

\begin{proof}
Suppose first that the factor systems $(M_\pi, \Psi_{\pi, \rho})_{\pi, \rho\in\widehat{G}}$ and $(M'_\pi, \Psi'_{\pi, \rho})_{\pi, \rho\in\widehat{G}}$ are equivalent and let $(T_\pi:M_\pi\rightarrow M'_\pi)_{\pi \in \widehat{G}}$ be a family of $\aB-\aB$-Morita equivalence bimodule isomorphisms such that equation (\ref{fac sys eq}) holds for all $\pi,\rho\in\widehat{G}$. Furthermore, let 
\begin{align*}
	A &:= \bigoplus_{\pi\in\widehat{G}}M_{\pi} 
	&
	&\text{and} 
	&
	A' &:= \bigoplus_{\pi\in\widehat{G}}M'_{\pi}
\end{align*}
be the corresponding $^*$-algebras with involutions given by $i$ and $i'$, respectively. Then a few moments thought shows that the direct sum of the maps $T_\pi:M_\pi\rightarrow M'_\pi$, $\pi\in\widehat{G}$, provides a $G$-equivariant $^*$-isomorphism $T:A\rightarrow A'$ of algebras. In fact, the map $T$ is clearly a $G$\ndash equivariant isomorphism of right pre-Hilbert $\aB-\aB$-bimodules by construction. Moreover, the assumption that equation (\ref{fac sys eq}) holds for all $\pi,\rho\in\widehat{G}$ implies that $T$ is multiplicative. By Theorem \ref{inv VII}, it is also $^*$-preserving, that is, $T(i(x))=i'(T(x))$ holds for all $x\in A$. Passing over to the continuous extension of $T$ provides a $G$-equivariant isomorphism $\bar T:\bar A\rightarrow \bar A'$ of right Hilbert $\aB-\aB$-bimodules and it is easily checked with the help of the previous discussion that the relation
\begin{align*}
\Ad[\bar T]\circ\lambda=\lambda'\circ T
\end{align*}
holds, where $\lambda:A \to \mathcal L(\bar A)$ and $\lambda':A' \to \mathcal L(\bar A')$ denote the faithful $^*$-representations from Proposition \ref{constr I}. In particular, we conclude that the map $\Ad[\bar T]:\mathcal L(\bar A)\rightarrow \mathcal L(\bar A')$ restricts to a $G$-equivariant $^*$-isomorphism between the associated free C\Star dynamical systems $(\aA,G,\alpha)$ and $(\aA',G,\alpha')$ which completes the first part of the proof.

Suppose, conversely, that the associated free C\Star dynamical systems $((\aA,m),G,\alpha)$ and $((\aA',m'),G,\alpha')$ are equivalent and let $T:\aA\rightarrow\aA'$ be a $G$-equivariant $^*$-isomorphism. Then it is a consequence of the $G$-equivariance of the map $T$ that the corresponding restriction maps $T_\pi:=T_{\mid M_\pi}:M_\pi\rightarrow M'_\pi$, $\pi\in\widehat{G}$, are well-defined and $\aB-\aB$ bimodule isomorphisms. Moreover, the multiplicativity of $T$ implies that equation (\ref{fac sys eq}) holds for all $\pi,\rho\in\widehat{G}$. Hence it remains to show that the family $(T_\pi:M_\pi\rightarrow M'_\pi)_{\pi\in\widehat{G}}$ preserves the $\aB$-valued inner products. To see that this is true, we first conclude from the $^*$-invariance of $T$ that $T_\pi(x)^*=T_{-\pi}(x^*)$ holds for all $\pi\in\widehat{G}$ and all $x\in M_\pi$. It follows from a short computation involving equation (\ref{fac sys eq}) that
\begin{align*}
\langle T_\pi(x),T_\pi(y)\rangle_\aB=m'(T_\pi(x),T_\pi(y)^*)=m'(T_\pi(x),T_{-\pi}(y^*))=m(x,y^*)=\langle x,y\rangle_\aB
\end{align*}
holds for all $\pi\in\widehat{G}$ and all $x,y\in M_\pi$. The corresponding computation for the left $\aB$-valued inner products can be verified in a similar way and completes the proof.
\end{proof}

\begin{defn}
We write $\Ext(\aB,G)$ for the set of equivalence classes of free actions of $G$ with fixed point algebra $\aB$. The equivalence class of a free C\Star dynamical system $(\aA, G, \alpha)$ with fixed point algebra $\aB$ is denoted by
$[(\aA, G, \alpha)]$.
\end{defn}

Recall that for a free C\Star dynamical system $(\aA, G, \alpha)$ with fixed point algebra $\aB$ we have a group homomorphism $\varphi_{\aA}:\widehat{G}\rightarrow\Pic(\aB)$ given by $\varphi_\aA(\pi):=[A(\pi)]$ (\cf Proposition \ref{prop:sat_vs_pic}). By Theorem \ref{fac sys equ}, the map $\varphi_{\aA}$ only depends on the equivalence class of $(\aA, G, \alpha)$ and hence we have an invariant
\begin{align*}
	I:\Ext(\aB,G)\rightarrow\Hom_{\text{gr}}\bigr(\widehat{G},\Pic(\aB)\bigl), \quad I\bigr([(\aA,G,\alpha)]\bigl):=\varphi_\aA.
\end{align*}
In particular, we may partition $\Ext(\aB, G)$ into the subsets
\begin{equation*}
	\Ext(\aB,G,\varphi) := I^{-1}(\varphi) = \{ [(\aA, G, \alpha)] \in \Ext(\aB, G) \;|\; \varphi_\aA = \varphi \}.
\end{equation*}
For a fixed group homomorphism $\varphi:\widehat{G}\rightarrow\Pic(\aB)$, set $\Ext(\aB, G, \varphi)$ may be empty. We postpone this problem until the end of the section and concentrate first on characterizing the set $\Ext(\aB, G, \varphi)$ and its C\Star dynamical systems.
We start with a useful statement about automorphisms of Morita equivalence bimodules. Although it might be well-known to experts, we have not found such a statement explicitly discussed in the literature.

\begin{prop}	\label{autME}
	Let $T$ be an automorphism of the Morita equivalence $\aB-\aB$-bimodule~$M$. Then there exists a unique unitary element $u$ of the center of $\aB$, \ie, an element $u\in UZ(\aB)$, such that $T(m)=u\cdot m$ for all $m\in M$. Furthermore, the map
	\begin{align*}
	\psi: UZ(\aB)\rightarrow\Aut_{\emph{ME}}(M), \quad \psi(u)(m):=u\cdot m,
	\end{align*}
	is an isomorphism of groups, where $\Aut_{\emph{ME}}(M)$ denotes the group of automorphisms of the Morita equivalence $\aB-\aB$-bimodule $M$.
\end{prop}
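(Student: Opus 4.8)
The plan is to produce the central unitary $u$ directly from the imprimitivity structure and then read off all the remaining assertions. First I would exploit fullness of the left inner product: since $\aB$ is unital and $\Span\{{}_{\aB}\langle x,y\rangle \mid x,y\in M\}$ is dense in $\aB$, this span is a dense ideal and hence all of $\aB$, so there are finitely many $x_1,\dots,x_n,y_1,\dots,y_n\in M$ with $\sum_i {}_{\aB}\langle x_i,y_i\rangle = \one_{\aB}$. I would then set
\[
  u := \sum_{i=1}^n {}_{\aB}\langle T(x_i),y_i\rangle \in \aB
\]
and check that $u\cdot m = T(m)$ for every $m\in M$. This is the heart of the argument and the step I expect to carry the most weight: using the defining relation ${}_{\aB}\langle x,y\rangle\cdot z = x\cdot\langle y,z\rangle_{\aB}$ of a Morita equivalence bimodule together with the right $\aB$-linearity of $T$, one computes $u\cdot m = \sum_i T(x_i)\cdot\langle y_i,m\rangle_{\aB} = \sum_i T(x_i\cdot\langle y_i,m\rangle_{\aB}) = T\big((\sum_i {}_{\aB}\langle x_i,y_i\rangle)\cdot m\big) = T(m)$.

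Next I would establish faithfulness of the left action, which drives both uniqueness and centrality. If $b\cdot m = 0$ for all $m$, then $b\,{}_{\aB}\langle x,y\rangle = {}_{\aB}\langle b\cdot x,y\rangle = 0$ for all $x,y$, and multiplying the relation $\one_\aB = \sum_i {}_\aB\langle x_i,y_i\rangle$ by $b$ gives $b=0$. Uniqueness of $u$ is then immediate, since $T(m)=u\cdot m=u'\cdot m$ forces $(u-u')\cdot m=0$ for all $m$. For centrality I would use that $T$ is left $\aB$-linear: for every $b\in\aB$ and $m\in M$ we have $(ub)\cdot m = u\cdot(b\cdot m) = T(b\cdot m) = b\cdot T(m) = (bu)\cdot m$, whence $ub=bu$ by faithfulness and thus $u\in Z(\aB)$.

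To see that $u$ is unitary I would invoke that $T$ preserves the right $\aB$-valued inner product. Using $\langle a\cdot x,y\rangle_\aB = \langle x,a^*\cdot y\rangle_\aB$ one gets $\langle m,m'\rangle_\aB = \langle u\cdot m,u\cdot m'\rangle_\aB = \langle m,(u^*u)\cdot m'\rangle_\aB$ for all $m,m'$, so $\langle m,(u^*u-\one_\aB)\cdot m'\rangle_\aB=0$; taking $m=(u^*u-\one_\aB)\cdot m'$ and using definiteness together with faithfulness yields $u^*u=\one_\aB$. Since $u$ is central and $Z(\aB)$ is a commutative unital C\Star algebra, $u^*u=\one_\aB$ already forces $uu^*=\one_\aB$, so $u\in UZ(\aB)$.

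Finally, for the second assertion I would check that $\psi(u)(m):=u\cdot m$ is a Morita equivalence automorphism for every $u\in UZ(\aB)$: it is a bimodule map because $u$ is central, and it preserves both inner products, e.g. ${}_\aB\langle u\cdot m,u\cdot m'\rangle = u\,{}_\aB\langle m,m'\rangle\,u^* = {}_\aB\langle m,m'\rangle$ and likewise on the right. The identity $\psi(uv)=\psi(u)\psi(v)$ is clear, injectivity follows from faithfulness, and surjectivity is exactly the first part of the statement; hence $\psi$ is an isomorphism of groups. The only genuinely delicate points throughout are the initial construction of $u$ and the faithfulness input; everything else is bookkeeping with the bimodule axioms.
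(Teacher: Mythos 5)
Your proof is correct, and it arrives at the same central unitary as the paper but by a more direct route. The paper argues in two modular steps: it first shows $\Aut_{\text{ME}}(\aB)\cong UZ(\aB)$ for the canonical bimodule $\aB$ (where an automorphism is determined by its value at $\one_\aB$), and then transports a general $T\in\Aut_{\text{ME}}(M)$ to an automorphism $T_\Psi=\Psi\circ(T\otimes\id_{\overline{M}})\circ\Psi^{-1}$ of $\aB$ using the conjugate module $\overline{M}$ and the isomorphism $\Psi:M\widehat{\otimes}_{\aB}\overline{M}\to\aB$, $\Psi(m\otimes_{\aB}\overline{m'})={}_{\aB}\langle m,m'\rangle$ from \cite[Proposition 3.28]{ReWi98}, with surjectivity coming from a partition of unity $\sum_i{}_{\aB}\langle m_i,m_i'\rangle=\one_{\aB}$. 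If you unwind the paper's composite, $T_\Psi(\one_{\aB})=\sum_i{}_{\aB}\langle T(x_i),y_i\rangle$ is exactly your element $u$, so the difference is organizational rather than substantive: you bypass the conjugate module and the external citation entirely, replacing them with three hands-on verifications from the imprimitivity axioms — the computation $u\cdot m=T(m)$, faithfulness of the left action (which cleanly drives uniqueness and centrality), and the inner-product argument giving $u^*u=\one_{\aB}$, where you correctly note that centrality upgrades this to $uu^*=\one_{\aB}$. The paper's factorization $\psi=\psi_2\circ\psi_1$ buys reusability and hides the unitarity/centrality bookkeeping in the trivial bimodule case; your version buys transparency, since the unitary is exhibited by an explicit formula and every property is checked directly. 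One point worth making explicit in your write-up: ``automorphism of a Morita equivalence bimodule'' must be understood to preserve both inner products, not merely the bimodule actions — your unitarity step uses this, and without it the claim fails (any central invertible non-unitary element yields a bimodule automorphism of $\aB$ that is not of the asserted form).
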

\begin{proof}
We divide the proof of this statement into two steps:

(i) In the first step we show that the assertion holds for the canonical Morita equivalence $\aB-\aB$-bimodule $\aB$. To see that this is true, we choose $u\in UZ(\aB)$ and note that the map $T_u:\aB\rightarrow\aB$, $b\mapsto u\cdot b$ defines an automorphism of the Morita equivalence $\aB-\aB$-bimodule $\aB$. In particular, the assignment
\[\psi_1:UZ(\aB)\rightarrow\Aut_{\text{ME}}(\aB), \quad u\mapsto T_u
\]is an isomorphism of groups. In fact, given $T\in \Aut_{\text{ME}}(\aB)$, a short calculation shows that $T$ is uniquely determined by $T(\one_{\aB})$ which is an element in $UZ(\aB)$. 

(ii) In the second step we show that Morita equivalence automorphisms of $\aB$ are in one-to-one correspondence with automorphisms of $M$. To begin with, we denote by $\overline{M}$ the conjugate module and recall that the map
\begin{align*}
\Psi:M\widehat{\otimes}_{\aB}\overline{M}\rightarrow\aB, \quad \Psi(m\otimes_{\aB}\overline{m'}):={}_{\aB}\langle m,m'\rangle
\end{align*}
for $m,m'\in M$ defines an isomorphism of Morita equivalence $\aB-\aB$-bimodules (\cf \cite[Proposition 3.28]{ReWi98}). Therefore, given an element $T\in \Aut_{\text{ME}}(M)$, it is not hard to check that the composition map $T_{\Psi}:=\Psi\circ(T\otimes_{\aB}\id_{\overline{M}})\circ\Psi^{-1}$ defines an automorphism of the Morita equivalence $\aB-\aB$-bimodule $\aB$. Next, we show that the map 
\begin{align*}
\psi_2:\Aut_{\text{ME}}(\aB)\rightarrow\Aut_{\text{ME}}(M), \quad \psi_2(T)(m):=T(\one_{\aB})\cdot m
\end{align*}
is an isomorphism of groups. In fact, we first note that $\psi_2$ is a well-defined and injective group homomorphism. Since $M$ is a full right Hilbert $\aB$-module, there is a finite set of elements $m_i, m_i'\in M$ $(1\leq i\leq n)$ such that $\sum^n_{i=1} {}_{\aB}\langle m_i,m_i'\rangle=\one_{\aB}$. The surjectivity of $\psi_2$ is then a consequence of the equation
\begin{align*}
\psi_2(T_{\Psi})(m)=T_{\Psi}(\one_{\aB})\cdot m=T(m)
\end{align*}
which holds for all $m\in M$. The assertion therefore follows from $\psi=\psi_2\circ\psi_1$.
\end{proof}

\begin{cor}\label{autMEcor}
Let $M$ be a Morita equivalence $\aB-\aB$-bimodule and $u\in UZ(\aB)$. Then there exists a unique element $\Phi_M(u)\in UZ(\aB)$ such that $\Phi_M(u) \cdot m=m \cdot u$ holds for all $m\in M$. Furthermore, the map
\begin{align*}
\Phi_M:UZ(\aB)\rightarrow UZ(\aB), \quad u\mapsto \Phi_M(u)
\end{align*}
is an automorphism of groups.
\end{cor}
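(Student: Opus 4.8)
The plan is to identify right multiplication by $u$ with an automorphism of $M$ and then to invoke Proposition~\ref{autME}. Concretely, for a fixed $u \in UZ(\aB)$ I would first consider the map $R_u \colon M \to M$, $R_u(m) := m \cdot u$, and check that it is an automorphism of the Morita equivalence $\aB-\aB$-bimodule $M$. Left-linearity is clear, while right-linearity uses that $u$ is central, so that $m \cdot b \cdot u = m \cdot u \cdot b$. For the two inner products one computes, using $\langle x \cdot b, y\rangle_\aB = b^* \langle x,y\rangle_\aB$ and ${}_\aB\langle x \cdot b, y\rangle = {}_\aB\langle x, y \cdot b^*\rangle$, that $\langle m_1 \cdot u, m_2 \cdot u\rangle_\aB = u^* \langle m_1, m_2\rangle_\aB u = \langle m_1, m_2\rangle_\aB$ and ${}_\aB\langle m_1 \cdot u, m_2 \cdot u\rangle = {}_\aB\langle m_1, m_2\rangle$, the last equalities relying on the centrality and unitarity of $u$. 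As $R_{u^*}$ is a two-sided inverse, $R_u \in \Aut_{\text{ME}}(M)$.

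Next I would apply Proposition~\ref{autME} to $R_u$ to obtain a unique $\Phi_M(u) \in UZ(\aB)$ with $m \cdot u = \Phi_M(u) \cdot m$ for all $m \in M$; this settles the existence and uniqueness claim. The homomorphism property then follows from a one-line computation: for $u, v \in UZ(\aB)$ one has $\Phi_M(uv) \cdot m = m \cdot (uv) = (m \cdot u)\cdot v = \Phi_M(u) \cdot (m \cdot v) = \Phi_M(u)\Phi_M(v) \cdot m$, and uniqueness forces $\Phi_M(uv) = \Phi_M(u)\Phi_M(v)$.

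It remains to show that $\Phi_M$ is bijective, which is where the only genuine subtlety lies. Injectivity is immediate: if $\Phi_M(u) = \one_\aB$ then $m \cdot u = m$ for all $m$, and fullness of the right inner product (which yields $\one_\aB = \sum_i \langle m_i, m_i'\rangle_\aB$ for finitely many elements) forces $u = \one_\aB$. For surjectivity I would exploit the left--right symmetry of Proposition~\ref{autME}: its proof, run with the roles of the left and right Hilbert module structures interchanged (using the conjugate module and the isomorphism $\overline{M}\widehat{\otimes}_\aB M \simeq \aB$), shows equally that every element of $\Aut_{\text{ME}}(M)$ is right multiplication by a unique central unitary, i.e.\ that $u \mapsto R_u$ is an isomorphism $UZ(\aB) \to \Aut_{\text{ME}}(M)$. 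Since $R_u$ equals left multiplication by $\Phi_M(u)$ by construction, the map $\Phi_M$ is the composite of this isomorphism with the inverse of the isomorphism $\psi$ from Proposition~\ref{autME}, hence itself a group automorphism. The main obstacle is thus precisely this surjectivity step; everything else reduces to the bimodule axioms and a direct appeal to Proposition~\ref{autME}.
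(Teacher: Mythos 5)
Your proposal is correct and takes essentially the same route as the paper: the existence and uniqueness of $\Phi_M(u)$ is obtained exactly as in the paper's proof, namely by applying Proposition~\ref{autME} to the Morita equivalence automorphism $m \mapsto m\cdot u$ of $M$ (whose automorphism property you verify using centrality and unitarity of $u$). The paper leaves the group-automorphism claim as ``a short calculation''; your fleshed-out version --- the homomorphism property via uniqueness, injectivity via fullness of the inner product, and surjectivity via the left--right mirrored form of Proposition~\ref{autME} --- is a legitimate and complete way of supplying it.
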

\begin{proof}
The first assertion is an immediate consequence of Proposition \ref{autME} applied to the automorphism of $M$ defined by $m\mapsto m \cdot u$. That the map $\Phi_M$ is an automorphism of groups follows from a short calculation.
\end{proof}

\begin{prop}\label{module structure pic}
The map 
\begin{align*}
\Phi:\Pic(\aB)\rightarrow\Aut(UZ(\aB)), \quad [M]\mapsto\Phi_M
\end{align*}
is a group homomorphism.
\end{prop}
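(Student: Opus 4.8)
The plan is to verify the two defining properties of a group homomorphism: that $\Phi$ is well defined, i.e.\ that $\Phi_M$ depends only on the class $[M]\in\Pic(\aB)$, and that it is multiplicative, $\Phi_{[M][N]} = \Phi_{[M]} \circ \Phi_{[N]}$, where the product on $\Pic(\aB)$ is the internal tensor product (Definition~\ref{picgrpalg}) and the product on $\Aut(UZ(\aB))$ is composition. Throughout, the decisive tool will be the uniqueness clause of Corollary~\ref{autMEcor}: for a Morita equivalence $\aB-\aB$-bimodule $M$ and $u \in UZ(\aB)$, the element $\Phi_M(u)$ is the \emph{unique} central unitary with $\Phi_M(u) \cdot m = m \cdot u$ for all $m \in M$. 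In every step I would produce a candidate central unitary satisfying this relation and then invoke uniqueness to identify it.

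For well-definedness I would start from an isomorphism $S \colon M \to M'$ of Morita equivalence $\aB-\aB$-bimodules. Since $S$ is simultaneously left and right $\aB$-linear, for every $u \in UZ(\aB)$ and $m \in M$ it intertwines the two module actions:
\[
	\Phi_M(u) \cdot S(m) = S\bigl( \Phi_M(u) \cdot m \bigr) = S(m \cdot u) = S(m) \cdot u.
\]
As $S$ is surjective this reads $\Phi_M(u) \cdot m' = m' \cdot u$ for all $m' \in M'$, so uniqueness in Corollary~\ref{autMEcor} forces $\Phi_{M'}(u) = \Phi_M(u)$. Hence $\Phi_M = \Phi_{M'}$ and $\Phi$ descends to $\Pic(\aB)$.

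For multiplicativity the key step is to check the defining relation for $M \widehat{\otimes}_{\aB} N$ on elementary tensors. For $m \in M$, $n \in N$ and $u \in UZ(\aB)$, using in turn the right action on the tensor product, Corollary~\ref{autMEcor} for $N$, the $\aB$-balancing of $\otimes_{\aB}$, and Corollary~\ref{autMEcor} for $M$ applied to the central unitary $\Phi_N(u)$, I obtain
\[
	(m \otimes_{\aB} n) \cdot u
	= m \otimes_{\aB} (\Phi_N(u) \cdot n)
	= (m \cdot \Phi_N(u)) \otimes_{\aB} n
	= \bigl( \Phi_M(\Phi_N(u)) \cdot m \bigr) \otimes_{\aB} n
	= \Phi_M(\Phi_N(u)) \cdot (m \otimes_{\aB} n).
\]
Since $\Phi_M(\Phi_N(u)) \in UZ(\aB)$, uniqueness then gives $\Phi_{M \widehat{\otimes}_{\aB} N}(u) = \Phi_M(\Phi_N(u))$, that is $\Phi_{[M][N]} = \Phi_{[M]} \circ \Phi_{[N]}$. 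The neutral element needs no separate argument: for the canonical bimodule $\aB$, evaluating $\Phi_{\aB}(u)\cdot b = b\cdot u$ at $b = \one_{\aB}$ gives $\Phi_{\aB}(u)=u$, so $\Phi_{\aB} = \id_{UZ(\aB)}$.

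The one point requiring care — and the main (mild) obstacle — is the passage from elementary tensors to the whole completed bimodule, since the displayed relation is a priori only verified on the algebraic span of the $m \otimes_{\aB} n$. To close this gap I would observe that both left multiplication by the fixed central unitary $\Phi_M(\Phi_N(u))$ and right multiplication by $u$ are bounded (indeed isometric) operators on $M \widehat{\otimes}_{\aB} N$, and that this algebraic span is dense. Therefore the identity $\Phi_M(\Phi_N(u)) \cdot \xi = \xi \cdot u$ extends by continuity to all $\xi \in M \widehat{\otimes}_{\aB} N$, which is precisely the hypothesis needed before applying the uniqueness statement of Corollary~\ref{autMEcor}.
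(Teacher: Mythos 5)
Your proposal is correct and follows essentially the same route as the paper: well-definedness via an intertwining bimodule isomorphism, and multiplicativity via the same chain of identities on elementary tensors $(m \otimes_{\aB} n) \cdot u = \Phi_M(\Phi_N(u)) \cdot (m \otimes_{\aB} n)$, both concluded with the uniqueness clause of Corollary~\ref{autMEcor}. Your explicit continuity argument extending the relation from the algebraic span to the completed bimodule $M \widehat{\otimes}_{\aB} N$ is a detail the paper leaves implicit, and is a welcome addition rather than a deviation.
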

\begin{proof}
(i) We first show that $\Phi$ is well-defined. Therefore let $\Psi:M\rightarrow N$ be an isomorphism of Morita equivalence $\aB-\aB$-bimodules and $u\in UZ(\aB)$. Then 
\begin{align*}
\Phi_M(u) \cdot \Psi(m)=\Psi(\Phi_M(u) \cdot m)=\Psi(m \cdot u)=\Psi(m) \cdot u=\Phi_N(u) \cdot \Psi(m)
\end{align*}
holds for all $m\in M$ which implies that $\Phi_M=\Phi_N$.

(ii) To see that $\Phi$ is a group homomorphism, let $M$ and $N$ be Morita equivalence $\aB-\aB$-bimodules and $u\in UZ(\aB)$. Then 
\begin{align*}
&\Phi_{M\widehat{\otimes}_{\aB}N}(u) \cdot (m\otimes_{\aB} n)=(m\otimes_{\aB} n) \cdot u=m\otimes_{\aB}(n \cdot u)
=m\otimes_{\aB}(\Phi_N(u) \cdot n)\\
&=(m \cdot \Phi_N(u))\otimes_{\aB} n=(\Phi_M(\Phi_N(u)) \cdot m)\otimes_{\aB} n=(\Phi_M\circ\Phi_N)(u) \cdot (m\otimes_{\aB} n).
\end{align*}
holds for all $m\in M$ and $n\in N$ which shows that $\Phi_{M\widehat{\otimes}_{\aB}N}=\Phi_M\circ\Phi_N$.
\end{proof}

\begin{rmk}\label{G module structure}
We point out that the map $\Phi$ from Proposition \ref{module structure pic} induces a map
\begin{align*}
\Phi_*:\Hom_{\text{gr}}(\widehat{G},\Pic(\aB))\rightarrow\Hom_{\text{gr}}\bigr(\widehat{G},\Aut(UZ(\aB)\bigl), \quad \Phi_*(\varphi):=\Phi\circ\varphi.
\end{align*}
In particular, each $\varphi\in\Hom_{\text{gr}}(\widehat{G},\Pic(\aB))$ determines a $\widehat{G}$-module structure on $UZ(\aB)$ which enables us to make use of classical group cohomology. In fact, given an element $\varphi\in\Hom_{\text{gr}}(\widehat{G},\Pic(\aB))$, the cohomology groups 
\begin{align*}
H^n_{\varphi}\bigr(\widehat{G},UZ(\aB)\bigl):=H^n_{\Phi\circ\varphi}\bigr(\widehat{G},UZ(\aB)\bigl)
\end{align*}
are at our disposal (\cf \cite[Chapter IV]{Ma95}).
\end{rmk}

\begin{thm}\label{sat and cocycles}
Let $G$ be a compact Abelian group and $\aB$ a unital C\Star algebra. Furthermore, let $\varphi:\widehat{G}\rightarrow\Pic(\aB)$ be a group homomorphism with $\Ext(\aB,G,\varphi)\neq\emptyset$ and choose for all $\pi\in\widehat{G}$ a Morita equivalence $\aB-\aB$-bimodule $M_{\pi}\in\varphi(\pi)$ such that $M_0=\aB$. Then the following assertions hold:
	\begin{itemize}
	\item[\emph{(a)}]
		Each class in $\Ext(\aB,G,\varphi)$ can be represented by a free C\Star dynamical system of the form $\bigr(\alg A_{(M,\Psi)}, G, \alpha_{(M,\Psi)}\bigl)$.

	\item[\emph{(b)}]
		Any other free C\Star dynamical system $(\alg A_{(M,\Psi')}, G, \alpha_{(M,\Psi')})$ representing an element of $\Ext(\aB,G,\varphi)$ satisfies $\Psi'=\omega\Psi$ with $(\omega\Psi)_{\pi, \rho}:=\omega(\pi,\rho)\Psi_{\pi, \rho}$ for all $\pi,\rho\in\widehat{G}$ for some 2-cocycle
		\begin{align*}
		\omega\in Z^2_{\varphi}(\widehat{G},UZ(\aB)).
		\end{align*}

\item[\emph{(c)}]
The free C\Star dynamical systems $\bigr(\alg A_{(M,\Psi)}, G, \alpha_{(M,\Psi)}\bigl)$ and $\bigr(\alg A_{(M,\omega\Psi)}, G, \alpha_{(M,\omega\Psi)}\bigl)$ are equivalent if and only if
\begin{align*}
\omega\in B^2_{\varphi}(\widehat{G},UZ(\aB)).
\end{align*}
\end{itemize}
\end{thm}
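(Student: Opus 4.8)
The plan is to reduce all three assertions to the correspondence between factor systems and free C\Star dynamical systems (Theorem~\ref{fac sys into free} and Theorem~\ref{fac sys equ}) together with Proposition~\ref{autME}, which identifies automorphisms of a Morita equivalence bimodule with multiplication by central unitaries.

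\emph{Part (a).} Given a class in $\Ext(\aB,G,\varphi)$ represented by a free system $(\aA,G,\alpha)$, Remark~\ref{rmk nice eq}(3) produces a factor system $(A(\pi),m_{\pi,\rho})$ for $\varphi_\aA=\varphi$. Since $[A(\pi)]=\varphi(\pi)=[M_\pi]$, I would choose Morita equivalence isomorphisms $S_\pi:A(\pi)\to M_\pi$ with $S_0=\id_{\aB}$ and transport the multiplication by setting $\Psi_{\pi,\rho}:=S_{\pi+\rho}\circ m_{\pi,\rho}\circ(S_\pi^{-1}\otimes_{\aB} S_\rho^{-1})$. The associativity \eqref{eq:assoc} and the normalization pass along these isomorphisms, so $(M_\pi,\Psi_{\pi,\rho})$ is a factor system equivalent to $(A(\pi),m_{\pi,\rho})$; by Theorem~\ref{fac sys equ} its associated system $(\aA_{(M,\Psi)},G,\alpha_{(M,\Psi)})$ represents the same class.

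\emph{Part (b).} For two factor systems built on the \emph{same} bimodules $M_\pi$, each $\Psi'_{\pi,\rho}\circ\Psi_{\pi,\rho}^{-1}$ is an automorphism of the Morita equivalence bimodule $M_{\pi+\rho}$, so Proposition~\ref{autME} yields unique central unitaries $\omega(\pi,\rho)\in UZ(\aB)$ with $\Psi'_{\pi,\rho}=\omega(\pi,\rho)\Psi_{\pi,\rho}$, and normalization forces $\omega(\pi,0)=\omega(0,\pi)=1$. To see that $\omega$ is a cocycle I would substitute this into the associativity relation \eqref{eq:assoc} for $\Psi'$ and compare with the one for $\Psi$. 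The factor $\omega(\pi,\rho)$ pulls straight out of the left leg, but $\omega(\rho,\sigma)$, sitting on the second tensor leg, must first be moved across $\widehat{\otimes}_{\aB}$; the balancing relation together with Corollary~\ref{autMEcor} converts it into $\Phi_{M_\pi}(\omega(\rho,\sigma))$, \ie $\pi\cdot\omega(\rho,\sigma)$ for the $\widehat{G}$-module structure of Remark~\ref{G module structure}. Cancelling the common $\Psi$-part then leaves exactly
\[
\omega(\pi,\rho)\,\omega(\pi+\rho,\sigma)=\bigl(\pi\cdot\omega(\rho,\sigma)\bigr)\,\omega(\pi,\rho+\sigma),
\]
which is the $2$-cocycle identity, so $\omega\in Z^2_{\varphi}(\widehat{G},UZ(\aB))$.

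\emph{Part (c).} By Theorem~\ref{fac sys equ} the two systems are equivalent if and only if the factor systems $(M_\pi,\Psi)$ and $(M_\pi,\omega\Psi)$ are, \ie if and only if there is a family of bimodule automorphisms $T_\pi$ satisfying \eqref{fac sys eq}. Proposition~\ref{autME} writes $T_\pi$ as multiplication by a central unitary $t_\pi$ (with $t_0=1$ forced by normalization). Substituting into \eqref{fac sys eq} and again moving $t_\rho$ across the tensor product via Corollary~\ref{autMEcor}, the equation collapses, after cancelling $\Psi_{\pi,\rho}$, to $\omega(\pi,\rho)\,t_\pi\,(\pi\cdot t_\rho)=t_{\pi+\rho}$, that is $\omega(\pi,\rho)=t_{\pi+\rho}\,t_\pi^{-1}\,(\pi\cdot t_\rho)^{-1}$. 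This is precisely the statement that $\omega$ is the coboundary of the $1$-cochain $\pi\mapsto t_\pi^{-1}$, so such a family exists if and only if $\omega\in B^2_{\varphi}(\widehat{G},UZ(\aB))$. The routine parts are the transport in (a) and the normalization bookkeeping; the genuinely delicate step, shared by (b) and (c), is tracking how central unitaries commute past the balanced tensor product $\widehat{\otimes}_{\aB}$, since moving a factor from the second leg to the front is not free but picks up the automorphism $\Phi_{M_\pi}$ of Corollary~\ref{autMEcor}. This is exactly the mechanism producing the nontrivial $\widehat{G}$-action, turning a would-be symmetric relation into the genuine twisted $2$-cocycle and $2$-coboundary conditions, and getting this twist right is where the argument must be careful.
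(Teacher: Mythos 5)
Your proposal is correct and follows essentially the same route as the paper's own proof: transporting the multiplication maps along bimodule isomorphisms and invoking Theorem~\ref{fac sys equ} for (a), and using Proposition~\ref{autME} together with Corollary~\ref{autMEcor} and Proposition~\ref{module structure pic} to pull central unitaries through the balanced tensor product, yielding the twisted $2$-cocycle identity in (b) and the coboundary condition in (c). Your explicit formula $\omega(\pi,\rho)=t_{\pi+\rho}\,t_\pi^{-1}\,(\pi\cdot t_\rho)^{-1}$ merely spells out what the paper dismisses as ``a few moments thought.''
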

\begin{proof}
(a) Let $(\aA,G,\alpha)$ be a free C\Star dynamical system representing an element in $\Ext(\aB,G,\varphi)$ and recall that $(\aA,G,\alpha)$ gives rise to a factor system for the map $\varphi$ of the form $(A(\pi),m_{\pi, \rho})_{\pi, \rho\in\widehat{G}}$ (\cf Remark \ref{rmk nice eq}). Then the assumption implies that there is a family $(T_\pi:M_\pi\rightarrow A(\pi))_{\pi \in \widehat{G}}$ of Morita equivalence $\aB-\aB$-bimodule isomorphisms which can be used to define another family $(\Psi''_{\pi, \rho}:M_\pi\widehat{\otimes}_{\aB} M_\rho\rightarrow M_{\pi+\rho})_{\pi, \rho\in\widehat{G}}$ of Morita equivalence $\aB-\aB$-bimodule isomorphisms by
\begin{align*}
\Psi''_{\pi, \rho}:=T^+_{\pi+\rho}\circ m_{\pi, \rho}\circ(T_\pi\otimes_{\aB}T_\rho).
\end{align*}
In particular, it is not hard to see that the later family gives rise to a factor system $(M_\pi, \Psi''_{\pi, \rho})_{\pi, \rho\in\widehat{G}}$ for the map $\varphi$ which is equivalent to $(A(\pi),m_{\pi, \rho})_{\pi, \rho\in\widehat{G}}$. Therefore, the assertion is finally a consequence of Theorem \ref{fac sys equ}.

(b) Let $\bigr(\alg A_{(M,\Psi')}, G, \alpha_{(M,\Psi')}\bigl)$ be any other free C\Star dynamical system representing an element of $\Ext(\aB,G,\varphi)$ and choose $\pi,\rho\in\widehat{G}$. Then Proposition \ref{autME} implies that the automorphism $\Psi'_{\pi, \rho}\circ\Psi^{-1}_{\pi, \rho}$ of the Morita equivalence $\aB-\aB$-bimodule $M_{\pi+\rho}$ provides a unique element $\omega(\pi,\rho)\in UZ(\aB)$ satisfying
\begin{align*}
\Psi'_{\pi, \rho}=\omega(\pi,\rho)\Psi_{\pi, \rho}.
\end{align*}
Moreover, it is easily seen that the corresponding map $\omega:\widehat{G}\times\widehat{G}\rightarrow UZ(\aB)$ is a normalized 2-cochain. To see that $\omega$ actually defines a 2-cocycle, \ie, an element in $Z^2_{\varphi}(\widehat{G},UZ(\aB))$, we repeatedly use the factor system condition equation (\ref{eq:assoc}) and Proposition \ref{module structure pic}. For example, we find that
\begin{align*}
\Psi'_{\pi, \rho+\sigma}\circ(\id_\pi\otimes_{\aB}\Psi'_{\rho,\sigma})&=\Psi'_{\pi, \rho+\sigma}\circ(\id_\pi\otimes_{\aB}\omega(\rho,\sigma)\Psi_{\rho,\sigma})
\\
&=\Psi'_{\pi, \rho+\sigma}\circ(\id_\pi\omega(\rho,\sigma)\otimes_{\aB}\Psi_{\rho,\sigma})
\\
&=\Psi'_{\pi, \rho+\sigma}\circ(\Phi_\pi(\omega(\rho,\sigma))\id_\pi\otimes_{\aB}\Psi_{\rho,\sigma})
\\
&=\Phi_\pi(\omega(\rho,\sigma))\Psi'_{\pi, \rho+\sigma}\circ(\id_\pi\otimes_{\aB}\Psi_{\rho,\sigma})
\\
&=\Phi_\pi(\omega(\rho,\sigma))\omega(\pi,\rho+\sigma)\Psi_{\pi, \rho+\sigma}\circ(\id_\pi\otimes_{\aB}\Psi_{\rho,\sigma})
\end{align*}
holds for all $\pi,\rho,\sigma\in\widehat{G}$, where $\id_\pi\omega(\rho,\sigma)=\Phi_\pi(\omega(\rho,\sigma))\id_\pi$ is understood in the sense of Corollary \ref{autMEcor}.

(c) If $\omega=d_{\varphi}h$ holds for some element $h\in C^1(\widehat{G},UZ(\aB))$, then the factor systems $(M_\pi, \Psi_{\pi, \rho})_{\pi, \rho\in\widehat{G}}$ and $(M_\pi,\omega(\pi,\rho)\Psi_{\pi, \rho})_{\pi, \rho\in\widehat{G}}$ are equivalent. Hence, the assertion follows from Theorem \ref{fac sys equ}. If, on the other hand, $(M_\pi, \Psi_{\pi, \rho})_{\pi, \rho\in\widehat{G}}$ and $(M_\pi,\omega(\pi,\rho)\Psi_{\pi, \rho})_{\pi, \rho\in\widehat{G}}$ are equivalent, then we conclude  from Proposition \ref{autME} that there exists an element $h\in C^1(\widehat{G},UZ(\aB))$ which implements the equivalence given by a family $(T_\pi)_{\pi \in \widehat{G}}$ of Morita equivalence $\aB-\aB$-bimodule isomorphisms $T_\pi:M_\pi\rightarrow M_\pi$, \ie, we have $T_\pi=T_{h(\pi)}$ for all $\pi\in\widehat{G}$. Moreover, a few moments thought shows that $\omega=d_{\varphi}h\in B^2_{\varphi}(\widehat{G},UZ(\aB))$.
\end{proof}

\begin{cor}\label{sat and cocycles cor}
    Let $G$ be a compact Abelian group and $\aB$ a unital C\Star algebra. Furthermore, let $\varphi:\widehat{G}\rightarrow\Pic(\aB)$ be a group homomorphism with $\Ext(\aB,G,\varphi)\neq\emptyset$. Then the map
	\begin{align*}
	H^2_{\varphi}(\widehat{G},UZ(\aB))\times\Ext(\aB,G,\varphi)&\rightarrow\Ext(\aB,G,\varphi)
	\\
	\Bigr(\bigr[\omega\bigl],\bigr[\bigr(\alg A_{(M,\Psi)}, G, \alpha_{(M,\Psi)}\bigl)\bigl]\Bigl)&\mapsto \bigr[\bigr(\alg A_{(M,\omega\Psi)}, G, \alpha_{(M,\omega\Psi)}\bigl)\bigl]
	\end{align*}
	is a well-defined simply transitive action.
\end{cor}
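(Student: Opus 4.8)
The plan is to deduce everything from Theorem~\ref{sat and cocycles}, so throughout I fix, as in its hypotheses, a family of Morita equivalence $\aB-\aB$-bimodules $M=(M_\pi)_{\pi\in\widehat{G}}$ with $M_\pi\in\varphi(\pi)$ and $M_0=\aB$. By part~(a) every class in $\Ext(\aB,G,\varphi)$ is represented by some $(\alg A_{(M,\Psi)},G,\alpha_{(M,\Psi)})$ built from a factor system $(M,\Psi)$ with this fixed $M$, and by part~(b) any two factor systems on $M$ differ by multiplication with a $2$-cocycle. I would first record that for $\omega\in Z^2_{\varphi}(\widehat{G},UZ(\aB))$ the twisted family $\omega\Psi$ is again a factor system: the associativity identity~\eqref{eq:assoc} for $\omega\Psi$ is equivalent to the cocycle identity for $\omega$ together with~\eqref{eq:assoc} for $\Psi$, which is precisely the computation carried out (in the converse direction) in the proof of Theorem~\ref{sat and cocycles}(b). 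Hence the target $(\alg A_{(M,\omega\Psi)},G,\alpha_{(M,\omega\Psi)})$ of the prospective action is meaningful.

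Next I would establish well-definedness, which requires descending to quotients on both factors. For independence of the cocycle representative, suppose $\omega'=\omega\cdot d_{\varphi}h$ with $h\in C^1(\widehat{G},UZ(\aB))$; then $\omega'\Psi=(d_{\varphi}h)(\omega\Psi)$, so by the argument of Theorem~\ref{sat and cocycles}(c), applied with $\omega\Psi$ in place of the base factor system and with the coboundary $d_{\varphi}h$, the systems $(\alg A_{(M,\omega\Psi)},\dots)$ and $(\alg A_{(M,\omega'\Psi)},\dots)$ are equivalent. For independence of the dynamical-system representative, if $(\alg A_{(M,\tilde\Psi)},\dots)$ represents the same class as $(\alg A_{(M,\Psi)},\dots)$, then parts~(b) and~(c) together give $\tilde\Psi=\eta\Psi$ with $\eta\in B^2_{\varphi}(\widehat{G},UZ(\aB))$; consequently $\omega\tilde\Psi=(\omega\eta)\Psi$ with $[\omega\eta]=[\omega]$, and the previous case shows that the output is unchanged.

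I would then verify the two action axioms. The trivial class acts trivially since $1\cdot\Psi=\Psi$ yields the same system. For compatibility, since $UZ(\aB)$ is Abelian the group law on $H^2_{\varphi}$ is the pointwise product, and $[\omega_1]\cdot([\omega_2]\cdot[(\alg A_{(M,\Psi)},\dots)])=[(\alg A_{(M,\omega_1\omega_2\Psi)},\dots)]=([\omega_1][\omega_2])\cdot[(\alg A_{(M,\Psi)},\dots)]$, where the middle step uses that $\omega_2\Psi$ is itself a factor system on $M$, so the action of $[\omega_1]$ can be read off directly on this representative. Simple transitivity then splits into transitivity and freeness. Given two classes, represent both by factor systems on $M$, say $\Psi$ and $\Psi'$; by part~(b) we have $\Psi'=\omega\Psi$ for some cocycle $\omega$, so $[\omega]$ carries the first class to the second, giving transitivity. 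Freeness is exactly part~(c): if $[\omega]$ fixes $[(\alg A_{(M,\Psi)},\dots)]$, then $(\alg A_{(M,\omega\Psi)},\dots)$ and $(\alg A_{(M,\Psi)},\dots)$ are equivalent, whence $\omega\in B^2_{\varphi}(\widehat{G},UZ(\aB))$ and $[\omega]=[1]$.

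Since Theorem~\ref{sat and cocycles} does all the analytic and algebraic heavy lifting, the only genuine work here is organizational. I expect the main obstacle to be the bookkeeping in the well-definedness step, where one must prevent the two independent sources of ambiguity -- the cohomology class of $\omega$ and the equivalence class of the system -- from interfering, and must invoke part~(c) with the correct choice of base factor system in each case.
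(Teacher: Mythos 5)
Your proof is correct and follows essentially the same route as the paper, which states the corollary as a direct consequence of Theorem~\ref{sat and cocycles}: parts (a) and (b) give well-definedness and transitivity, part (c) gives freeness, and the cocycle computation from the proof of (b) shows $\omega\Psi$ is again a factor system. Your explicit bookkeeping of the two quotients (cohomology class of $\omega$ and equivalence class of the system) is exactly the organizational work the paper leaves implicit, and it is carried out correctly.
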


We conclude with a remark which shows that our constructions from Section \ref{prelud} are, up to isomorphisms, inverse to each other.

\begin{rmk}
	We first recall that each free C\Star dynamical system $(\aA,G,\alpha)$ with unital C\Star algebra $\aA$ and compact Abelian group $G$ gives rise to a factor system for the map $\varphi_{\aA}$ of the form $(A(\pi),m_{\pi, \rho})_{\pi, \rho\in\widehat{G}}$ (\cf Remark \ref{rmk nice eq}). Furthermore, it follows from Theorem \ref{fac sys equ} that the free C\Star dynamical system associated to this factor system is equivalent to $(\aA,G,\alpha)$. On the other hand, given a group homomorphism $\varphi:\widehat{G}\rightarrow\Pic(\aB)$ and a factor system $(M_\pi, \Psi_{\pi, \rho})_{\pi, \rho \in \widehat{G}}$ for the map $\varphi$, it is easily seen that the associated free C\Star dynamical system $\bigr(\alg A_{(M,\Psi)}, G, \alpha_{(M,\Psi)}\bigl)$ in Theorem~\ref{fac sys into free} recovers the original factor system $(M_\pi, \Psi_{\pi, \rho})_{\pi, \rho\in\widehat{G}}$. Indeed, Theorem \ref{fac sys into free} shows that $\alg A_{(M,\Psi)}(\pi)=M_\pi$ holds for all $\pi\in\widehat{G}$. Moreover, the multiplication map of $\alg A_{(M,\Psi)}$ is by construction uniquely determined by the factor system, \ie, we have $m_{\pi,\rho}=\Psi_{\pi, \rho}$ for all $\pi,\rho\in\widehat{G}$. We therefore conclude that our constructions, \ie, the procedure of associating a free C\Star dynamical system to a factor system and vice versa, are, up to isomorphisms, inverse to each other:
	\begin{align*}
		(\aA,G,\alpha) 
		&\quad \xmapsto{\quad \text{F.S.} \quad} \quad 
		(A(\pi),m_{\pi,\rho})_{\pi, \rho\in\widehat{G}}.
		\\
		(M_\pi, \Psi_{\pi, \rho})_{\pi, \rho\in\widehat{G}}
		&\quad \xmapsto{\quad \text{C$^*$} \quad} \quad
		\bigr(\alg A_{(M,\Psi)}, G, \alpha_{(M,\Psi)}\bigl)
	\end{align*}
\end{rmk}

\subsection*{Non-emptiness of $\Ext(\aB,G,\varphi)$}
\label{non-emptiness}

As we have already discussed before, each group homomorphism $\varphi:\widehat{G}\rightarrow\Pic(\aB)$ gives rise to both a family $(M_{\pi})_{\pi\in\widehat{G}}$ of Morita equivalence $\aB-\aB$-bimodules and a \mbox{family} 
\begin{align*}
(\Psi_{\pi,\rho}:M_{\pi}\widehat{\otimes}_{\aB}M_{\rho}\rightarrow M_{\pi+\rho})_{\pi,\rho\in\widehat{G}}
\end{align*}
of Morita equivalence $\aB-\aB$-bimodules isomorphisms. Given a group homomorphism $\varphi:\widehat{G}\rightarrow\Pic(\aB)$ and such a
family $(M_{\pi},\Psi_{\pi,\rho})_{\pi,\rho\in\widehat{G}}$ satisfying $M_0=\aB$, $\Psi_{0,0}=\id_{\aB}$ and $\Psi_{\pi,0}=\Psi_{0,\pi}=\id_\pi$ for all $\pi\in\widehat{G}$ (which need not be a factor system), we can examine for all $\pi,\rho,\sigma\in\widehat{G}$ the automorphism
\begin{align*}
d_M\Psi(\pi,\rho,\sigma):=\Psi_{\pi + \rho, \sigma}\circ(\Psi_{\pi, \rho} \tensor_{\aB} \id_\sigma)\circ(\id_\pi \tensor_{\aB} \Psi^+_{\rho, \sigma})\circ\Psi^+_{\pi, \rho +\sigma}
\end{align*}
of the Morita equivalence $\aB-\aB$-bimodule $M_{\pi+\rho+\sigma}$. The family of all such maps $(d_M\Psi(\pi,\rho,\sigma))_{\pi,\rho,\sigma\in\widehat{G}}$ can be interpreted as an obstruction to the associativity of the multiplication (\cf Proposition \ref{thm:factor_sys cptab}). On the other hand, it follows from the construction and from Proposition \ref{autME} that the map $d_M\Psi$ can also be considered as a normalized $UZ(\aB)$-valued 3-cochain on $\widehat{G}$, \ie, as an element in $C^3(\widehat{G},UZ(\aB))$. In fact, even more is true:

\begin{lemma}
The map $d_M\Psi$ defines an element in $Z^3_{\varphi}(\widehat{G},UZ(\aB))$. 
\end{lemma}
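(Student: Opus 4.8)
The plan is to verify directly that the normalized $3$-cochain $d_M\Psi \in C^3(\widehat{G}, UZ(\aB))$ — already identified as such before the lemma via Proposition~\ref{autME} — is annihilated by the twisted differential $d_\varphi$. Recall that $UZ(\aB)$ carries the $\widehat{G}$-module structure induced by $\Phi \circ \varphi$ (Remark~\ref{G module structure}), so that $\pi \in \widehat{G}$ acts by $\Phi_\pi := \Phi_{M_\pi}$. In multiplicative notation the cocycle identity to be established reads
\begin{equation*}
	\Phi_\pi\bigl( d_M\Psi(\rho,\sigma,\tau) \bigr) \cdot d_M\Psi(\pi+\rho,\sigma,\tau)^{-1} \cdot d_M\Psi(\pi,\rho+\sigma,\tau) \cdot d_M\Psi(\pi,\rho,\sigma+\tau)^{-1} \cdot d_M\Psi(\pi,\rho,\sigma) = \one_\aB
\end{equation*}
for all $\pi,\rho,\sigma,\tau \in \widehat{G}$.

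First I would fix $\pi,\rho,\sigma,\tau$ and work inside the fourfold internal tensor product $M_\pi \widehat{\otimes}_{\aB} M_\rho \widehat{\otimes}_{\aB} M_\sigma \widehat{\otimes}_{\aB} M_\tau$, treating its various bracketings as identified by the canonical associativity isomorphisms of $\widehat{\otimes}_{\aB}$, which are coherent so that all such identifications are unambiguous. For each of the five binary bracketings of the four factors there is a collapse isomorphism onto $M_{\pi+\rho+\sigma+\tau}$ obtained by applying the appropriate maps $\Psi$; call these $\Xi_1,\dots,\Xi_5$, indexed in the cyclic order in which the five bracketings sit around the associahedron, i.e.\ the Mac Lane pentagon. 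By its very definition, each factor of the displayed product (including the exponents $\pm1$ dictated by $d_\varphi$) equals, after conjugation into an automorphism of $M_{\pi+\rho+\sigma+\tau}$, a ratio $\Xi_i \circ \Xi_{i+1}^{-1}$ of two \emph{consecutive} collapse maps: the single elementary reassociation underlying each $d_M\Psi(\cdots)$ is precisely the one edge of the pentagon joining the corresponding bracketings. Consequently the left-hand side is a cyclic product $\Xi_1\Xi_2^{-1}\,\Xi_2\Xi_3^{-1}\cdots\Xi_5\Xi_1^{-1}$, which telescopes to the identity automorphism of $M_{\pi+\rho+\sigma+\tau}$; translating back through Proposition~\ref{autME} yields the asserted relation in $UZ(\aB)$.

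The one place where the module twist $\Phi_\pi$ appears — and the step I expect to demand the most care — is the term $d_M\Psi(\rho,\sigma,\tau)$, whose reassociation occurs entirely within the last three factors while $M_\pi$ is carried along inertly on the left. Writing $u := d_M\Psi(\rho,\sigma,\tau) \in UZ(\aB)$, the corresponding automorphism of $M_\pi \widehat{\otimes}_{\aB} M_{\rho+\sigma+\tau} \cong M_{\pi+\rho+\sigma+\tau}$ is $\id_\pi \widehat{\otimes}_{\aB} \psi(u)$, that is, $m \otimes_{\aB} n \mapsto m \otimes_{\aB} (u \cdot n)$. Pulling $u$ across the balanced tensor product and invoking Corollary~\ref{autMEcor} gives $m \otimes_{\aB} (u \cdot n) = (m \cdot u) \otimes_{\aB} n = (\Phi_\pi(u) \cdot m) \otimes_{\aB} n$, so that $\id_\pi \widehat{\otimes}_{\aB} \psi(u) = \psi(\Phi_\pi(u))$; under Proposition~\ref{autME} this automorphism corresponds to $\Phi_\pi(u)$ rather than $u$, which is exactly the twist built into $d_\varphi$. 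By contrast, the remaining edges either keep $M_\tau$ inert on the right, where left multiplication by a central unitary passes through $\id_\tau$ unchanged, or perform an honest multiplication of two adjacent factors, and hence carry no module twist.

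The main obstacle is purely organisational: one must pin down, for each of the five edges, exactly which composite of $\Psi$'s and suppressed canonical associators realises it as $\Xi_i\Xi_{i+1}^{-1}$, confirm that the orientations and inverses match the alternating pattern of $d_\varphi$, and check that every canonical associator introduced along the way cancels against its neighbour so that only the genuine $\Psi$-discrepancies remain. Once the pentagon is drawn correctly and Proposition~\ref{autME} is applied edge by edge, the computation is mechanical; the conceptual content is entirely the coherence, i.e.\ the telescoping, of the pentagon of collapse maps together with the identification of the inert left $\id_\pi$-twist with the module action $\Phi_\pi$.
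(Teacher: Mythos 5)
Your argument is correct, but it is worth noting that the paper itself does not carry out this verification at all: its proof consists of a single reference to \cite[Lemma 1.10 (5)]{Ne06}, where the analogous identity is proved by a direct algebraic computation for pairs $(S,\omega)$ of automorphisms and unitaries, i.e., in the non-abelian group-extension setting that corresponds here to the special situation of Remark~\ref{factor systems TNCPB}. What you do differently is to supply the omitted calculation intrinsically in the bimodule category: you organize the five factors of the twisted coboundary of $d_M\Psi$ as the edge defects of the Mac Lane pentagon of collapse maps $\Xi_1,\dots,\Xi_5$ on $M_\pi\widehat{\otimes}_{\aB}M_\rho\widehat{\otimes}_{\aB}M_\sigma\widehat{\otimes}_{\aB}M_\tau$ and let the cyclic product telescope. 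The two points that make this rigorous are precisely the ones you isolate: first, by Proposition~\ref{autME} together with the left $\aB$-linearity of all collapse maps, conjugating $\psi(u)$ with $u\in UZ(\aB)$ by any Morita equivalence bimodule isomorphism again yields $\psi(u)$, so each edge defect transports to a central unitary on $M_{\pi+\rho+\sigma+\tau}$ independently of the chosen bracketing; second, the balancing relation $m\otimes_{\aB}(u\cdot n)=(m\cdot u)\otimes_{\aB}n$ combined with Corollary~\ref{autMEcor} gives $\id_\pi\widehat{\otimes}_{\aB}\psi(u)=\psi\bigl(\Phi_\pi(u)\bigr)$, which produces the module twist on exactly the one pentagon edge where $M_\pi$ is inert on the left, while the edges with $M_\tau$ inert on the right satisfy $\psi(u)\widehat{\otimes}_{\aB}\id_\tau=\psi(u)$ and hence remain untwisted; a check of the edges confirms that the two orientations reversed in the cyclic order account for the two inverted terms of $d_\varphi$. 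Your route buys a self-contained proof in the stated generality (arbitrary Morita equivalence bimodules, not only those of the form $M_{S(\pi)}$ arising from automorphisms), at the modest cost of the orientation bookkeeping you explicitly defer; the paper's citation buys brevity but tacitly asks the reader to transcribe Neeb's group-theoretic computation into the present setting using the same ingredients (Propositions~\ref{autME} and \ref{module structure pic}, Corollary~\ref{autMEcor}) that you invoke directly.
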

\begin{proof}
For the sake of brevity we omit the lengthy calculation at this point and refer instead to \cite[Lemma 1.10 (5)]{Ne06}.
\end{proof}

\begin{lemma}\label{independent of choices}
The class $[d_M\Psi]$ in $H^3_{\varphi}(\widehat{G},UZ(\aB))$ is independent of all choices made.
\end{lemma}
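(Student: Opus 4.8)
The plan is to show that the cohomology class $[d_M\Psi] \in H^3_\varphi(\widehat{G}, UZ(\aB))$ does not depend on the two types of choices involved in its construction: the choice of representing bimodules $M_\pi \in \varphi(\pi)$ and the choice of isomorphisms $\Psi_{\pi,\rho}: M_\pi \widehat{\otimes}_\aB M_\rho \to M_{\pi+\rho}$. I would handle these one at a time, starting with the easier case.

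First I would fix the family $(M_\pi)_{\pi \in \widehat{G}}$ and vary only the isomorphisms. Suppose $(\Psi_{\pi,\rho})$ and $(\Psi'_{\pi,\rho})$ are two normalized choices of intertwiners for the \emph{same} bimodules. By Proposition~\ref{autME}, for each pair $\pi,\rho$ the automorphism $\Psi'_{\pi,\rho} \circ \Psi_{\pi,\rho}^{-1}$ of $M_{\pi+\rho}$ is implemented by a unique central unitary, yielding a normalized $2$-cochain $\eta \in C^2(\widehat{G}, UZ(\aB))$ with $\Psi'_{\pi,\rho} = \eta(\pi,\rho)\Psi_{\pi,\rho}$. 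The computation is then formally identical to the one already carried out in the proof of Theorem~\ref{sat and cocycles}(b): substituting $\Psi' = \eta \Psi$ into the definition of $d_M\Psi'$ and pushing the central unitaries through using Corollary~\ref{autMEcor} and Proposition~\ref{module structure pic}, I expect to obtain $d_M\Psi' = (d_\varphi \eta) \cdot d_M\Psi$, where $d_\varphi$ is the group-cohomology coboundary for the $\widehat{G}$-module structure on $UZ(\aB)$ induced by $\varphi$ (\cf Remark~\ref{G module structure}). Hence $d_M\Psi'$ and $d_M\Psi$ differ by a coboundary and represent the same class in $H^3_\varphi(\widehat{G}, UZ(\aB))$.

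Next I would allow the bimodules themselves to change. Let $(M'_\pi)_{\pi\in\widehat{G}}$ be a second family of representatives with $M'_\pi \simeq M_\pi$ in $\Pic(\aB)$, equipped with its own intertwiners $\Psi'_{\pi,\rho}$. Choose Morita equivalence bimodule isomorphisms $T_\pi: M_\pi \to M'_\pi$ for each $\pi$, with $T_0 = \id_\aB$. These transport the primed data back onto the family $(M_\pi)$: defining $\tilde\Psi_{\pi,\rho} := T_{\pi+\rho}^{-1} \circ \Psi'_{\pi,\rho}\circ (T_\pi \otimes_\aB T_\rho)$ gives a new normalized system of intertwiners for the \emph{same} bimodules $(M_\pi)$. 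A direct diagram chase—using functoriality of $\widehat{\otimes}_\aB$ and that the $T_\pi$ are intertwined compatibly—should show that conjugation by the $T_\pi$ identifies $d_M\tilde\Psi(\pi,\rho,\sigma)$ with $d_{M'}\Psi'(\pi,\rho,\sigma)$ under the induced isomorphism $UZ(\aB) \to UZ(\aB)$ on central unitaries; since that isomorphism is the identity on $UZ(\aB)$ (central unitaries are intrinsic to $\aB$ and not moved by a bimodule isomorphism), I expect $d_M\tilde\Psi = d_{M'}\Psi'$ as $3$-cochains. Combining this with the first step, $d_{M'}\Psi' = d_M\tilde\Psi$ and $d_M\tilde\Psi$ differs from $d_M\Psi$ only by a coboundary, so all choices yield the same class.

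The main obstacle I anticipate is purely bookkeeping rather than conceptual: carefully tracking how the central unitaries produced by Proposition~\ref{autME} interact with the tensor-factor positions under the twisted module structure $\Phi_\pi$ of Corollary~\ref{autMEcor}. In particular, in the second step one must verify that conjugating the associator $d_M\Psi$ by the transport isomorphisms $T_\pi$ genuinely acts trivially on the extracted central scalars, which relies on the fact that a Morita equivalence bimodule isomorphism commutes with the central action in the precise sense of Proposition~\ref{module structure pic}. Once this compatibility is pinned down, both reductions are routine, and I would, as the authors do for the cocycle identity in the preceding lemma, be inclined to reference the parallel computation in \cite{Ne06} rather than reproduce the lengthy verification in full.
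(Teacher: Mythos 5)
Your proposal matches the paper's proof essentially step for step: the paper likewise first varies the intertwiners for fixed modules (using Proposition~\ref{autME} to produce $h\in C^2(\widehat{G},UZ(\aB))$ so that $d_M\Psi'$ and $d_M\Psi$ differ by the coboundary $d_{\varphi}h$ — your formula is correct up to an inverse convention, which does not affect cohomologousness) and then transports along isomorphisms $T_\pi$ to show the 3-cocycle is literally unchanged, the only cosmetic difference being that you pull the primed intertwiners back via $T_\pi^{-1}$ whereas the paper pushes $\Psi$ forward via the $T_\pi$. The bookkeeping point you flag — that the extracted central unitaries are untouched by conjugation with the $T_\pi$ because these are in particular left $\aB$-module maps (\cf Proposition~\ref{module structure pic}) — is exactly what the paper's explicit computation in its step (ii) verifies.
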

\begin{proof}
(i) We first show that the class $[d_M\Psi]$ is independent of the choice of the family $(\Psi_{\pi,\rho})_{\pi,\rho\in\widehat{G}}$. Therefore, let $(\Psi'_{\pi,\rho})_{\pi,\rho\in\widehat{G}}$ be another choice and note that Proposition \ref{autME} implies that there exists an element $h\in C^2(\widehat{G},UZ(\aB))$ satisfying $\Psi'_{\pi,\rho}=h(\pi,\rho)\Psi_{\pi,\rho}$ for all $\pi,\rho\in\widehat{G}$. A~short calculation then shows that
\begin{align*}
\Psi'_{\pi + \rho, \sigma}\circ(\Psi'_{\pi, \rho} \tensor_{\aB} \id_\sigma)=h(\pi+\rho,\sigma)h(\pi,\rho)d_M\Psi(\pi,\rho,\sigma)(\Psi_{\pi,\rho+\sigma}\circ(\id_\pi\tensor_{\aB}\Psi_{\rho,\sigma}))
\end{align*}
holds for all $\pi,\rho\sigma\in\widehat{G}$. On the other hand, it follows from Proposition \ref{module structure pic} that
\begin{align*}
&d_M\Psi'(\pi,\rho,\sigma)(\Psi'_{\pi,\rho+\sigma}\circ(\id_\pi\tensor_{\aB}\Psi'_{\rho,\sigma}))
\\
=&d_M\Psi'(\pi,\rho,\sigma)h(\pi,\rho+\sigma)\pi.h(\rho,\sigma)(\Psi_{\pi,\rho+\sigma}\circ(\id_\pi\tensor_{\aB}\Psi_{\rho,\sigma}))
\end{align*}
holds for all $\pi,\rho,\sigma\in\widehat{G}$. From these observations we can now easily conclude that the 3-cocycles $d_M\Psi'$ and $d_M\Psi$ are cohomologous.

(ii) As a second step, we show that the class $[d_M\Psi]$ does not dependent on the choice of the family $(M_\pi)_{\pi\in\widehat{G}}$. For this purpose, let $(M'_\pi)_{\pi\in\widehat{G}}$ be another choice and note that the construction implies that there is a family $(T_\pi:M_\pi\rightarrow M'_\pi)_{\pi \in \widehat{G}}$ of Morita equivalence $\aB-\aB$-bimodule isomorphisms. This family can now be used to define another family $(\Psi''_{\pi, \rho}:M'_\pi\widehat{\otimes}_{\aB} M'_\rho\rightarrow M'_{\pi+\rho})_{\pi, \rho\in\widehat{G}}$ of Morita equivalence $\aB-\aB$-bimodule isomorphisms~by
\begin{align*}
\Psi'_{\pi,\rho}:=T_{\pi+\rho}\circ\Psi_{\pi,\rho}\circ(T^+_\pi\otimes_{\aB}T^+_\rho).
\end{align*}

Then an explicit computation shows that
\begin{align*}
	d_M\Psi'(\pi,\rho,\sigma)
	&= \Psi'_{\pi + \rho, \sigma}\circ(\Psi'_{\pi, \rho} \tensor_{\aB} \id_\sigma)\circ(\id_\pi \tensor_{\aB} \Psi'^+_{\rho, \sigma})\circ\Psi'^+_{\pi, \rho +\sigma}
	\\
	&= T_{\pi+\sigma+\rho}\circ\Psi_{\pi+\rho,\sigma}\circ(T^+_{\pi+\rho}\otimes_{\aB}T^+_{\sigma})
	\\
	&\qquad \circ(T_{\pi+\rho}\otimes_{\aB}\id_{\sigma})\circ(\Psi_{\pi,\rho}\otimes_{\aB}\id_{\sigma})\circ(T^+_\pi\otimes_{\aB}T^+_\rho\otimes_{\aB}\id_{\sigma})
	\\
	&\qquad \circ(\id_\pi\otimes_{\aB}T_\rho\otimes_{\aB}T_\sigma)\circ(\id_\pi\otimes_{\aB}\Psi^+_{\rho,\sigma})\circ(\id_\pi\otimes_{\aB}T^+_{\rho+\sigma})
	\\
	&\qquad \circ(T_\pi\otimes_{\aB}T_{\rho+\sigma})\circ\Psi^+_{\pi,\rho+\sigma}\circ T^+_{\pi+\rho+\sigma}
	\\
	&= d_M\Psi(\pi,\rho,\sigma)
\end{align*}
holds for all $\pi,\rho,\sigma\in\widehat{G}$. We conclude that $d_M\Psi'=d_M\Psi$, \ie, that the 3-cocycle $d_M\Psi$ is unchanged.
\end{proof}

\begin{defn}
Let $\varphi:\widehat{G}\rightarrow\Pic(\aB)$ be a group homomorphism. We call 
\begin{align*}
\chi(\varphi):=[d_M\Psi]\in H^3_{\varphi}(\widehat{G},UZ(\aB))
\end{align*}
the \emph{characteristic class} of $\varphi$.
\end{defn}

The following result provides a group theoretic criterion for the non-emptiness of the set $\Ext(\aB,G,\varphi)$.

\begin{thm} \label{classsatactgrpcomm}
Let $G$ be a compact Abelian group and $\aB$ a unital C\Star algebra. Furthermore, let $\varphi:\widehat{G}\rightarrow\Pic(\aB)$ be a group homomorphism. Then $\Ext(\aB,G,\varphi)$ is non-empty if and only if the class $\chi(\varphi)\in H_{\varphi}^3(\widehat{G},UZ(\aB))$ vanishes. 
\end{thm}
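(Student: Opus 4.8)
The plan is to identify the characteristic class $\chi(\varphi)$ with the obstruction to associativity of the multiplication built from a normalized family $(M_\pi,\Psi_{\pi,\rho})_{\pi,\rho\in\widehat{G}}$, and then to feed the constructions of Section~\ref{prelud} into both directions. Throughout I fix, as set up at the beginning of this subsection, a normalized family with $M_0=\aB$, $\Psi_{0,0}=\id_\aB$ and $\Psi_{\pi,0}=\Psi_{0,\pi}=\id_\pi$, whose obstruction $d_M\Psi\in Z^3_{\varphi}(\widehat{G},UZ(\aB))$ represents $\chi(\varphi)$ by Lemma~\ref{independent of choices}.

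For the implication $\Ext(\aB,G,\varphi)\neq\emptyset\Rightarrow\chi(\varphi)=0$ I would take a free C\Star dynamical system $(\aA,G,\alpha)$ representing a class in $\Ext(\aB,G,\varphi)$. By Remark~\ref{rmk nice eq} it yields a genuine factor system $(A(\pi),m_{\pi,\rho})_{\pi,\rho\in\widehat{G}}$ for $\varphi$, which is a legitimate normalized choice of family. Since a factor system satisfies the associativity condition~\eqref{eq:assoc}, substituting this condition into the definition of the obstruction and using the unitarity of the maps $\Psi_{\rho,\sigma}$ collapses $d_M\Psi(\pi,\rho,\sigma)$ to the identity automorphism of $M_{\pi+\rho+\sigma}$, i.e. to the constant cochain $1\in UZ(\aB)$. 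Hence $[d_M\Psi]=0$, and since this class equals $\chi(\varphi)$ independently of all choices (Lemma~\ref{independent of choices}), we conclude $\chi(\varphi)=0$.

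For the converse $\chi(\varphi)=0\Rightarrow\Ext(\aB,G,\varphi)\neq\emptyset$ I would exploit that $d_M\Psi$ is then a coboundary, say $d_M\Psi=d_{\varphi}h$ for a normalized $2$-cochain $h\in C^2(\widehat{G},UZ(\aB))$. Twisting the comparison isomorphisms by setting $\Psi'_{\pi,\rho}:=h(\pi,\rho)\Psi_{\pi,\rho}$ leaves the bimodules $M_\pi$ and the normalization unchanged, and the explicit transformation rule already computed in the proof of Lemma~\ref{independent of choices}(i) shows that $d_M\Psi'$ differs from $d_M\Psi$ exactly by the coboundary $d_{\varphi}h$; arranging the variance of $h$ so that this cancellation occurs yields $d_M\Psi'=\id$. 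By Proposition~\ref{thm:factor_sys cptab} the twisted family $(M_\pi,\Psi'_{\pi,\rho})_{\pi,\rho\in\widehat{G}}$ is then a factor system for $\varphi$, and Theorem~\ref{fac sys into free} turns it into a free C\Star dynamical system $(\aA,G,\alpha)$ with $A(\pi)=M_\pi$ for all $\pi$, whence $\varphi_\aA=\varphi$ and $[(\aA,G,\alpha)]\in\Ext(\aB,G,\varphi)$.

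The one step I would treat most carefully is the bookkeeping of signs and variance in the converse: I must confirm that the transformation factor relating $d_M\Psi'$ to $d_M\Psi$ under $\Psi\mapsto h\Psi$ is precisely the group-cohomological coboundary $d_{\varphi}h$ (up to inversion), so that a coboundary obstruction can indeed be annihilated by a suitable $h$. Replacing $h$ by $h^{-1}$ if necessary removes any remaining sign ambiguity, since coboundaries form a subgroup. This is the only genuinely computational point, and the formula needed is already available from the proof of Lemma~\ref{independent of choices}.
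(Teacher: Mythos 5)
Your proposal is correct and follows essentially the same route as the paper: the forward direction extracts the factor system $(A(\pi),m_{\pi,\rho})_{\pi,\rho\in\widehat{G}}$ from a free system via Remark~\ref{rmk nice eq} so that associativity trivializes the obstruction, and the converse twists $\Psi'_{\pi,\rho}:=h(\pi,\rho)\Psi_{\pi,\rho}$ to kill the coboundary (the paper writes $d_M\Psi=d_{\varphi}h^{-1}$, matching your remark on replacing $h$ by $h^{-1}$) and then applies Theorem~\ref{fac sys into free}. Your explicit appeal to Lemma~\ref{independent of choices} to identify the class of the dynamical system's factor system with $\chi(\varphi)$ makes a step the paper leaves implicit, but it is the same argument.
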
 
\begin{proof}
($\Rightarrow$) Suppose first that $\Ext(\aB,G,\varphi)$ is non-empty and let $(\aA,G,\alpha)$ be a free C\Star dynamical system representing an element in $\Ext(\aB,G,\varphi)$. Then $(\aA,G,\alpha)$ gives rise to a factor system for the map $\varphi$ of the form $(A(\pi),m_{\pi, \rho})_{\pi, \rho\in\widehat{G}}$ (\cf Remark \ref{rmk nice eq}) and the associativity of the multiplication implies that the corresponding characteristic class $\chi(\varphi)\in H_{\varphi}^3(\widehat{G},UZ(\aB))$ vanishes. 

($\Leftarrow$) Let $(M_{\pi})_{\pi\in\widehat{G}}$ be a family of Morita equivalence $\aB-\aB$-bimodules and 
\begin{align*}
(\Psi_{\pi,\rho}:M_{\pi}\widehat{\otimes}_{\aB}M_{\rho}\rightarrow M_{\pi+\rho})_{\pi,\rho\in\widehat{G}}
\end{align*}
a family of Morita equivalence $\aB-\aB$-bimodules isomorphisms as described in the introduction. Furthermore, suppose, conversely, that the class 
\begin{align*}
\chi(\varphi)=[d_M\Psi]\in H_{\varphi}^3(\widehat{G},UZ(\aB))
\end{align*}
vanishes. Then there exists an element $h\in C^2(\widehat{G},UZ(B))$ with $d_M\Psi=d_{\varphi}h^{-1}$ which can be use to define a family 
$(\Psi'_{\pi,\rho}:M_\pi\widehat{\otimes}_{\aB} M_\rho\rightarrow M_{\pi+\rho})_{\pi,\rho\in\widehat{G}}$ of Morita equivalence bimodule $\aB-\aB$-isomorphism by 
\begin{align*}
\Psi'_{\pi,\rho}:=h(\pi,\rho)\Psi_{\pi,\rho}.
\end{align*}
The construction implies that $d_M\Psi'=\one_{\aB}$. In particular, it follows that $(M_\pi, \Psi'_{\pi, \rho})_{\pi, \rho\in\widehat{G}}$ is a factor system for the map $\varphi$ and we can finally conclude from Theorem \ref{fac sys into free} that the set $\Ext(\aB,G,\varphi)$ is non-empty.
\end{proof}

\begin{rmk}\label{inv elts}
The group of outer automorphisms $\Out(\aB)$ is always a subgroup of the Picard group $\Pic(\aB)$ (\cf Remark \ref{out_in_pic}). The intention of this remark is to describe the elements of the set $\Ext(\aB,G,\varphi)$ for a given group homomorphism $\varphi:\widehat{G}\rightarrow\Out(\aB)$. For this purpose, let $(\aA,G,\alpha)$ be a free C\Star dynamical system representing an element of $\Ext(\aB,G,\varphi)$. Then it is not hard to see that each isotypic component contains an invertible element in~$\aA$. In fact, it follows from Corollary \ref{equcondsatactgrpcomm} that the map 
\begin{align}
A(-\pi)\widehat{\otimes}_{\aB}A(\pi)\rightarrow\aB, \quad x\otimes_{\aB} y\mapsto xy\label{can multip}
\end{align}
is an isomorphism of Morita equivalence $\aB-\aB$-bimodules for all $\pi\in\widehat{G}$. Moreover, the assumption on $\varphi$ implies that for each $\pi\in\widehat{G}$ we have $\varphi(\pi)=[A(\pi)]$; that is, there is an automorphism $S(\pi)\in\Aut(\aB)$ and an isomorphism $T_\pi:M_{S(\pi)}\rightarrow A(\pi)$ of Morita equivalence $\aB-\aB$-bimodules. If we now define $u_\pi:=T_\pi(\one_{\aB})$, then a few moments thought shows that 
\begin{align*}
A(\pi)=u_\pi\aB=\aB u_\pi,
\end{align*}
from which we conclude together with equation (\ref{can multip}) that 
\begin{align*}
u_\pi\aB u_{-\pi}=u_{-\pi}\aB u_\pi=\aB.
\end{align*}
Consequently, the element $u_\pi\in A(\pi)$ is invertible in $\aA$. Conversely, let $(\aA,G,\alpha)$ be a C\Star dynamical system such that each isotypic component contains an invertible element. Then it is easily verified that $(\aA,G,\alpha)$ is free and that the corresponding group homomorphism $\varphi_{\aA}:\widehat{G}\rightarrow\Pic(\aB)$ from Proposition \ref{prop:sat_vs_pic} takes values in $\Out(\aB)$. Indeed, if for every $\pi \in \widehat{G}$ we have an element $u_\pi \in A(\pi)$ that is invertible in $\aA$, then $A(\pi)=u_\pi\aB=\aB u_\pi$, and the map 
\ref{can multip} is an isomorphism of Morita equivalence $\aB-\aB$-bimodules. In particular, $(\aA,G,\alpha)$ represents an element in $\Ext(\aB,G,\varphi_{\aA})$.

C\Star dynamical systems with the property that each isotypic component contains invertible elements have been studied, for example, in \cite{Sch04,Wa11b,Wa11e,Wass89} and may be considered as a noncommutative version of trivial principal bundles (\cf Remark \ref{tncpb rmk}).
\end{rmk}

\begin{rmk}\label{factor systems TNCPB}
The aim of the following discussion is to explain how to classify the C\Star dynamical systems described in Remark \ref{inv elts}. Indeed, let $(\aA,G,\alpha)$ be a C\Star dynamical system such that each isotypic component contains an invertible element. Furthermore, let $(u_\pi)_{\pi\in\widehat{G}}$ be a family of unitaries with $u_\pi\in A(\pi)$ and $u_0=\one_{\aB}$. Then the maps $S:\widehat{G}\rightarrow\Aut(\aB)$ and $\omega:\widehat{G}\times\widehat{G}\rightarrow U(\aB)$ given by 
\begin{align*}
S(\pi)(b):=u_{\pi}bu_{\pi}^* \quad \text{and} \quad \omega(\pi,\sigma):=u_{\pi}u_{\rho}u_{\pi+\rho}^*,
\end{align*}
give rise to an element $(S,\omega)\in C^1(\widehat{G},\Aut(\aB))\times C^2(\widehat{G},U(\aB))$
satisfying for all $\pi,\rho,\sigma\in\widehat{G}$ and $b\in\aB$ the relations
\begin{align}
S(\pi)(\omega(\rho,\sigma))\omega(\pi,\rho+\sigma)=\omega(\pi+\rho,\sigma)\omega(\pi,\rho)\label{fac sys grp I}
\\
S(\pi)(S(\rho)(b))=\omega(\pi,\rho)S(\pi+\rho)(b)\omega(\pi,\rho)^*.\label{fac sys grp II}
\end{align}
Conversely, each pair $(S,\omega)\in C^1(\widehat{G},\Aut(\aB))\times C^2(\widehat{G},U(\aB))$ satisfying, for all $\pi,\rho,\sigma\in\widehat{G}$ and $b\in\aB$, the relations (\ref{fac sys grp I}) and (\ref{fac sys grp II}) defines a factor system $(M_\pi)_{\pi\in\widehat{G}}$ and $(\Psi_{\pi,\rho})_{\pi,\rho\in\widehat{G}}$ given by $M_\pi:=M_{S(\pi)}$ and 
\begin{align*}
\Psi_{\pi,\rho}(b\otimes_{\aB} b'):=bS(\pi)(b')\omega(\pi,\rho).
\end{align*}
The associated free C\Star dynamical system represents an element of $\Ext(\aB,G,\varphi)$ with 
$\varphi:=\pr_{\aB}\circ S:\widehat{G}\rightarrow\Out(\aB)$, where $\pr_{\aB}:\Aut(\aB)\rightarrow\Out(\aB)$ denotes the canonical quotient homomorphism. It is worth pointing out that in this situation, the involution can be expressed explicitly in terms of the pair $(S,\omega)$ (\cf \cite[Construction A24]{Wa11e}). 

\end{rmk}

\section{Principal Bundles and Group Cohomology}
\label{sec:principal_bdl}

Let $P$ and $X$ be compact spaces. Furthermore, let $G$ be a compact group. Each locally trivial principal bundle $(P,X,G,q,\sigma)$ can be considered as a geometric object that is glued together from local pieces which are trivial, \ie, which are of the form $U\times G$ for some small open subset $U$ of $X$. This approach immediately leads to the concept of $G$-valued cocycles and therefore to a cohomology theory, called the \v Cech cohomology for the pair $(X,G)$. This cohomology theory gives a complete classification of locally trivial principal bundles with structure group $G$ and base space $X$ (see \cite{To00}). On the other hand, Theorem \ref{sat=free/comm} implies that each locally trivial principal bundle $(P,X,G,q,\sigma)$ gives rise to a free C\Star dynamical system $(C(P),G,\alpha_{\sigma})$ and it is therefore natural to ask how the \v Cech cohomology for the pair $(X,G)$ is related to our previous classification theory. But since our construction in Section \ref{prelud} is global in nature, it is not obvious how to encode local triviality in our factor system approach (though we recall that in the smooth category there is a one-to-one correspondence between free actions and locally trivial principal bundles). For this reason we now focus our attention on topological principal bundles, a notion of principal bundles which need not be locally trivial (\cf \cite[Remark 4.68]{ReWi98}).

\begin{defn}~\label{def: top bundles}
	\begin{enumerate}
		\item
		Let $P$ be a compact space and $G$ a compact group. We call a continuous action $\sigma:P\times G\rightarrow P$ which is free a \emph{topological principal bundle} or, more precisely, a \emph{topological principal $G$-bundle over $X:=P/G$}.
		\item
		We call two topological principal bundles $\sigma:P\times G\rightarrow P$ and $\sigma':P'\times G\rightarrow P'$ over $X$ \emph{equivalent} if there is a $G$-equivariant homeomorphism $h:P\rightarrow P'$ such that the inducex map on $X$ is the identity. 
	\end{enumerate}
\end{defn}


We now come back to our C\Star algebraic setting. Let $G$ be a compact Abelian group and $\sigma:P\times G\rightarrow P$ a topological principal $G$-bundle over $X$. Then for $\pi \in \widehat G$ the isotypic component $C(P)(\pi)$ is a finitely generated and projective $C(X)$-module according to Theorem~\ref{isononcommvec}. Therefore, the Theorem of Serre and Swan (\cf \cite{Swa62}) gives rise to a locally trivial complex line bundle $\mathbb{V}_\pi$ over $X$ such that $C(P)(\pi)$ as a right $C(X)$-module is isomorphic to the corresponding space $\Gamma\mathbb{V}_\pi$ of continuous sections. We point out that this isomorphism can be extended to an isomorphism between Morita equivalence $C(X)-C(X)$-bimodules. In what follows we identify the class of $C(P)(\pi)$ in $\Pic(C(X))$ with the class of the corresponding complex line bundle $\mathbb{V}_\pi$ in $\Pic(X)$. Then the group homomorphism $\varphi: \widehat{G} \rightarrow \Pic(C(X))$ induced by the topological principal bundle $\sigma:P\times G\rightarrow P$ is given by
\begin{align*}
	\varphi(\pi) = [\mathbb{V}_\pi] \in \Pic(X) \subseteq \Pic(C(X))
\end{align*}
(\cf Example~\ref{expl:pic_grp} and Proposition \ref{prop:sat_vs_pic}). In particular, the $\widehat{G}$-module structure on the unitary group $U(C(X))=C(X, \mathbb{T})$ induced by this group homomorphism (\cf Remark~\ref{G module structure}) is trivial since the left and right action of $C(X)$ on $C(P)(\pi)$ commute. At this point it is useful to introduce the following definition. 


\begin{defn}
Let $G$ be a compact Abelian group and $X$ a compact space. Furthermore, let $\varphi:\widehat{G}\rightarrow\Pic(X)$ be a group homomorphism. We denote by $\Ext_{\text{top}}(X,G,\varphi)$ the subset of $\Ext(C(X),G,\varphi)$ describing the topological principal $G$-bundles over $X$ inducing the map $\varphi$.
\end{defn}



We continue with a compact space $X$ and a group homomorphism $\varphi:\widehat{G} \to \Pic(X)\subseteq\Pic(C(X))$. Given a factor system $(M_\pi, \Psi_{\pi, \rho})_{\pi, \rho\in\widehat{G}}$ for the map $\varphi$, the canonical flip
\begin{align*}
\text{fl}_{\pi, \rho}:M_\pi\widehat{\otimes}_{C(X)}M_\rho\rightarrow M_\rho\widehat{\otimes}_{C(X)}M_\pi, \quad x\otimes_{C(X)}y\mapsto y\otimes_{C(X)}x
\end{align*}
defines an isomorphism of Morita equivalence $C(X)-C(X)$-bimodules for all $\pi,\rho\in\widehat{G}$. In particular, we may examine, for all $\pi,\rho\in\widehat{G}$, the automorphism
\begin{equation*}
	\mathcal{C}_M\Psi(\pi,\rho):=\Psi_{\rho,\pi}\circ\text{fl}_{\pi,\rho}\circ\Psi^+_{\pi,\rho}
\end{equation*}
of the Morita equivalence $C(X)-C(X)$-bimodule $M_{\pi+\rho}$. According to Proposition \ref{autME}, the map $\mathcal{C}_M\Psi$ can be considered as a normalized 2-cochain on $\widehat{G}$ with values in $C(X,\mathbb{T})$, \ie, as an element in $C^2(\widehat{G},C(X,\mathbb{T}))$. In fact, even more is true:

\begin{lemma}\label{comm II}
	The map $\mathcal{C}_M\Psi$ defines an antisymmetric element in $Z^2(\widehat{G},C(X,\mathbb{T}))$. 
\end{lemma}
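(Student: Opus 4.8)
The plan is to reduce everything to scalar bookkeeping via Proposition~\ref{autME}, and then to extract the cocycle relation from the interplay of three structural facts: the \emph{unitarity} of the $\Psi_{\pi,\rho}$, the \emph{naturality} of the canonical flip, and the \emph{associativity} \eqref{eq:assoc} of the factor system. First I would invoke Proposition~\ref{autME}: since $\varphi$ takes values in $\Pic(X)$, each $M_\pi$ is the section module of a line bundle, the left and right $C(X)$-actions agree, and $UZ(C(X)) = C(X,\mathbb{T})$; hence every automorphism of the Morita equivalence bimodule $M_{\pi+\rho}$ is multiplication by a unique element of $C(X,\mathbb{T})$. This lets me treat $\mathcal{C}_M\Psi(\pi,\rho)$ as an element of $C(X,\mathbb{T})$, recorded by the defining relation
\begin{equation*}
	\Psi_{\rho,\pi}\circ\text{fl}_{\pi,\rho}=\mathcal{C}_M\Psi(\pi,\rho)\cdot\Psi_{\pi,\rho}.
\end{equation*}
Because the $\widehat{G}$-module structure on $C(X,\mathbb{T})$ is trivial here (the two $C(X)$-actions commute), the cocycle condition to be proved is the plain one for trivial coefficients, and normalization $\Psi_{\pi,0}=\id=\Psi_{0,\pi}$ together with $\text{fl}_{\pi,0}=\id$ (cf.\ Remark~\ref{rmk nice eq}) shows $\mathcal{C}_M\Psi$ is a normalized $2$-cochain.

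For \emph{antisymmetry} I would use that the flip is an involution, $\text{fl}_{\rho,\pi}\circ\text{fl}_{\pi,\rho}=\id$, together with unitarity of the $\Psi$'s. Composing the two defining relations for $\mathcal{C}_M\Psi(\pi,\rho)$ and $\mathcal{C}_M\Psi(\rho,\pi)$ as automorphisms of $M_{\pi+\rho}$ and cancelling $\Psi_{\pi,\rho}^+\Psi_{\pi,\rho}=\id$ collapses the product to the identity, giving $\mathcal{C}_M\Psi(\pi,\rho)\,\mathcal{C}_M\Psi(\rho,\pi)=1$ in $C(X,\mathbb{T})$, i.e.\ antisymmetry.

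The crux is the $2$-cocycle identity, and the cleanest route is to prove the stronger bi-multiplicativity $\mathcal{C}_M\Psi(\pi,\rho+\sigma)=\mathcal{C}_M\Psi(\pi,\rho)\,\mathcal{C}_M\Psi(\pi,\sigma)$, from which the trivial-coefficient cocycle relation is immediate (an antisymmetric bicharacter is automatically a $2$-cocycle). To do this I would work in the (strict) symmetric monoidal category of line bundles over $X$ and compute the automorphism $\Psi_{\rho\sigma\pi}\circ\text{fl}_{\pi,\rho\sigma}\circ\Psi_{\pi\rho\sigma}^+$ of $M_{\pi+\rho+\sigma}$ in two ways. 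On one hand, naturality of the flip with respect to $\Psi_{\rho,\sigma}$ together with associativity \eqref{eq:assoc} identifies this scalar with $\mathcal{C}_M\Psi(\pi,\rho+\sigma)$. On the other hand, factoring $\text{fl}_{\pi,\rho\sigma}$ through the hexagon identity $\text{fl}_{\pi,\rho\sigma}=(\id_\rho\otimes\text{fl}_{\pi,\sigma})\circ(\text{fl}_{\pi,\rho}\otimes\id_\sigma)$ and peeling off the two flips one at a time, each step applying the defining relation for $\mathcal{C}_M\Psi$ and re-associating via \eqref{eq:assoc} (pulling the resulting central scalars out of the $C(X)$-linear maps), produces the scalar $\mathcal{C}_M\Psi(\pi,\rho)\,\mathcal{C}_M\Psi(\pi,\sigma)$. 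Equating the two evaluations and cancelling the isomorphism $\Psi_{\pi\rho\sigma}$ yields the desired identity.

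The main obstacle I anticipate is the coherence bookkeeping: one must handle iterated internal tensor products $\widehat{\otimes}_{C(X)}$, track the associativity isomorphisms hidden in \eqref{eq:assoc}, and justify the hexagon and naturality of $\text{fl}$ in this Morita-bimodule setting. I would dispatch this by appealing to the fact that $\widehat{\otimes}_{C(X)}$ endows the line bundles over $X$ with a genuinely symmetric monoidal structure (so the associators may be taken strict and the two hexagons hold), reducing the computation to the scalar manipulations above, exactly parallel to the $3$-cocycle computation invoked for $d_M\Psi$ (cf.\ \cite[Lemma 1.10]{Ne06}).
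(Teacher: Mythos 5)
Your proposal is correct, and its core takes a genuinely different route from the paper's proof. The setup coincides: the paper also invokes Proposition~\ref{autME} (in the paragraph preceding the lemma) to regard $\mathcal{C}_M\Psi$ as a normalized $2$-cochain with values in $C(X,\mathbb{T})$, and it simply declares the antisymmetry $\mathcal{C}_M\Psi(\rho,\pi)=\mathcal{C}_M\Psi(\pi,\rho)^*$ obvious --- your two-line argument from $\text{fl}_{\rho,\pi}\circ\text{fl}_{\pi,\rho}=\id$ and unitarity of the $\Psi_{\pi,\rho}$ is exactly why it is. For the cocycle identity, however, the paper does not pass through bimultiplicativity: it notes that by equation~\eqref{eq:assoc} the maps $\Psi_{\sigma+\rho,\pi}\circ(\Psi_{\sigma,\rho}\otimes_{C(X)}\id_\pi)$ and $\Psi_{\sigma,\rho+\pi}\circ(\id_\sigma\otimes_{C(X)}\Psi_{\rho,\pi})$ agree, rewrites each --- via the defining relation of $\mathcal{C}_M\Psi$, (implicitly) naturality of the flip, and \eqref{eq:assoc} again --- as a scalar multiple of the single unitary $\Psi_{\pi,\rho+\sigma}\circ(\id_\pi\otimes_{C(X)}\Psi_{\rho,\sigma})$, picking up $\mathcal{C}_M\Psi(\rho,\sigma)\,\mathcal{C}_M\Psi(\pi,\rho+\sigma)$ one way and $\mathcal{C}_M\Psi(\pi,\rho)\,\mathcal{C}_M\Psi(\pi+\rho,\sigma)$ the other, and equates the scalars; structurally this is ``permute three tensor legs back to standard order in two ways.'' You instead expand the single flip $\text{fl}_{\pi,\rho+\sigma}$ by the hexagon identity and peel off $\text{fl}_{\pi,\rho}$ and $\text{fl}_{\pi,\sigma}$ one at a time, and this computation does close: the delicate points --- well-definedness and unitarity of the flip, and pulling central scalars through tensor legs without a twist --- hold precisely because $\varphi$ takes values in $\Pic(X)$, so the left and right $C(X)$-actions on each $M_\pi$ agree, as you correctly flag. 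The trade-off: the paper's argument is a bit shorter and avoids the hexagon, while yours proves the strictly stronger fact that $\mathcal{C}_M\Psi$ is additive in each variable --- which is \emph{not} a formal consequence of being an antisymmetric $2$-cocycle, and is exactly what the paper later uses silently when it treats $\mathcal{C}_M\Psi$ as an element of $\Alt^2(\widehat{G},C(X,\mathbb{T}))$ in Theorem~\ref{comm VII}; for full membership in $\Alt^2$ one additionally wants $\mathcal{C}_M\Psi(\pi,\pi)=\one$, which in your framework reduces to the observation that the flip on $M_\pi\widehat{\otimes}_{C(X)}M_\pi$ is the identity for section modules of line bundles.
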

\begin{proof}
	It is obvious that the map $\mathcal{C}_M\Psi$ satisfies $\mathcal{C}_M\Psi(\rho,\pi)=\mathcal{C}_M\Psi(\pi,\rho)^*$ for all $\pi,\rho\in\widehat{G}$. In order to verify that $\mathcal{C}_M\Psi$ defines an element in $Z^2(\widehat{G},C(X,\mathbb{T}))$ we have to show that
	\begin{equation}	\label{eq:2-cocycle}
		\mathcal{C}_M\Psi(\rho,\sigma)\mathcal{C}_M\Psi(\pi,\rho+\sigma)=\mathcal{C}_M\Psi(\pi+\rho,\sigma)\mathcal{C}_M\Psi(\pi,\rho)
	\end{equation}
	holds for all $\pi,\rho,\sigma\in\widehat{G}$. Indeed, explicit computations using the factor system property, equation~\eqref{eq:assoc}, show that
	\begin{gather*}
		\Psi_{\sigma+\rho,\pi}\circ(\Psi_{\sigma,\rho}\otimes_{C(X)}\id_{\pi})=\mathcal{C}_M\Psi(\rho,\sigma)\mathcal{C}_M\Psi(\pi,\rho+\sigma)\Psi_{\pi,\sigma+\rho}\circ(\id_{\pi}\otimes_{C(X)}\Psi_{\rho,\sigma})
		\shortintertext{and}
		\Psi_{\sigma,\rho+\pi}\circ(\id_{\sigma}\otimes_{C(X)}\Psi_{\rho,\pi})=\mathcal{C}_M\Psi(\pi,\rho)\mathcal{C}_M\Psi(\pi+\rho,\sigma)\Psi_{\pi+\rho,\sigma}\circ(\Psi_{\pi,\rho}\otimes_{C(X)}\id_{\pi})
	\end{gather*}
	hold for all $\pi,\rho,\sigma\in\widehat{G}$. Again using equation \eqref{eq:assoc} to the right-hand side of the later expression finally leads to the desired 2-cocycle condition \eqref{eq:2-cocycle}.
\end{proof}

\begin{lemma}\label{comm IV}
	For equivalent factor systems $(M_\pi, \Psi_{\pi, \rho})_{\pi, \rho\in\widehat{G}}$ and $(M'_\pi, \Psi'_{\pi, \rho})_{\pi, \rho\in\widehat{G}}$ we have
	\begin{align*}
	\mathcal{C}_{M'}\Psi'=\mathcal{C}_{M}\Psi.
	\end{align*}
\end{lemma}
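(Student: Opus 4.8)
The plan is to prove the identity $\mathcal{C}_{M'}\Psi' = \mathcal{C}_{M}\Psi$ pointwise, by substituting the equivalence relation \eqref{fac sys eq} directly into the definition of $\mathcal{C}_{M'}\Psi'(\pi,\rho)$ and simplifying. So fix an equivalence of the two factor systems, i.e.\ a family $(T_\pi:M_\pi\rightarrow M'_\pi)_{\pi\in\widehat{G}}$ of Morita equivalence $C(X)-C(X)$-bimodule isomorphisms with $\Psi'_{\pi,\rho}\circ(T_\pi\otimes_{C(X)}T_\rho)=T_{\pi+\rho}\circ\Psi_{\pi,\rho}$ for all $\pi,\rho\in\widehat{G}$. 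Since the $T_\pi$ are unitary, rearranging gives $\Psi'_{\rho,\pi}=T_{\pi+\rho}\circ\Psi_{\rho,\pi}\circ(T_\rho^+\otimes_{C(X)}T_\pi^+)$ and, taking adjoints of the defining relation, $(\Psi'_{\pi,\rho})^+=(T_\pi\otimes_{C(X)}T_\rho)\circ\Psi^+_{\pi,\rho}\circ T^+_{\pi+\rho}$.

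The one structural fact I would record first is that the flip is natural with respect to the family $(T_\pi)$, namely
\[
	\text{fl}'_{\pi,\rho}\circ(T_\pi\otimes_{C(X)}T_\rho)=(T_\rho\otimes_{C(X)}T_\pi)\circ\text{fl}_{\pi,\rho},
\]
which is immediate from the formula $x\otimes_{C(X)}y\mapsto y\otimes_{C(X)}x$ together with the fact that each $T_\pi$ is a bimodule map. Plugging the above expressions for $\Psi'_{\rho,\pi}$ and $(\Psi'_{\pi,\rho})^+$ into $\mathcal{C}_{M'}\Psi'(\pi,\rho)=\Psi'_{\rho,\pi}\circ\text{fl}'_{\pi,\rho}\circ(\Psi'_{\pi,\rho})^+$, the inner block $(T_\rho^+\otimes_{C(X)}T_\pi^+)\circ\text{fl}'_{\pi,\rho}\circ(T_\pi\otimes_{C(X)}T_\rho)$ collapses to $\text{fl}_{\pi,\rho}$ by the naturality relation and $(T_\rho^+\otimes_{C(X)}T_\pi^+)\circ(T_\rho\otimes_{C(X)}T_\pi)=\id$. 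This leaves the conjugation identity
\[
	\mathcal{C}_{M'}\Psi'(\pi,\rho)=T_{\pi+\rho}\circ\mathcal{C}_{M}\Psi(\pi,\rho)\circ T^+_{\pi+\rho}
\]
of automorphisms of $M'_{\pi+\rho}$.

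It then remains to see that conjugation by $T_{\pi+\rho}$ does not change the associated element of $C(X,\mathbb{T})$. By Proposition~\ref{autME} the automorphism $\mathcal{C}_{M}\Psi(\pi,\rho)$ of $M_{\pi+\rho}$ is left multiplication by a unique central unitary $u\in UZ(C(X))=C(X,\mathbb{T})$. Since $T_{\pi+\rho}$ is in particular a homomorphism of left $C(X)$-modules, it commutes with left multiplication by $u$, so $T_{\pi+\rho}\circ(u\cdot)\circ T^+_{\pi+\rho}$ is again left multiplication by $u$, now on $M'_{\pi+\rho}$. Under the identification of Proposition~\ref{autME}, both $\mathcal{C}_{M}\Psi(\pi,\rho)$ and $\mathcal{C}_{M'}\Psi'(\pi,\rho)$ thus correspond to the same $u\in C(X,\mathbb{T})$, which is the claim.

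I expect the only delicate point to be this last step: one must read off the cochain value via the \emph{left} action, exactly as in Proposition~\ref{autME}, and use that $T_{\pi+\rho}$ respects that action, so that the identification of automorphisms of a Morita equivalence bimodule with central unitaries renders the conjugation invisible. Everything preceding it is a bookkeeping substitution driven by the naturality of the flip.
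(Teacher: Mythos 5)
Your proposal is correct and follows essentially the same route as the paper: substitute the equivalence relation \eqref{fac sys eq} into the definition of $\mathcal{C}_{M'}\Psi'(\pi,\rho)$, collapse the inner block via naturality of the flip, and absorb the resulting conjugation by $T_{\pi+\rho}$. In fact you are slightly more careful than the paper at the final step, where the identity $T_{\pi+\rho}\circ\mathcal{C}_M\Psi(\pi,\rho)\circ T^+_{\pi+\rho}=\mathcal{C}_M\Psi(\pi,\rho)$ (read via Proposition~\ref{autME} as equality of central unitaries in $C(X,\mathbb{T})$) is asserted without comment in the paper's computation, whereas you justify it by noting that $T_{\pi+\rho}$ intertwines the left $C(X)$-action.
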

\begin{proof}
By assumption there is a family $(T_\pi:M_\pi\rightarrow M'_\pi)_{\pi \in\widehat{G}}$ of Morita equivalence $C(X)-C(X)$-bimodule isomorphisms satisfying 
\begin{align*}
\Psi'_{\pi,\rho}:=T_{\pi+\rho}\circ\Psi_{\pi,\rho}\circ(T^+_\pi\otimes_{C(X)}T^+_\rho).
\end{align*}
Therefore, an explicit computation shows that for all $\pi, \rho \in \widehat G$ we have
\begin{align*}
	\mathcal{C}_{M'}\Psi'(\pi,\rho)&=\Psi'_{\rho,\pi}\circ\text{fl}'_{\pi,\rho}\circ\Psi'^+_{\pi,\rho}
	\\
	&=T_{\rho+\pi}\circ\Psi_{\rho,\pi}\circ(T^+_\rho\otimes_{C(X)}T^+_\pi)\circ\text{fl}'_{\pi,\rho}\circ(T_\pi\otimes_{C(X)}T_\rho)\circ\Psi^+_{\pi,\rho}\circ T^+_{\pi+\rho}
	\\
	&=T_{\rho+\pi}\circ\Psi_{\rho,\pi}\circ\text{fl}_{\pi,\rho}\circ\Psi^+_{\pi,\rho}\circ T^+_{\pi+\rho}=\mathcal{C}_M\Psi(\pi,\rho)
	\qedhere
\end{align*}
\end{proof}

\begin{lemma}\label{comm III}
Let $(M_\pi, \Psi_{\pi, \rho})_{\pi, \rho\in\widehat{G}}$ and $(M'_\pi, \Psi'_{\pi, \rho})_{\pi, \rho\in\widehat{G}}$ be two factor systems for the map $\varphi$. Then there exists an element $\omega\in Z^2(\widehat{G},C(X,\mathbb{T}))$ satisfying for all $\pi,\rho\in\widehat{G}$
\begin{align*}
\mathcal{C}_{M'}\Psi'(\pi,\rho)=\omega(\pi,\rho)\omega(\rho,\pi)^*\mathcal{C}_{M}\Psi(\pi,\rho).
\end{align*}

\end{lemma}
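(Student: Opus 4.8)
The plan is to reduce the comparison to the case of two factor systems built on one and the same family of bimodules, where their difference is recorded by a single $2$-cocycle, and then to trace how that cocycle enters the commutativity cochain $\mathcal{C}$.

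First I would fix, for each $\pi\in\widehat{G}$, an isomorphism $T_\pi:M_\pi\to M'_\pi$ of Morita equivalence $C(X)-C(X)$-bimodules, which exists because both $M_\pi$ and $M'_\pi$ lie in the class $\varphi(\pi)$; we may and do take $T_0=\id_{C(X)}$. Transporting $\Psi$ along these isomorphisms, I set
\begin{align*}
	\tilde\Psi_{\pi,\rho} := T_{\pi+\rho}\circ\Psi_{\pi,\rho}\circ(T^+_\pi\otimes_{C(X)}T^+_\rho).
\end{align*}
Since each $T_\pi$ is unitary, one checks that $(M'_\pi,\tilde\Psi_{\pi,\rho})_{\pi,\rho\in\widehat{G}}$ is again a factor system for $\varphi$ and that it is equivalent to $(M_\pi,\Psi_{\pi,\rho})_{\pi,\rho\in\widehat{G}}$ via the family $(T_\pi)$. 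Lemma~\ref{comm IV} therefore gives $\mathcal{C}_{M'}\tilde\Psi=\mathcal{C}_M\Psi$, so that it remains only to compare $\mathcal{C}_{M'}\Psi'$ with $\mathcal{C}_{M'}\tilde\Psi$ on the common family $(M'_\pi)$.

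Next, $(M'_\pi,\tilde\Psi_{\pi,\rho})$ and $(M'_\pi,\Psi'_{\pi,\rho})$ are factor systems for $\varphi$ over the same underlying bimodules, so Proposition~\ref{autME} attaches to each pair $\pi,\rho$ a unique central unitary $\omega(\pi,\rho)\in UZ(C(X))=C(X,\mathbb{T})$ with $\Psi'_{\pi,\rho}=\omega(\pi,\rho)^*\,\tilde\Psi_{\pi,\rho}$, and the computation carried out in the proof of Theorem~\ref{sat and cocycles}\,(b) shows that $\omega$ satisfies the $2$-cocycle identity. Because $\varphi$ takes values in $\Pic(X)$, the induced $\widehat{G}$-module structure on $C(X,\mathbb{T})$ is trivial (as recorded before this definition), whence $\omega\in Z^2(\widehat{G},C(X,\mathbb{T}))$ with trivial coefficients.

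Finally I would substitute into the defining formula $\mathcal{C}_{M'}\Psi'(\pi,\rho)=\Psi'_{\rho,\pi}\circ\text{fl}_{\pi,\rho}\circ\Psi'^+_{\pi,\rho}$. Taking the adjoint converts the factor $\omega(\pi,\rho)^*$ attached to $\Psi'_{\pi,\rho}$ into $\omega(\pi,\rho)$, while the factor $\omega(\rho,\pi)^*$ from $\Psi'_{\rho,\pi}$ persists; since both are central unitaries and the module action is trivial, multiplication by them commutes past $\text{fl}_{\pi,\rho}$ and past the bimodule maps $\tilde\Psi_{\rho,\pi}$ and $\tilde\Psi^+_{\pi,\rho}$, so they may be pulled out to yield $\omega(\pi,\rho)\omega(\rho,\pi)^*\,\mathcal{C}_{M'}\tilde\Psi(\pi,\rho)$. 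Combining this with $\mathcal{C}_{M'}\tilde\Psi=\mathcal{C}_M\Psi$ from the first step gives the asserted identity. The only genuinely delicate point is the bookkeeping: one must invoke the triviality of the coefficient module (equivalently, that central unitaries act identically on the left and on the right) both to justify extracting the scalars cleanly and to ensure that $\omega$ really is an element of $Z^2(\widehat{G},C(X,\mathbb{T}))$ with trivial action; the remaining manipulations of adjoints of the unitaries $\Psi'$, $\tilde\Psi$, and $\text{fl}$ are routine.
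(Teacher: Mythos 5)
Your proof is correct and takes essentially the same route as the paper: where the paper simply cites Theorem~\ref{sat and cocycles} (which bundles the transport to a common family of bimodules with Proposition~\ref{autME} and the cocycle identity) together with Lemma~\ref{comm IV} and the same final computation pulling the central unitaries through the flip, you inline that theorem's proof by moving $\Psi$ to the family $(M'_\pi)$ via the isomorphisms $T_\pi$. The only cosmetic difference is the direction of transport, and your convention $\Psi'_{\pi,\rho}=\omega(\pi,\rho)^*\tilde\Psi_{\pi,\rho}$ in fact reproduces the stated formula on the nose (the paper's own displayed computation yields the inverse cocycle, which is immaterial since $\omega\mapsto\omega^*$ preserves $Z^2(\widehat{G},C(X,\mathbb{T}))$).
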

\begin{proof}
To verify the assertion we use Theorem \ref{sat and cocycles}, which implies that 
the factor system $(M'_\pi, \Psi'_{\pi, \rho})_{\pi, \rho\in\widehat{G}}$ is equivalent to a factor system of the form $(M_\pi, \omega(\pi,\rho)\Psi_{\pi, \rho})_{\pi, \rho\in\widehat{G}}$ for some 2-cocycle $\omega\in Z^2(\widehat{G},C(X,\mathbb{T}))$. In particular, we conclude from Lemma~\ref{comm IV} and a short calculation that
\begin{align*}
\mathcal{C}_{M'}\Psi'(\pi,\rho)=\mathcal{C}_M(\omega\Psi)(\pi,\rho)&=(\omega\Psi)_{\rho,\pi}\circ\text{fl}_{\pi,\rho}\circ(\omega\Psi)^+_{\pi,\rho}=\omega(\rho,\pi)\omega(\pi,\rho)^*\mathcal{C}_M\Psi(\pi,\rho)
\end{align*}
holds for all $\pi,\rho\in\widehat{G}$.
\end{proof}

Let us denote by $\Alt^2(\widehat{G},C(X,\mathbb{T}))$ the group of biadditive maps $\widehat{G}\times\widehat{G}\rightarrow C(X,\mathbb{T})$ that vanish on the diagonal. Then a~few moments thought shows that each \mbox{element} $\omega\in Z^2(\widehat{G},C(X,\mathbb{T}))$ gives rise to an element $\lambda_{\omega}\in\Alt^2(\widehat{G},C(X,\mathbb{T}))$ defined~by
\begin{equation*}
	\lambda_{\omega}(\pi,\rho):=\omega(\pi,\rho) \, \omega(\rho,\pi)^*,
\end{equation*}
which only depends on the class $[\omega]\in H^2(\widehat{G},C(X,\mathbb{T}))$. In particular, we obtain a group homomorphism
\begin{align*}
	\lambda:H^2(\widehat{G},C(X,\mathbb{T}))\rightarrow\Alt^2(\widehat{G},C(X,\mathbb{T})), \quad [\omega]\mapsto\lambda_{\omega}
\end{align*}
whose kernel is given by the subgroup $H^2_{\text{ab}}(\widehat{G},C(X,\mathbb{T}))$ of $H^2(\widehat{G},C(X,\mathbb{T}))$ describing the Abelian extensions of $\widehat{G}$ by $C(X,\mathbb{T})$. We recall from \cite[Proposition II.3]{Ne07} that the corresponding short exact sequence
\[0\longrightarrow H^2_{\text{ab}}(\widehat{G},C(X,\mathbb{T}))\longrightarrow H^2(\widehat{G},C(X,\mathbb{T}))\stackrel{\lambda}{\longrightarrow}\Alt^2(\widehat{G},C(X,\mathbb{T}))\longrightarrow 0
\]is split. Moreover, we write $\pr_{\text{ab}}:H^2(\widehat{G},C(X,\mathbb{T}))\rightarrow H^2_{\text{ab}}(\widehat{G},C(X,\mathbb{T}))$ for the induced projection map.

\begin{prop}\label{comm V}
	The class $\pr_{\emph{ab}}([\mathcal{C}_M\Psi])\in H^2_{\emph{ab}}(\widehat{G},C(X,\mathbb{T}))$ does not depend on the choice of the factor system and is therefore an invariant for the set $\Ext(C(X),G,\varphi)$.
\end{prop}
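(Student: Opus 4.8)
The plan is to reduce the assertion to a single cohomological identity and then settle that identity using the splitting of \cite{Ne07}. First I would dispose of the equivalent case: by Lemma~\ref{comm IV} equivalent factor systems produce literally the same cochain $\mathcal{C}_M\Psi$, hence the same class. It therefore remains to compare two arbitrary factor systems $(M_\pi,\Psi_{\pi,\rho})$ and $(M_\pi,\Psi'_{\pi,\rho})$ for the same $\varphi$. By Lemma~\ref{comm III} there is a cocycle $\omega\in Z^2(\widehat{G},C(X,\mathbb{T}))$ with $\mathcal{C}_{M'}\Psi'=\lambda_\omega\cdot\mathcal{C}_M\Psi$ at the level of $2$-cocycles, so in cohomology $[\mathcal{C}_{M'}\Psi']=[\lambda_\omega]+[\mathcal{C}_M\Psi]$. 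Since $\pr_{\text{ab}}$ is a group homomorphism, the whole statement collapses to
\[
	\pr_{\text{ab}}\bigl([\lambda_\omega]\bigr)=0 .
\]
Here $\lambda_\omega=\lambda([\omega])$ is the antisymmetrisation of $\omega$, i.e.\ an alternating bicharacter, which is itself a $2$-cocycle.

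The next step is to exhibit $[\lambda_\omega]$ as a flip-difference. Writing $\omega^{\mathrm{op}}(\pi,\rho):=\omega(\rho,\pi)$ for the flipped cocycle, we have $\lambda_\omega=\omega\cdot(\omega^{\mathrm{op}})^{-1}$ and hence $[\lambda_\omega]=[\omega]-[\omega^{\mathrm{op}}]$ in $H^2(\widehat{G},C(X,\mathbb{T}))$. The flip $[\mu]\mapsto[\mu^{\mathrm{op}}]$ is an involution of $H^2$; a direct check gives $\lambda\circ\mathrm{fl}=-\lambda$, so the flip preserves $H^2_{\text{ab}}=\ker\lambda$, and it acts as the identity on $H^2_{\text{ab}}$ because every abelian extension is represented by a symmetric cocycle $\mu=\mu^{\mathrm{op}}$. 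Consequently, if one decomposes $[\omega]=a+c$ with $a\in H^2_{\text{ab}}$ and $c$ in the complement $\ker(\pr_{\text{ab}})$ furnished by the splitting of \cite[Proposition~II.3]{Ne07}, then $[\omega^{\mathrm{op}}]=a+\mathrm{fl}(c)$, and therefore $\pr_{\text{ab}}([\lambda_\omega])=-\pr_{\text{ab}}(\mathrm{fl}(c))$. Thus it suffices to know that this complement is stable under the flip; granting flip-invariance, $\mathrm{fl}(c)$ again lies in $\ker(\pr_{\text{ab}})$ and the right-hand side vanishes.

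Combining the two reductions yields $\pr_{\text{ab}}([\mathcal{C}_{M'}\Psi'])=\pr_{\text{ab}}([\mathcal{C}_M\Psi])$ for any two factor systems for $\varphi$, so the class $\pr_{\text{ab}}([\mathcal{C}_M\Psi])$ depends neither on the choice of factor system nor, by Lemma~\ref{comm IV}, on the representative within an equivalence class, which is exactly the asserted invariance of $\Ext(C(X),G,\varphi)$.

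I expect the flip-invariance of the complement to be the only genuinely delicate point. The difficulty is that $\lambda$ restricts to the \emph{squaring} map $\beta\mapsto\beta^2$ on classes of alternating bicharacters, so the naive inclusion $\Alt^2(\widehat{G},C(X,\mathbb{T}))\hookrightarrow H^2$ is \emph{not} a section of $\lambda$, and one cannot simply declare $[\lambda_\omega]$ to lie in the image of the splitting. An equivalent and perhaps cleaner way to phrase the obstacle is the identity $j(\beta)=s(\beta^2)$, where $j$ is the map sending an alternating bicharacter to its cohomology class and $s$ is the splitting section; this reconciles the squaring with the section and is precisely what has to be extracted from \cite[Proposition~II.3]{Ne07}. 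Once that compatibility is in hand the argument above goes through verbatim.
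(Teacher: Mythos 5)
Your reduction coincides with the paper's proof: by Lemma~\ref{comm III} one has $[\mathcal{C}_{M'}\Psi']=[\lambda_{\omega}][\mathcal{C}_{M}\Psi]$, and since $\pr_{\text{ab}}$ is a homomorphism everything rests on the triviality of $\pr_{\text{ab}}([\lambda_{\omega}])$. The paper settles this in three equalities, citing only ``the construction of the map $\pr_{\text{ab}}$''; you instead supply an actual mechanism (the flip involution) and, more importantly, you correctly observe that the vanishing is \emph{not} a formal consequence of the mere existence of a splitting: since $\lambda$ acts on classes of alternating bicharacters by squaring, the assignment $j:\Alt^2(\widehat{G},C(X,\mathbb{T}))\to H^2(\widehat{G},C(X,\mathbb{T}))$ is not a section, and for a modified section $s'=s+\delta$ with $\delta:\Alt^2(\widehat{G},C(X,\mathbb{T}))\to H^2_{\text{ab}}(\widehat{G},C(X,\mathbb{T}))$ the projection changes by $\pr'_{\text{ab}}(j(\beta))=\pr_{\text{ab}}(j(\beta))-\delta(\beta^2)$, which need not vanish; so the proposition, and the well-definedness of $\chi_2(\varphi)$, tacitly presuppose a compatible choice of the section in \cite[Proposition II.3]{Ne07}, exactly as you say. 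Your two formulations of the missing point are indeed equivalent: flip-stability of $\im(s)$ gives $\mathrm{fl}(c)=-c$ on $\im(s)$, and feeding a not necessarily alternating $\omega$ with $\lambda_{\omega}=\beta$ (such $\omega$ exists by surjectivity of $\lambda$) into your decomposition yields $j(\beta)=[\lambda_{\omega}]=2c=s(\beta^2)$, and conversely $j=s\circ(\beta\mapsto\beta^2)$ forces $\mathrm{fl}(s(\beta))=s(-\beta)$. The deferred compatibility is genuine but fillable: for the standard section, defined when $\widehat{G}=\bigoplus_i C_i$ is a direct sum of cyclic groups by $s(\beta)(x,y):=\prod_{i<j}\beta(x_i,y_j)$ (alternating forms vanish on each diagonal block, so $\beta(x,y)=\prod_{i<j}\beta(x_i,y_j)\beta(x_j,y_i)$), the symmetric discrepancy $\bigl(\beta\cdot s(\beta^2)^{-1}\bigr)(x,y)=\prod_{i<j}\beta(x_j,y_i)\beta(y_j,x_i)$ equals $h(x)^{-1}h(y)^{-1}h(x+y)$ for $h(x):=\prod_{i<j}\beta(x_j,x_i)$ and is therefore a coboundary, giving $j(\beta)=s(\beta^2)$ and closing your argument. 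In sum, your proposal is correct modulo this one explicitly flagged property of the splitting, follows the paper's reduction, and is sharper than the paper's own proof at precisely the step the paper leaves implicit.
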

\begin{proof}
	Let $(M_\pi, \Psi_{\pi, \rho})_{\pi, \rho\in\widehat{G}}$ and $(M'_\pi, \Psi'_{\pi, \rho})_{\pi, \rho\in\widehat{G}}$ be two factor systems for the map $\varphi$. Then it follows from Lemma \ref{comm III} and the construction of the map $\pr_{\text{ab}}$ that
	\begin{align*}
		\pr_{\text{ab}}([\mathcal{C}_{M'}\Psi'])
		&=\pr_{\text{ab}}([\lambda_{\omega}\mathcal{C}_M\Psi])
		=\pr_{\text{ab}}([\lambda_{\omega}][\mathcal{C}_M\Psi])
		=\pr_{\text{ab}}([\mathcal{C}_M\Psi]).
		\qedhere
	\end{align*}
\end{proof}

\begin{defn}\label{comm VI}
Let $\varphi:\widehat{G}\rightarrow\Pic(X)$ be a group homomorphism. Then we call
\begin{align*}
\chi_2(\varphi):=\pr_{\text{ab}}([\mathcal{C}_M\Psi])\in H^2_{\text{ab}}(\widehat{G},C(X,\mathbb{T}))
\end{align*}
the \emph{secondary characteristic class} of $\varphi$.
\end{defn}

The next result provides a group theoretic criterion for the existence of topological principal $G$-bundle over $X$.

\begin{thm}\label{comm VII}
Let $G$ be a compact Abelian group and $X$ a compact space. Furthermore, let $\varphi:\widehat{G}\rightarrow\Pic(X)$ be a group homomorphism. Then the following statements are equivalent:
\begin{itemize}
\item[\emph{(}a\emph{)}]
The set $\Ext_{\emph{\text{top}}}(X,G,\varphi)$ is non-empty, that is,
there exists a topological principal bundle $\sigma:P\times G\rightarrow P$ over $X$ representing an element of $\Ext_{\emph{\text{top}}}(X,G,\varphi)$.
\item[\emph{(}b\emph{)}]
The map $\varphi$ satisfies the following two conditions in the indicated order:
\begin{itemize}
\item[\emph{(}$b_1$\emph{)}]
The class $\chi(\varphi)\in H^3(\widehat{G},C(X,\mathbb{T}))$ vanishes.
\item[\emph{(}$b_2$\emph{)}]
Furthermore, the class $\chi_2(\varphi)\in H^2_{\emph{ab}}(\widehat{G},C(X,\mathbb{T}))$ vanishes.
\end{itemize}
\end{itemize}
\end{thm}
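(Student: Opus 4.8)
The plan is to translate the entire statement into the language of factor systems and then recognise the two conditions as the obstructions to, respectively, associativity and commutativity of the resulting algebra. Throughout, $\aB = C(X)$, so the coefficient group is $UZ(\aB) = C(X,\mathbb{T})$ and, as already observed before Definition~\ref{comm VI}, the induced $\widehat{G}$\ndash module structure is trivial; hence the relevant cohomology is ordinary group cohomology, matching the untwisted groups $H^3(\widehat{G},C(X,\mathbb{T}))$ and $H^2_{\text{ab}}(\widehat{G},C(X,\mathbb{T}))$ in the statement. The key dictionary entry is that, by Gelfand duality together with Theorem~\ref{sat=free/comm}, an element of $\Ext(C(X),G,\varphi)$ lies in $\Ext_{\text{top}}(X,G,\varphi)$ if and only if its total algebra $\aA$ is commutative. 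Presenting the system via a factor system $(M_\pi,\Psi_{\pi,\rho})$, the cochain $\mathcal{C}_M\Psi$ of Lemma~\ref{comm II} measures exactly the defect of commutativity, namely $yx = \mathcal{C}_M\Psi(\pi,\rho)\cdot xy$ for $x\in M_\pi$, $y\in M_\rho$; so $\aA$ is commutative precisely when $\mathcal{C}_M\Psi \equiv 1$. I also record that $\mathcal{C}_M\Psi$ vanishes \emph{strictly} on the diagonal: since each $M_\pi$ is the section module of a line bundle, the flip $\text{fl}_{\pi,\pi}$ on $M_\pi \widehat{\otimes}_{C(X)} M_\pi$ is the identity, whence $\mathcal{C}_M\Psi(\pi,\pi) = 1$.

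For the implication (a)$\Rightarrow$(b) I would start from a topological principal bundle $\sigma$ representing a class in $\Ext_{\text{top}}(X,G,\varphi)$. Then $(C(P),G,\alpha_\sigma)$ is a free, \emph{commutative} system with fixed point algebra $C(X)$ and invariant $\varphi$, and it supplies a factor system for $\varphi$. Its mere existence forces $\chi(\varphi) = 0$ by Theorem~\ref{classsatactgrpcomm}, which is $(b_1)$ and makes $\chi_2(\varphi)$ well defined. Commutativity of $C(P)$ gives $\mathcal{C}_M m \equiv 1$, so $[\mathcal{C}_M m] = 0$ and therefore $\chi_2(\varphi) = \pr_{\text{ab}}([\mathcal{C}_M m]) = 0$, which is $(b_2)$.

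For the converse (b)$\Rightarrow$(a) I would use $(b_1)$ and Theorem~\ref{classsatactgrpcomm} to produce a factor system $(M_\pi,\Psi_{\pi,\rho})$ for $\varphi$, with commutator cocycle $c_0 := \mathcal{C}_M\Psi \in Z^2(\widehat{G},C(X,\mathbb{T}))$. By Theorem~\ref{sat and cocycles} the systems in $\Ext(C(X),G,\varphi)$ are obtained by twisting $\Psi$ with $2$\ndash cocycles $\omega$, and by Lemma~\ref{comm III} such a twist multiplies the commutator by $\lambda_\omega$. Since $\lambda$ maps onto $\Alt^2(\widehat{G},C(X,\mathbb{T}))$, the commutator cocycles realised inside $\Ext(C(X),G,\varphi)$ form precisely the coset $c_0 \cdot \Alt^2$; thus a commutative representative exists if and only if $c_0 \in \Alt^2$. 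Granting this, I choose $\omega$ so that $\lambda_\omega$ cancels $c_0$, pass to the factor system $(M_\pi,\omega\Psi)$, whose associated algebra $\aA_{(M,\omega\Psi)}$ is then commutative, and invoke Gelfand duality and Theorem~\ref{sat=free/comm} to obtain a genuine free $G$\ndash space, \ie a topological principal bundle in $\Ext_{\text{top}}(X,G,\varphi)$.

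The heart of the argument — and the step I expect to be the main obstacle — is the equivalence $c_0 \in \Alt^2 \Leftrightarrow \chi_2(\varphi) = 0$. Here one must pass from the cohomological datum $\pr_{\text{ab}}([c_0]) \in H^2_{\text{ab}}$ back to the cocycle\ndash level statement that $c_0$ is genuinely biadditive, which is what strict commutativity requires. The right tool is the split exact sequence $0 \to H^2_{\text{ab}} \to H^2 \xrightarrow{\lambda} \Alt^2 \to 0$ of \cite[Proposition~II.3]{Ne07}: because $c_0$ is antisymmetric and, crucially, vanishes \emph{strictly} on the diagonal, its class lies in the image of the splitting exactly when its $H^2_{\text{ab}}$\ndash component vanishes, and for such diagonal\ndash vanishing classes the chosen splitting is compatible with honest alternating representatives, so that $\pr_{\text{ab}}([c_0]) = 0$ upgrades to $c_0 \in \Alt^2$. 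Making this compatibility precise — together with the independence of the invariant already supplied by Proposition~\ref{comm V} — is the only delicate point; the remaining steps are the routine cocycle manipulations packaged in Lemma~\ref{comm II} and Lemma~\ref{comm III}.
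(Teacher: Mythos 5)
Your proposal reproduces the paper's proof essentially step for step: the forward direction obtains $(b_1)$ from Theorem~\ref{classsatactgrpcomm} and $(b_2)$ from the vanishing of $\mathcal{C}_M\Psi$ for the canonical (commutative) factor system of $(C(P),G,\alpha_\sigma)$, and the converse twists a factor system supplied by $(b_1)$ with a cocycle $\omega$ satisfying $\lambda_{\omega^*}=\mathcal{C}_M\Psi$ so that $\mathcal{C}_M(\omega\Psi)=\one_{C(X)}$, whence the associated system comes from a topological principal bundle. The ``delicate point'' you isolate --- upgrading $\pr_{\text{ab}}([\mathcal{C}_M\Psi])=0$ to the strict cochain-level membership $\mathcal{C}_M\Psi\in\Alt^2(\widehat{G},C(X,\mathbb{T}))$ via antisymmetry and the diagonal vanishing $\mathcal{C}_M\Psi(\pi,\pi)=1$ --- is exactly the step the paper passes over in a single line, so your treatment is, if anything, more explicit than the published argument.
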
 
\begin{proof}
Suppose first that the set $\Ext_{\text{top}}(X,G,\varphi)$ is non-empty. Then Theorem~\ref{classsatactgrpcomm} implies that the characteristic class $\chi(\varphi)$ in $H^3(\widehat{G},C(X,\mathbb{T}))$ vanishes. To verify that the class $\chi_2(\varphi)\in H^2_{\text{ab}}(\widehat{G},C(X,\mathbb{T}))$ vanishes, let $\sigma:P\times G\rightarrow P$ be a topological principal bundle over $X$ representing an element of $\Ext_{\text{top}}(X,G,\varphi)$. Then the canonical factor system of the associated free C\Star dynamical system $(C(P),G,\alpha_{\sigma})$ (cf. Theorem \ref{sat=free/comm}) is given by
\begin{align*}
\Psi_{\pi, \rho}:C(P)(\pi)\widehat{\otimes}_{C(X)}C(P)(\rho)\rightarrow C(P)(\pi+\rho), \quad f\otimes_{C(X)}g\mapsto fg.
\end{align*}
Therefore, the claim follows from the commutativity of $C(P)$ since we have 
\begin{align*}
\Psi_{\pi, \rho}(f\otimes_{C(X)}g)=fg=gf=\Psi_{\rho,\pi}(g\otimes_{C(X)}f)
\end{align*}
for all $f\in C(P)(\pi)$ and $g\in C(P)(\rho)$, \ie, $\mathcal{C}_M\Psi=\one_{C(X)}$.

If, conversely, condition $(b_1)$ is satisfied, then it follows from Theorem \ref{classsatactgrpcomm} that there is a free C\Star dynamical system $(\aA,G,\alpha)$ representing an element of $\Ext(C(X),G,\varphi)$. Let $(M_\pi, \Psi_{\pi, \rho})_{\pi, \rho\in\widehat{G}}$ be its associated factor system. We then use condition $(b_2)$ to find an element $\omega\in Z^2(\widehat{G},C(X,\mathbb{T}))$ such that 
\begin{align*}
\lambda_{\omega^*}=\mathcal{C}_M\Psi\in\Alt^2(\widehat{G},C(X,\mathbb{T})).
\end{align*}
Consequently, the corresponding factor system $(M_\pi, \omega(\pi,\rho)\Psi_{\pi, \rho})_{\pi, \rho\in\widehat{G}}$ has the property $\mathcal{C}_M(\omega\Psi)=\one_{C(X)}$, that is, its associated free C\Star dynamical system is equivalent to one of the form $(C(P),G,\alpha_{\sigma})$ induced by some topological principal bundle $\sigma:P\times G\rightarrow P$ over~$X$.
\end{proof}

The following statement provides a classification of topological principal $G$-bundles over~$X$. It is a consequence of Corollary \ref{sat and cocycles cor} and Lemma \ref{comm III}. 

\begin{cor}\label{class top principal bundles}
    Let $G$ be a compact Abelian group and $X$ a compact space. Furthermore, let $\varphi:\widehat{G}\rightarrow\Pic(X)$ be a group homomorphism with $\Ext_{\emph{\text{top}}}(X,G,\varphi)\neq\emptyset$. Then the map
	\begin{align*}
	H^2_{\emph{\text{ab}}}(\widehat{G},C(X,\mathbb{T}))\times\Ext_{\emph{\text{top}}}(X,G,\varphi)&\rightarrow\Ext_{\emph{\text{top}}}(X,G,\varphi), \quad \Bigr(\bigr[\omega\bigl],\bigr[P_{(M,\Psi)}\bigl]\Bigl) \mapsto \bigr[P_{(M,\omega\Psi)}\bigl]
	\end{align*}
	is a well-defined simply transitive action, where $P_{(M,\Psi)}$ denotes the topological principal bundle associated to a given factor system $(M,\Psi):=(M_\pi, \Psi_{\pi, \rho})_{\pi, \rho\in\widehat{G}}$. 
\end{cor}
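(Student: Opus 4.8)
The plan is to realise $\Ext_{\text{top}}(X,G,\varphi)$ as a single orbit for the restriction of the simply transitive action already constructed in Corollary~\ref{sat and cocycles cor}, and then to deduce the three assertions (well-definedness, transitivity, freeness) almost formally. Concretely, I would first specialise Corollary~\ref{sat and cocycles cor} to $\aB = C(X)$. Here the centre is all of $C(X)$, so $UZ(C(X)) = C(X,\mathbb{T})$, and since the left and right $C(X)$-actions on each line bundle $M_\pi$ commute, the induced $\widehat{G}$-module structure on $C(X,\mathbb{T})$ is trivial (\cf the discussion preceding Definition~\ref{comm VI}). Thus $H^2_\varphi(\widehat{G},C(X,\mathbb{T})) = H^2(\widehat{G},C(X,\mathbb{T}))$, and Corollary~\ref{sat and cocycles cor} gives a simply transitive action of $H^2(\widehat{G},C(X,\mathbb{T}))$ on the ambient set $\Ext(C(X),G,\varphi)$. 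The task then reduces to proving that the subgroup $H^2_{\text{ab}}(\widehat{G},C(X,\mathbb{T}))$ acts simply transitively on the subset $\Ext_{\text{top}}(X,G,\varphi)$.

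The conceptual core is to read $\mathcal{C}_M\Psi$ as an invariant that cuts out $\Ext_{\text{top}}$. By Lemma~\ref{comm IV} together with Theorem~\ref{fac sys equ}, the cochain $\mathcal{C}_M\Psi$ depends only on the equivalence class of the associated dynamical system, so $\mathcal{C}$ defines a map from $\Ext(C(X),G,\varphi)$ to antisymmetric $2$-cocycles. I would then establish that a class lies in $\Ext_{\text{top}}(X,G,\varphi)$ precisely when $\mathcal{C}_M\Psi = \one_{C(X)}$. Indeed, the condition $\mathcal{C}_M\Psi = \one$ unwinds to $\Psi_{\rho,\pi}\circ\text{fl}_{\pi,\rho} = \Psi_{\pi,\rho}$, i.e. to $m_{\pi,\rho}(x,y) = m_{\rho,\pi}(y,x)$, hence to commutativity of $\aA_{(M,\Psi)}$; by Gelfand duality and Theorem~\ref{sat=free/comm} such a system comes from a free action on a compact space, and conversely commutativity of $C(P)$ forces $\mathcal{C}_M\Psi = \one$ exactly as computed in the proof of Theorem~\ref{comm VII}. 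This identification $\Ext_{\text{top}}(X,G,\varphi) = \mathcal{C}^{-1}(\one)$ is the step I expect to require the most care, since it demands that $\mathcal{C}_M\Psi$ be pinned down as a genuine class invariant and that its triviality be matched exactly with commutativity.

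Finally I would combine this with the transformation law underlying Lemma~\ref{comm III}: replacing $\Psi$ by $\omega\Psi$ multiplies $\mathcal{C}_M\Psi$ by $\lambda_\omega^{\pm 1}$, where $\lambda_\omega(\pi,\rho) = \omega(\pi,\rho)\,\omega(\rho,\pi)^*$ depends only on $[\omega]$. Hence, under the simply transitive $H^2$-action, two classes share the same value of $\mathcal{C}$ if and only if they differ by an element of $\ker\lambda = H^2_{\text{ab}}(\widehat{G},C(X,\mathbb{T}))$, the last equality being the content of the split exact sequence recalled from \cite{Ne07}. Consequently the non-empty fibre $\mathcal{C}^{-1}(\one) = \Ext_{\text{top}}(X,G,\varphi)$ is exactly one $H^2_{\text{ab}}$-orbit: the action preserves $\Ext_{\text{top}}$ and is well defined because $\lambda_\omega = \one$ for $[\omega]\in H^2_{\text{ab}}$ yields $\mathcal{C}_M(\omega\Psi) = \one$; it is transitive because any two topological classes differ by some $[\omega]$ with $\lambda_\omega = \one$; and it is free because the ambient action is. Together these give the asserted simply transitive action.
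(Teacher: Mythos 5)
Your proposal is correct and follows essentially the same route as the paper, which derives the corollary precisely by combining the simply transitive action of Corollary~\ref{sat and cocycles cor} with the transformation law of Lemma~\ref{comm III}; the identification of $\Ext_{\text{top}}(X,G,\varphi)$ with the classes having trivial invariant $\mathcal{C}_M\Psi$ (via commutativity, Gelfand duality and Theorem~\ref{sat=free/comm}) is exactly the mechanism already established in the proof of Theorem~\ref{comm VII}, and the invariance of $\mathcal{C}_M\Psi$ under equivalence is Lemma~\ref{comm IV} together with Theorem~\ref{fac sys equ}. You merely make explicit the details the paper leaves implicit, so there is nothing to object to.
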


We conclude this section with a few words on how our previous results relates to the \v Cech cohomology for the pair $(X,G)$ classifying locally trivial principal $G$-bundles over~$X$. 
\begin{rmk}
	It a well-known fact that locally trivial principal $G$-bundles over $X$ are up to equivalence (\cf Definition~\ref{def: top bundles}) classified by the \v Cech cohomology group $\check H^1(X,G)$. It follows that for a compact Abelian group $G$ we have a canonical injection
	\begin{align*}
		\check H^1(X,G) \hookrightarrow \Ext_{\text{top}}(X,G):=\dot{\bigcup_{\varphi}}\Ext_{\text{top}}(X,G,\varphi),
	\end{align*}
	where the disjoint union is taken over all group homomorphisms $\varphi:\widehat G \to \Pic(X)$. Unfortunately, we do not know yet whether the action of $H^2_{\text{ab}} ( \widehat{G},C(X,\mathbb{T}))$ of Corollary~\ref{class top principal bundles} can be pulled back to an action on $\check{H}^1(X,G)$. 
\end{rmk}

\section{Examples}
\label{sec:examples}

In the last section of this paper we present some examples.

\begin{expl}
Let $G$ be a compact Abelian group. Furthermore, let $\mathbb{M}_m(\mathbb{C})$ be the C\Star algebra of $m\times m$ matrices and recall that its natural representation on $\mathbb{C}^m$ is, up to equivalence, the only irreducible representation of $\mathbb{M}_m(\mathbb{C})$. Therefore, it follows from Example \ref{expl:pic_grp} that $\Pic(\mathbb{M}_m(\mathbb{C}))$ is trivial. In particular, there is only the trivial group homomorphism from $\widehat{G}$ to $\Pic(\mathbb{M}_m(\mathbb{C}))$ and a realization is given by the free C\Star dynamical system $$(C(G,\mathbb{M}_m(\mathbb{C})),G,\text{rt}\otimes\id_{\mathbb{M}_n(\mathbb{C})}),$$ where
\begin{align*}
\text{rt}:G\times C(G)\rightarrow C(G), \quad \text{rt}(g,f)(h):=f(hg)
\end{align*}
denotes the right-translation action by $G$. Moreover, we conclude from Corollary \ref{sat and cocycles cor} that all free actions of $G$ with fixed point algebra $\mathbb{M}_m(\mathbb{C})$ are parametrized by the cohomology group $H^2(\widehat{G},\mathbb{T})$. In the case $G=\mathbb{T}^n$, $n\in\mathbb{N}$, this cohomology group is isomorphic to $\mathbb{T}^{\frac{1}{2}n(n-1)}$ and parametrizes the free actions given by tensor products of the noncommutative $n$-tori endowed with their natural $\mathbb{T}^n$-action (\cf Example \ref{expl sat act a}) and the C\Star algebra $\mathbb{M}_m(\mathbb{C})$.
\end{expl}

\begin{expl}
	Consider the 2-fold direct sum $\mathbb{M}_2(\mathbb{C})\oplus\mathbb{M}_2(\mathbb{C})$ and notice that the group $UZ(\mathbb{M}_2(\mathbb{C})\oplus\mathbb{M}_2(\mathbb{C}))$ is isomorphic to $\mathbb{T}^2$. Since the spectrum of $\mathbb{M}_2(\mathbb{C})\oplus\mathbb{M}_2(\mathbb{C})$ contains two elements, it follows from Example \ref{expl:pic_grp} that $\Pic(\mathbb{M}_2(\mathbb{C})\oplus\mathbb{M}_2(\mathbb{C}))$ is isomorphic to~$\mathbb{Z}_2$.  If $\varphi:\mathbb{Z}\rightarrow\mathbb{Z}_2$ denote the canonical group homomorphism with kernel $2\mathbb{Z}$, then it is a consequence of \cite[Chapter VI.6]{Ma95} that the cohomology groups $H^2_{\varphi}(\mathbb{Z},\mathbb{T}^2)$ and $H^3_{\varphi}(\mathbb{Z},\mathbb{T}^2)$ are trivial. Therefore, Theorem \ref{classsatactgrpcomm} implies that the set $\Ext(\mathbb{M}_2(\mathbb{C})\oplus\mathbb{M}_2(\mathbb{C}),\mathbb{T},\varphi)$ is non-empty and contains according to Corollary \ref{sat and cocycles cor} exactly one element, namely the class of the trivial system $$(C(\mathbb{T},\mathbb{M}_2(\mathbb{C})\oplus\mathbb{M}_2(\mathbb{C}),\mathbb{T},\text{rt}\otimes\id).$$
\end{expl}

\begin{expl}\label{cool quantum torus example}
For the following discussion we recall the notation from Example \ref{expl sat act a}. Let $\mathbb{T}^n_{\theta}$ be the noncommutative $n$-torus defined by the real skew-symmetric $n\times n$ matrix~$\theta$ and let $\omega_1$ be the corresponding $\mathbb{T}$-valued 2-cocycle on $\mathbb{Z}^n$ given for ${\mathbf k},{\mathbf k'}\in\mathbb{Z}^n$ by 
\begin{align*}
\omega_1({\mathbf k},{\mathbf k'}):=\exp({}_{\mathbb{C}^n}\langle\theta{\mathbf k},{\mathbf k'}\rangle).
\end{align*}
Furthermore, let $S:\mathbb{Z}^m\rightarrow\Aut(\mathbb{T}^n_{\theta})$ be a group homomorphism leaving the isotypic components of $\mathbb{T}^n_{\theta}$ (with respect to the canonical gauge action by $\mathbb{T}^n$) invariant, \ie, such that for all $ {\mathbf l}\in\mathbb{Z}^m$ and ${\mathbf k}\in\mathbb{Z}^n$
\begin{align*}
S({\mathbf l})U_{\mathbf k}=c_{{\mathbf l},{\mathbf k}}U_{\mathbf k}
\end{align*}
for some $c_{{\mathbf l},{\mathbf k}}\in\mathbb{T}$. Then, given another $\mathbb{T}$-valued 2-cocycle $\omega_2$ on $\mathbb{Z}^m$, it follows from Remark \ref{factor systems TNCPB}, that the pair $(S,\omega_2)$ gives rise to a factor system for the group homomorphism $\varphi:=\pr_{\mathbb{T}^n_{\theta}}\circ S:\mathbb{Z}^m\rightarrow\Pic(\mathbb{T}^n_{\theta})$. Moreover, it is easily seen that the associated free C\Star dynamical system is equivalent to the free C\Star dynamical system $(\mathbb{T}^{n+m}_{\theta'},\mathbb{T}^m,\alpha)$, where $\mathbb{T}^{n+m}_{\theta'}$ denotes the noncommutative $(n+m)$-torus determined by the $\mathbb{T}$-valued 2-cocycle on $\mathbb{Z}^{n+m}$ given for ${\mathbf k},{\mathbf k'}\in\mathbb{Z}^n$ and ${\mathbf l},{\mathbf l'}\in\mathbb{Z}^m$ by
\begin{align*}
\omega\bigr(({\mathbf k},{\mathbf l}),({\mathbf k'},{\mathbf l'})\bigl):=c_{{\mathbf l},{\mathbf k'}}\omega_1({\mathbf k},{\mathbf k'})\omega_2({\mathbf l},{\mathbf l'})
\end{align*}
and $\alpha$ is the restriction of the gauge action $\alpha^{n+m}_{\theta'}$ to the closed subgroup $\mathbb{T}^m$ of $\mathbb{T}^{n+m}$. That $(\mathbb{T}^{n+m}_{\theta'},\mathbb{T}^m,\alpha)$ is actually free is a consequence of Proposition \ref{restricted free}. In particular, it represents an element in $\Ext(\mathbb{T}^n_{\theta},\mathbb{T}^m,\varphi)$ which is, according to Corollary \ref{sat and cocycles cor}, parametrized by the cohomology group 
\begin{align*}
H^2_\varphi(\mathbb{Z}^m,UZ(\mathbb{T}^n_{\theta})).
\end{align*}
\end{expl}

\begin{rmk}
The previous example can be used to construct a free C\Star dynamical system with commutative fixed point algebra which is not a C\Star algebraic bundle of C\Star dynamical systems over the spectrum of the fixed point algebra in a canonical way (\cf \cite{ENOO09,ReWi98}). Indeed, if we simply chose $\theta=0$ and a nontrivial group homomorphism $S:\mathbb{Z}^m\rightarrow\Aut(C(\mathbb{T}^n))$, then it is easily seen that the fixed point algebra $C(\mathbb{T}^n)$ is not contained in the center of $\mathbb{T}^{n+m}_{\theta'}$. 
\end{rmk}

\begin{expl}
Let $\theta$ be an irrational number in $[0,1]$ and $\mathbb{T}^2_{\theta}$ the corresponding noncommutative $2$-torus from Example \ref{expl sat act a}. We recall that in this case $UZ(\mathbb{T}^2_{\theta})$ is isomorphic to $\mathbb{T}$. Furthermore, let $\varphi:\mathbb{Z}^2\rightarrow\Pic(\mathbb{T}^2_{\theta})$ be any group homomorphism (note that $\mathbb{T}^2\subseteq\Aut(\mathbb{T}^2_{\theta})$ to apply the construction in Example \ref{cool quantum torus example}). Then it is a consequence of \cite[Chapter VI.6]{Ma95} that the cohomology group $H^3_{\varphi}(\mathbb{Z}^2,\mathbb{T})$ is trivial. Therefore, Theorem~\ref{classsatactgrpcomm} implies that the set $\Ext(\mathbb{T}^2_{\theta},\mathbb{T}^2,\varphi)$ is non-empty and, according to Corollary~\ref{sat and cocycles cor}, parametrized by the cohomology group $H^2_{\varphi}(\mathbb{Z}^2,\mathbb{T})$. For its computation we refer, for example, to \cite[Proposition 6.2]{Wa11b}.
\end{expl}

\begin{expl}
	Let $H$ be the discrete (three-dimensional) Heisenberg group and let $(C^{*}(H),\mathbb{T}^2,\alpha)$ the corresponding free C\Star dynamical system from Example \ref{expl sat act b}. If 
	\begin{align*}
	\varphi:\mathbb{Z}^2\rightarrow\Pic(C(\mathbb{T}))\cong\Pic(\mathbb{T})\rtimes\Homeo(\mathbb{T})
	\end{align*}
	denotes the associated group homomorphism, then the class of $(C^{*}(H),\mathbb{T}^2,\alpha)$ is contained in the set $\Ext(C(\mathbb{T}),\mathbb{T}^2,\varphi)$ of equivalence classes of realizations of $\varphi$, which is by Corollary \ref{sat and cocycles cor} parametrized by the cohomology group $H^2_{\varphi}(\mathbb{Z}^2,C(\mathbb{T},\mathbb{T}))$. For its computation we refer, again, to \cite[Proposition 6.2]{Wa11b}.
\end{expl}

\begin{expl}
For $q\in[-1,1]$ let $(\SU_q(2),\mathbb{T},\alpha)$ be the quantum Hopf fibration from Example \ref{expl sat act c} and $L_q(1)$ the isotypic component corresponding to $1\in\mathbb{Z}$. If 
\begin{align*}
\varphi:\mathbb{Z}\rightarrow\Pic(S_q(2)), \quad 1\mapsto[L_q(1)]
\end{align*}
denotes the associated group homomorphism, then the class of $(\SU_q(2),\mathbb{T},\alpha)$ is contained in the set $\Ext(S_q(2),\mathbb{T},\varphi)$ of equivalence classes of realizations of $\varphi$, which is by Corollary \ref{sat and cocycles cor} parametrized by the cohomology group $H^2_{\varphi}(\mathbb{Z},UZ(S_q(2))$. It follows, for example, from \cite[Chapter IV.6]{Ma95} that this cohomology group is trivial, \ie, up to isomorphism the quantum Hopf fibration $(\SU_q(2),\mathbb{T},\alpha)$ is the unique realization of the group homomorphism $\varphi$.
\end{expl}

\section*{Acknowledgement}

We thank the Danish National Research Foundation through the Center for Symmetry and Deformation (DNRF92). We also thank the Academy of Finland through the Center of Excellence in Analysis and Dynamics Research and the Research Grant 1138810. Furthermore, we thank T. Crisp, E. Meir and R. Nest for stimulating discussions on the subject matter of this paper. 



\end{document}